\theoremstyle{plain}
\newtheorem{thm}{Theorem}[section]
\newtheorem{lemm}[thm]{Lemma}
\newtheorem{prop}[thm]{Proposition}
\theoremstyle{definition}
\newtheorem{rem}[thm]{Remark}
\DeclareFontFamily{U}{matha}{\hyphenchar\font45}
\DeclareFontShape{U}{matha}{m}{n}{
      <5> <6> <7> <8> <9> <10> gen * matha
      <10.95> matha10 <12> <14.4> <17.28> <20.74> <24.88> matha12
      }{}
\DeclareSymbolFont{matha}{U}{matha}{m}{n}
\DeclareFontFamily{U}{mathx}{\hyphenchar\font45}
\DeclareFontShape{U}{mathx}{m}{n}{
      <5> <6> <7> <8> <9> <10>
      <10.95> <12> <14.4> <17.28> <20.74> <24.88>
      mathx10
      }{}
\DeclareSymbolFont{mathx}{U}{mathx}{m}{n}
\DeclareMathDelimiter{\vvvert}{0}{matha}{"7E}{mathx}{"17}
\renewcommand{\div}{\operatorname{div}}
\newcommand{\dB}{\dot{B}}
\newcommand{\supp}{\operatorname{supp}}
\renewcommand{\leq}{\leqslant}
\renewcommand{\geq}{\geqslant}
\newcommand{\up}{u_{\rm per}}
\newcommand{\vp}{v_{\rm per}}
\newcommand{\n}[1]{{\left\|#1\right\|}}
\newcommand{\N}[1]{\left\vvvert #1 \right\vvvert}
\newcommand{\lp}[1]{\left[#1\right]}
\renewcommand{\mp}[1]{\left\{#1\right\}}
\renewcommand{\sp}[1]{\left(#1\right)}
\begin{document}
\title[Time-periodic Navier--Stokes flows in $\mathbb{R}^n$ with $n \geq 2$]
{Time-periodic solutions to \\ the Navier--Stokes equations on the whole space \\ including the two-dimensional case}
\author[M.~Fujii]{Mikihiro Fujii}
\address[M.~Fujii]{Institute of Mathematics for Industry, Kyushu University, Fukuoka 819--0395, Japan}
\email[M.~Fujii]{fujii.mikihiro.096@m.kyushu-u.ac.jp}
\keywords{time-periodic solutions, the incompressible Navier--Stokes equations, scaling critical spaces, Besov spaces, Chemin--Lerner spaces}
\subjclass[2020]{35Q30, 76D05, 35B10}
\begin{abstract}
Let us consider the incompressible Navier--Stokes equations with the time-periodic external forces in the whole space $\mathbb{R}^n$ with $n\geq 2$ and investigate the existence and non-existence of time-periodic solutions. 
In the higher dimensional case $n \geq 3$, we construct a unique small solution for given small time-periodic force in the scaling critical spaces of Besov type and prove its stability under small perturbations.
In contrast, for the two-dimensional case $n=2$, the time-periodic solvability of the Navier--Stokes equations has been long standing open.
It is the central work of this paper that we have now succeeded in solving this issue negatively by providing examples of small external forces such that each of them does not generate time-periodic solutions.
\end{abstract}
\maketitle


\section{Introduction}\label{sec:intro}
We consider the incompressible Navier--Stokes equations with time-periodic external forces on the whole space $\mathbb{R}^n$ with $n \geq 2$:
\begin{align}\label{eq:NS_P}
    \begin{cases}
        \partial_t u -\Delta u + (u \cdot \nabla)u + \nabla p = f, \qquad & t \in \mathbb{R},x\in \mathbb{R}^n,\\
        \div u = 0, \qquad & t \in \mathbb{R},x\in \mathbb{R}^n,
    \end{cases}
\end{align}
where $u=u(t,x):\mathbb{R} \times \mathbb{R}^n \to \mathbb{R}^n$ and $p=p(t,x):\mathbb{R} \times \mathbb{R}^n \to \mathbb{R}$ represent the unknown velocity field and pressure of the fluid, respectively, whereas the given external force $f=f(t,x): \mathbb{R} \times \mathbb{R}^n \to \mathbb{R}^n$ is assumed to be $T$-periodic, that is $f(t+T)=f(t)$ holds for all $t \in \mathbb{R}$ {with some positive constant $T$}.
It is well-known that \eqref{eq:NS_P} possesses the scaling invariant structure, that is, if  $u$ and $p$ solve \eqref{eq:NS_P} with some external force $f$, then
\begin{align}\label{scaling-up}
    u_{\lambda}(t,x):=  \lambda  u (\lambda^2 t,\lambda x),\qquad
    p_{\lambda}(t,x):= \lambda^2 p (\lambda^2 t,\lambda x)
\end{align}
also satisfy \eqref{eq:NS_P} with $f$ replaced by
\begin{align}\label{scaling-f}
    f_{\lambda}(t,x):= \lambda^3 f (\lambda^2 t,\lambda x)
\end{align}
for all $\lambda >0$.
Function spaces of which the norms are invariant {under} the scaling transforms \eqref{scaling-up}-\eqref{scaling-f} are called the scaling critical spaces for \eqref{eq:NS_P}.
The purpose of this paper is to consider the solvability of the time-periodic problem \eqref{eq:NS_P} in the scaling critical Besov-type spaces framework.
In the higher dimensional case $n \geq 3$, we prove the unique existence and global in time stability of the $T$-periodic solutions to \eqref{eq:NS_P}. 
For the two-dimensional case $n=2$, it has been well-known as an open problem whether the time-periodic solution for the two-dimensional incompressible Navier--Stokes equations \eqref{eq:NS_P} with $n=2$ exists or not.
The major outcome in this paper is to solve this question negatively and construct some arbitrarily small external forces, each of which does not produce time-periodic solutions.


\subsection{Known results and the position of our study}
We recall known results for the time-periodic problem of the Navier--Stokes equations on unbounded domains.
It was Maremonti \cites{Mar-91-Non,Mar-91-RM} who first constructed a unique time-periodic solution to \eqref{eq:NS_P} on the three-dimensional whole space $\mathbb{R}^3$ and half space $\mathbb{R}^3_+$.
Kozono and Nakao \cite{Koz-Nak-96} introduced the notion of integral equation \eqref{eq:IE_P} below corresponding to \eqref{eq:NS_P} and showed the existence of a unique small time-periodic mild solution to \eqref{eq:NS_P} in the Lebesgue spaces framework on the whole space $\mathbb{R}^n$, the half space $\mathbb{R}^n_+$ with $n \geq 3$, and the exterior domains in $\mathbb{R}^n$ with $n\geq 4$.
Taniuchi \cite{Tan-99} showed the stability of the global solution to the initial value problem of Navier--Stokes equations with the time-periodic external forces around the time-periodic flow constructed in \cite{Koz-Nak-96} in the framework of weak-mild solutions.
Yamazaki \cite{Yam-00-FE} generalized the results in \cites{Koz-Nak-96} to the Morrey spaces frameworks on $\mathbb{R}^n$ with $n \geq 3$.
Yamazaki \cite{Yam-00-MA} considered the Navier--Stokes equations \eqref{eq:NS_P} with external forces that may not decay as $t \to \infty$, which is a similar situation to the time-periodic setting and proved the global existence of solutions in a scaling critical space $u \in BC(\mathbb{R};L^{n,\infty}(\Omega))$ for given small external force $f$ in the scaling critical class $(-\Delta)^{-\frac{1}{2}}f \in BC(\mathbb{R};L^{\frac{n}{2},\infty}(\Omega))$, where $\Omega$ is the whole space $\mathbb{R}^n$, the half space $\mathbb{R}^n_+$, or the exterior domain in $\mathbb{R}^n$ with $n \geq 3$.
Geissert, Hieber, and Nguyen \cite{Gei-Hie-Ngu-16} proposed a new approach on the time-periodic problem in a general framework and applied it to several viscous incompressible fluids on $\mathbb{R}^n$ with $n \geq 3$ and constructed small time-periodic solutions in a scaling critical space $BC(\mathbb{R};L^{n,\infty}(\mathbb{R}^n))$ if the given time-periodic external force $f=\div F$ with the scaling critical class $F \in BC(\mathbb{R}; L^{\frac{n}{2},\infty}(\mathbb{R}^n))$ is sufficiently small.

For the two-dimensional case, the situation is completely different from that for the higher dimensional case, and there are only few results on the existence of time-periodic solutions in the two-dimensional unbounded domains.
As mentioned in \cite{Gal-13}, the solvability of time-periodic problems in two-dimensional unbounded domains has been known to be as difficult as that of stationary problems.
Indeed, the proofs of all results for higher dimensional case mentioned above completely fails in two-dimensional case. 
One of the reasons for this difficulty is that the decay rate of the heat kernels on $\mathbb{R}^2$ is so slow that it is difficult to find a function space $X$ that establishes the key bilinear estimate 
\begin{align}\label{est:imp2}
    \n{
    \int_{-\infty}^t e^{(t-\tau)\Delta} \mathbb{P}\div (u(\tau) \otimes v(\tau))d\tau
    }_{X}
    \leq{}
    C
    \n{u}_{X}
    \n{v}_{X},
\end{align}
although it is known for the higher dimensional case, such as $BC(\mathbb{R};L^{n,\infty}(\mathbb{R}^n))$ with $n \geq 3$ by \cites{Yam-00-MA,Gei-Hie-Ngu-16}.
{On the other hand, there are some results solving the two-dimensional time periodic problem in the whole plane by considering some special situations.}
In \cite{Gal-13}, Galdi constructed time-periodic solutions to \eqref{eq:NS_P} around the constant flow $\widehat{e}_1=(1,0)^{\rm t}$.
Tsuda \cite{Tsu-18} proved the existence of time-periodic solutions to the compressible Navier--Stokes equations with the given small time-periodic external forces satisfying some spatial antisymmetric conditions. 
However, there is no previous research on time-periodic solvability in two-dimensional unbounded domains without special assumptions such as around non-zero constant equilibrium states or spatial antisymmetry. In particular, the two-dimensional analysis corresponding to the results \cites{Mar-91-Non,Mar-91-RM,Koz-Nak-96,Gei-Hie-Ngu-16,Yam-00-MA,Yam-00-FE} {for} the higher dimensional case mentioned above is completely unresolved.

In this paper, 
we address the solvability of the time-periodic problem \eqref{eq:NS_P} 
not only 
in the higher dimensional case $\mathbb{R}^n$ with $n \geq 3$, 
but also 
in the two-dimensional case $\mathbb{R}^2$, 
and aim to reveal the existence or non-existence of time-periodic solutions in the framework of scaling critical function spaces of Besov type.
More precisely, for the higher dimensional case $\mathbb{R}^n$ with $n \geq 3$,  
we prove that for $1 \leq p < n$ and $1 \leq \sigma \leq \infty$, there exists a unique small time-periodic solution $\up \in \widetilde{C}(\mathbb{R};\dB_{p,\sigma}^{\frac{n}{p}-1}(\mathbb{R}^n))$, provided that the given time-periodic external force $f \in \widetilde{C}(\mathbb{R};\dB_{p,\sigma}^{\frac{n}{p}-3}(\mathbb{R}^n))$ is sufficiently small,
where $\widetilde{C}(\mathbb{R};\dB_{p,\sigma}^{s}(\mathbb{R}^n)):= {C}(\mathbb{R};\dB_{p,\sigma}^{s}(\mathbb{R}^n)) \cap \widetilde{L^{\infty}}(\mathbb{R};\dB_{p,\sigma}^{s}(\mathbb{R}^n))$
and $\widetilde{L^r}(\mathbb{R};\dB_{p,\sigma}^{s}(\mathbb{R}^n))$ is the Chemin--Lerner space; 
the definition and basic properties of this function space are mentioned in Section \ref{sec:pre}.
See Remark \ref{rem:thm1} for the reason why we need the Chemin--Lerner spaces.
For the stability of the time-periodic solution $\up$, 
we consider the initial value problem of the incompressible Navier--Stokes equations with the time-periodic external forces:
\begin{align}\label{eq:NS_IVP_v}
    \begin{cases}
        \partial_t v -\Delta v + (v \cdot \nabla)v + \nabla q = f, \qquad & t >0,x\in \mathbb{R}^n,\\
        \div v = 0, \qquad & t  \geq 0,x\in \mathbb{R}^n,\\
        v(0,x) = v_0 (x), \qquad & x \in \mathbb{R}^n
    \end{cases}
\end{align}
and prove that if the initial disturbance $v_0(x) - \up(0,x)$ is sufficiently small in $\dB_{q,\sigma}^{\frac{n}{q}-1}(\mathbb{R}^n)$ with $1 \leq q < 2n$, then \eqref{eq:NS_IVP_v} possesses a unique mild solution {\it in the strong sense}
\footnote{In the known result \cite{Tan-99}, the time periodic stability is proved in the framework of the mild solutions {\it in the weak sense}; see Remark \ref{rem:thm1-1} below.}
and it holds
\begin{align}
    \lim_{t \to \infty}
    \n{v(t) - \up(t)}_{\dB_{q,\sigma}^{\frac{n}{q}-1}(\mathbb{R}^n)}
    =
    0.
\end{align}
Furthermore, we consider the two-dimensional case and show that the above result on the existence of the time-periodic solution {\it fails}, that is, for each $1\leq p \leq 2$ and $0 < \delta \ll 1$, there exists a time-periodic external force $f_{\delta} \in \widetilde{C}(\mathbb{R};\dB_{p,1}^{\frac{2}{p}-3}(\mathbb{R}^2))$ with the norm less than $\delta$ such that there exists no time-periodic solution to \eqref{eq:NS_P} with the force $f_{\delta}$ in some subset of ${C}(\mathbb{R};\dB_{2,1}^0(\mathbb{R}^2))$.



\subsection{Main results}
Now, we provide the precise statements of our main theorems.
To this end, we recall the notion of mild solutions to \eqref{eq:NS_P} which was proposed by \cite{Koz-Nak-96}.
By the Duhamel principle, the equation \eqref{eq:NS_P} is formally equivalent to 
\begin{align}\label{eq:IE_P}
    u(t) = \int_{-\infty}^t e^{(t-\tau)\Delta} \mathbb{P}f(\tau) d\tau - \int_{-\infty}^t e^{(t-\tau)\Delta} \mathbb{P}\div (u(\tau) \otimes u(\tau)) d\tau,
\end{align}
where 
$\mp{ e^{t\Delta}}_{t>0}$ denotes the heat semigroup, 
and
$\mathbb{P} := I +\nabla \div (-\Delta)^{-1}$ is the Helmholtz projection on $\mathbb{R}^n$.
We say that $u$ is a mild solution to \eqref{eq:NS_P} if 
$u$ satisfies \eqref{eq:IE_P} for all $t \in \mathbb{R}$.
See Section \ref{sec:pre} for the definitions of function spaces appearing in the following theorems.
\subsubsection{Higher dimensional case}
We first focus on the existence and stability of the time-periodic strong solutions to \eqref{eq:NS_P} in higher dimensional whole space case $\mathbb{R}^n$ with $n \geq 3$.
The first main result of this paper now reads:
\begin{thm}[Existence of time-periodic solutions on $\mathbb{R}^n$ with $n \geq 3$]\label{thm:1}
    Let $n \geq 3$ be an integer and let $1 \leq p < n$ and $1 \leq \sigma \leq \infty$.
    Then, there exists a positive constant $\delta_0=\delta_0(n,p,\sigma)$ and $\varepsilon_0=\varepsilon_0(n,p,\sigma)$ such that 
    for any $T>0$ and $T$-periodic external force $ f \in \widetilde{C}(\mathbb{R};\dB_{p,\sigma}^{\frac{n}{p}-3}(\mathbb{R}^n))$ satisfying  
    \begin{align}
        \| f \|_{\widetilde{L^{\infty}}(\mathbb{R};\dB_{p,\sigma}^{\frac{n}{p}-3}(\mathbb{R}^n))} \leq \delta_0,
    \end{align}
    the equation \eqref{eq:NS_P} possesses a unique $T$-periodic mild solution $\up \in U_{p,\sigma}(\mathbb{R}^n)$, where  
    \begin{align}
        U_{p,\sigma}(\mathbb{R}^n)
        :=
        \mp{
        u \in \widetilde{C}(\mathbb{R};\dB_{p,\sigma}^{\frac{n}{p}-1}(\mathbb{R}^n))
        \ ; \ 
        \| u \|_{\widetilde{L^{\infty}}(\mathbb{R};\dB_{p,\sigma}^{\frac{n}{p}-1}(\mathbb{R}^n))} \leq \varepsilon_0
        }.
    \end{align}
    Moreover, there exists a positive constant $C=C(n,p,q)$ 
    such that 
    the solution $\up$ satisfies the following a priori estimate:
    \begin{align}\label{apriori}
        \n{\up}_{\widetilde{L^{\infty}}(\mathbb{R};\dB_{p,\sigma}^{\frac{n}{p}-1}(\mathbb{R}^n))} 
        \leq
        C
        \| f \|_{\widetilde{L^{\infty}}(\mathbb{R};\dB_{p,\sigma}^{\frac{n}{p}-3}(\mathbb{R}^n))}.
    \end{align}
\end{thm}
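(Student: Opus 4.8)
The plan is to recast \eqref{eq:IE_P} as a fixed-point problem $u = \mathcal{L}f + B(u,u)$ in the Chemin--Lerner space $X := \widetilde{L^{\infty}}(\mathbb{R};\dB_{p,\sigma}^{\frac{n}{p}-1}(\mathbb{R}^n))$, where
\begin{align}
  \mathcal{L}f(t) := \int_{-\infty}^t e^{(t-\tau)\Delta}\mathbb{P}f(\tau)\,d\tau,
  \qquad
  B(u,v)(t) := -\int_{-\infty}^t e^{(t-\tau)\Delta}\mathbb{P}\div\sp{u(\tau)\otimes v(\tau)}\,d\tau,
\end{align}
and then to apply a standard contraction argument. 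Since $f$ is $T$-periodic and the heat semigroup is translation invariant in time, the substitution $\tau\mapsto\tau-T$ shows $\mathcal{L}f(t+T)=\mathcal{L}f(t)$, and similarly $B$ preserves $T$-periodicity; hence the whole scheme can be run in the closed subspace $X_{\mathrm{per}}\subset X$ of $T$-periodic fields, which automatically forces the solution to be $T$-periodic. Writing $\mp{\Delta_j}_{j\in\mathbb{Z}}$ for the homogeneous Littlewood--Paley projections recalled in Section~\ref{sec:pre}, the two analytic ingredients are a linear estimate $\n{\mathcal{L}f}_{X}\leq C_L\n{f}_{\widetilde{L^{\infty}}(\mathbb{R};\dB_{p,\sigma}^{\frac{n}{p}-3})}$ and a bilinear estimate $\n{B(u,v)}_{X}\leq C_B\n{u}_{X}\n{v}_{X}$.

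For the linear estimate I would localize in frequency and use the block-wise heat decay together with the boundedness of the zeroth-order multiplier $\mathbb{P}$ on each $L^p$-block (valid for all $1\leq p\leq\infty$ at the level of Besov pieces). This gives, for each $j$,
\begin{align}
  \n{\Delta_j\mathcal{L}f(t)}_{L^p}
  \leq C\int_{-\infty}^t e^{-c2^{2j}(t-\tau)}\n{\Delta_j f(\tau)}_{L^p}\,d\tau
  \leq C\,2^{-2j}\,\sup_{\tau\in\mathbb{R}}\n{\Delta_j f(\tau)}_{L^p}.
\end{align}
Taking $\sup_t$ \emph{before} applying $2^{j(\frac{n}{p}-1)}(\cdot)$ and the $\ell^{\sigma}$-sum is exactly where the Chemin--Lerner structure pays off: the $2^{-2j}$ gain converts the $\dB_{p,\sigma}^{\frac{n}{p}-3}$-norm of $f$ into the $\dB_{p,\sigma}^{\frac{n}{p}-1}$-norm of $\mathcal{L}f$, with the time-supremum commuting harmlessly past the dyadic summation. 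The integral at $-\infty$ converges because $f$ is periodic, hence uniformly bounded in $\tau$.

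The bilinear estimate is where I expect the real work to lie. The operator $\mathbb{P}\div$ is first order, so combining it with the heat decay yields $\n{e^{(t-\tau)\Delta}\mathbb{P}\div\Delta_j F}_{L^p}\lesssim 2^{j}e^{-c2^{2j}(t-\tau)}\n{\Delta_j F}_{L^p}$, and integrating in $\tau$ as above produces a net gain of $2^{-j}$, i.e. a loss of one derivative relative to $\mathcal{L}$. Thus everything reduces to the product law
\begin{align}
  \n{u\otimes v}_{\widetilde{L^{\infty}}(\mathbb{R};\dB_{p,\sigma}^{\frac{n}{p}-2})}
  \leq C\,\n{u}_{X}\n{v}_{X},
\end{align}
which I would prove via Bony's paraproduct decomposition, estimating the paraproduct terms by Hölder in time ($L^{\infty}_t\cdot L^{\infty}_t\hookrightarrow L^{\infty}_t$) and Bernstein in space. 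The paraproduct pieces close under $\frac{n}{p}-1<\frac{n}{p}$, while the resonant (remainder) term requires the positivity condition $2\big(\tfrac{n}{p}-1\big)>n\big(\tfrac{2}{p}-1\big)_{+}$; a short computation shows this is equivalent to $n>2$, so it holds precisely because $n\geq 3$, and the condition $\tfrac{n}{p}-1>0$, i.e. $p<n$, guarantees the summand is summable. This is both the main technical obstacle and the structural reason the construction must break down at $n=2$, matching the failure of \eqref{est:imp2} discussed in the introduction.

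With $C_L$ and $C_B$ in hand, the conclusion is routine: choosing $\delta_0$ and $\varepsilon_0$ so that $C_L\delta_0\leq\varepsilon_0/2$ and $2C_B\varepsilon_0<1$, the map $\Phi(u):=\mathcal{L}f+B(u,u)$ sends the ball $\mp{u\in X_{\mathrm{per}}\ ;\ \n{u}_{X}\leq\varepsilon_0}$ into itself and is a contraction there, yielding a unique fixed point $\up\in U_{p,\sigma}(\mathbb{R}^n)$; the a~priori bound \eqref{apriori} then follows from $\n{\up}_{X}\leq C_L\n{f}_{\widetilde{L^{\infty}}(\mathbb{R};\dB_{p,\sigma}^{\frac{n}{p}-3})}+C_B\varepsilon_0\n{\up}_{X}$ after absorbing the last term. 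Finally, the membership $\up\in C(\mathbb{R};\dB_{p,\sigma}^{\frac{n}{p}-1})$, needed for $\widetilde{C}$, is obtained by showing that $\mathcal{L}f$ and $B(\up,\up)$ depend continuously on $t$ in the Besov norm, using dominated convergence on each dyadic block together with the uniform smallness of the high- and low-frequency tails provided by the $\ell^{\sigma}$-summability already established.
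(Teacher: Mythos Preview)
Your proposal is correct and follows essentially the same route as the paper: the paper packages your linear and bilinear estimates as Lemma~\ref{lemm:max-reg} and Lemma~\ref{lemm:nonlin-ndim} (with $p=q$, $r=\infty$) respectively, the latter proved via the same Bony paraproduct argument you sketch, and then runs the identical contraction $\Phi[u]=\mathcal{L}f+B(u,u)$ on the closed set of $T$-periodic fields in $\widetilde{L^{\infty}}(\mathbb{R};\dB_{p,\sigma}^{\frac{n}{p}-1})$. One small imprecision: your remainder positivity condition $2(\tfrac{n}{p}-1)>n(\tfrac{2}{p}-1)_{+}$ is equivalent to $n>2$ \emph{and} $p<n$ jointly (the case $p>2$ gives $p<n$ directly rather than $n>2$), so both hypotheses enter at the same point rather than $p<n$ serving a separate ``summability'' role.
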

\begin{rem}\label{rem:thm1}
    We give some remarks on Theorem \ref{thm:1}. 
    \begin{enumerate}
        \item 
        It is generally acknowledged that time-periodic and stationary problems are closely related, and there is a result on the existence of the stationary solutions in Besov spaces framework that corresponds to Theorem \ref{thm:1}.
        In \cite{Kan-Koz-Shi-19}, Kaneko, Kozono, and Shimizu showed that there exists a unique small solution to  the stationary Navier--Stokes equations on the whole space $\mathbb{R}^n$ with $n \geq 3$ in the scaling critical Besov spaces $\dB_{p,\sigma}^{\frac{n}{p}-1}(\mathbb{R}^n)$ for small external forces in $\dB_{p,\sigma}^{\frac{n}{p}-3}(\mathbb{R}^n)$ for $1 \leq p < n$ and $1 \leq \sigma \leq \infty$.
        \item 
        Let us explain why we use not usual space-time Besov spaces $BC(\mathbb{R};\dB_{p,\sigma}^{s}(\mathbb{R}^n))$ but the Chemin--Lerner spaces $\widetilde{C}(\mathbb{R};\dB_{p,\sigma}^{s}(\mathbb{R}^n))$.
        Considering the bilinear estimates \eqref{est:imp2} with $X=BC(\mathbb{R};\dB_{p,\sigma}^{\frac{n}{p}-1}(\mathbb{R}^n))$,
        it is difficult to show it unless $\sigma = \infty$.
        In contrast, if we switch the order of the $L^{\infty}_t$-norm and the $\ell^{\sigma}$-norm, then we {may make use of} the maximal regularity estimate Lemma \ref{lemm:max-reg} below {to} obtain \eqref{est:imp2} with $\widetilde{C}(\mathbb{R};\dB_{p,\sigma}^{\frac{n}{p}-1}(\mathbb{R}^n))$ for all $1 \leq \sigma \leq \infty$; see Remark \ref{rem:max} below for the detail.
        In particular, choosing $\sigma < \infty$ is significant in the time-periodic stability limit \eqref{lim} in Theorem \ref{thm:1-1} below.
    \end{enumerate}
\end{rem}
{
As is mentioned in the above remark, one of the advantage of Chemin--Lerner space is to treat the asymptotic stability of the time periodic solutions.}
For the stability of \eqref{eq:NS_IVP_v} around the time-periodic solution $\up$ constructed in Theorem \ref{thm:1} above, 
we set $w:=v -\up$ and consider the following equations which $w$ should solve:
\begin{align}\label{eq:perturbed}
    \begin{cases}
    \begin{aligned}
    \partial_t w  
    &
    - \Delta w 
    + (w \cdot \nabla)w\\
    &
    + (\up \cdot \nabla) w 
    + (w \cdot \nabla) \up 
    + \nabla \pi = 0,
    \end{aligned}
    & t>0,x \in \mathbb{R}^n,\\
    \div w = 0,
    & t\geq 0,x \in \mathbb{R}^n,\\
    w(0,x) = w_0(x):= v_0(x) - \up(0,x),
    &x \in \mathbb{R}^n.
    \end{cases}
\end{align}
We say that $w$ is a mild solution to \eqref{eq:perturbed} if it solves the following corresponding integral equation:
\begin{align}\label{IE:w}
    \begin{split}
    w(t)
    =
    e^{t\Delta}w_0
    &
    -
    \int_{0}^t
    e^{(t-\tau)\Delta} \mathbb{P}\div (\up(\tau) \otimes w(\tau) + w(\tau) \otimes \up(\tau)) d\tau\\
    &
    -
    \int_0^{t}
    e^{(t-\tau)\Delta} \mathbb{P}\div (w(\tau) \otimes w(\tau)) d\tau.
    \end{split}
\end{align}
Our result on the time-periodic stability reads as follows:
\begin{thm}[Time-periodic stability on $\mathbb{R}^n$ with $n \geq 3$]\label{thm:1-1}
Let $n \geq 3$ be an integer and let $T>0$.
Let $p$, $q$, $r$, and $\sigma$ satisfy
\begin{gather}
    1 \leq p < n,\qquad
    1 \leq q < 2n, \qquad
    \frac{1}{q}
    -\frac{1}{p}
    <
    \frac{1}{n},
    \qquad
    1 \leq \sigma < \infty,\\
    \max
    \mp{
    0,
    1
    -
    \frac{n}{2}
    \min
    \mp{
    1,
    \frac{2}{q},
    {\frac{1}{p}+\frac{1}{q}}
    }
    }
    <
    \frac{1}{r}
    <
    \frac{1}{2}
    -
    \frac{n}{2}
    \max
    \mp{
    0,{\frac{1}{q}-\frac{1}{p}}
    }.
\end{gather}
Then, there exists a positive constants $\delta_0=\delta_0(n,p,q,r,\sigma)$
such that 
if $T$-periodic solution 
$\up \in \widetilde{C}(\mathbb{R};\dB_{p,\sigma}^{\frac{n}{p}-1}(\mathbb{R}^n))$ 
constructed in Theorem \ref{thm:1}
and 
the initial disturbance $w_0 \in \dB_{q,\sigma}^{\frac{n}{q}-1}(\mathbb{R}^n)$
satisfy 
\begin{align}
    \n{\up}_{\widetilde{L^{\infty}}(\mathbb{R};\dB_{p,\sigma}^{\frac{n}{p}-1}(\mathbb{R}^n))}
    \leq
    \delta_0,\qquad
    \n{w_0}_{\dB_{q,\sigma}^{\frac{n}{q}-1}(\mathbb{R}^n)}
    \leq
    \delta_0,
\end{align}
then
\eqref{eq:perturbed} possesses a unique mild solution $w$ in the class
\begin{align}
    w \in 
    \widetilde{C}([0,\infty);\dB_{q,\sigma}^{\frac{n}{q}-1}(\mathbb{R}^n))
    \cap 
    \widetilde{L^r}(0,\infty;\dB_{q,\sigma}^{\frac{n}{q}-1+\frac{2}{r}}(\mathbb{R}^n)).
\end{align}
Moreover, the following asymptotic limit holds 
\begin{align}\label{lim}
    \lim_{t \to \infty}
    \n{w(t)}_{\dB_{q,\sigma}^{\frac{n}{q}-1}(\mathbb{R}^n)}
    =0.
\end{align}
\end{thm}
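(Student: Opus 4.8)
The plan is to solve the integral equation \eqref{IE:w} by the contraction mapping principle in the scaling-critical solution space
\begin{align}
  X := \widetilde{C}([0,\infty);\dB_{q,\sigma}^{\frac{n}{q}-1}(\mathbb{R}^n)) \cap \widetilde{L^r}(0,\infty;\dB_{q,\sigma}^{\frac{n}{q}-1+\frac{2}{r}}(\mathbb{R}^n)),
\end{align}
and then to read off the limit \eqref{lim} from the global bound so obtained. I would define
\begin{align}
  \Phi(w)(t) := e^{t\Delta}w_0 - \int_0^t e^{(t-\tau)\Delta}\mathbb{P}\div\sp{\up \otimes w + w \otimes \up + w \otimes w}(\tau)\,d\tau
\end{align}
and seek a fixed point in a small ball of $X$. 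The homogeneous term is controlled by the maximal regularity estimate (Lemma \ref{lemm:max-reg}), which gives $\n{e^{t\Delta}w_0}_{X} \leq C\n{w_0}_{\dB_{q,\sigma}^{n/q-1}}$; the admissible window for $1/r$ is precisely what makes the gain of $2/r$ derivatives consistent with the critical regularity $\frac{n}{q}-1$.

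For the nonlinear part I would prove two families of estimates, both reducing, after Bony's paraproduct decomposition and the Chemin--Lerner product inequality, to an application of Lemma \ref{lemm:max-reg} to $\mathbb{P}\div(\cdot)$. The self-interaction $w\otimes w$ yields $\n{\Phi_{\mathrm{bil}}(w)}_X \leq C\n{w}_X^2$, where $q<2n$ and the lower bound on $1/r$ guarantee that $w\otimes w$ lands, after the loss of one derivative from $\div$, in a space to which the maximal regularity lemma applies. The coupling term $\up\otimes w + w\otimes \up$ is the more delicate one because $\up$ is measured in $\widetilde{L^\infty}(\dB_{p,\sigma}^{\frac{n}{p}-1})$ with $p\neq q$ in general; here I would invoke the product estimate across distinct Lebesgue exponents, and it is exactly at this point that $\frac{1}{q}-\frac{1}{p}<\frac{1}{n}$ and the upper bound on $1/r$ are used. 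The output carries the small factor $\n{\up}_{\widetilde{L^\infty}(\mathbb{R};\dB_{p,\sigma}^{\frac{n}{p}-1})}\leq\delta_0$, so that for $\delta_0$ small the coupling and quadratic terms make $\Phi$ a contraction on a small ball, yielding the unique global mild solution in the stated class; the $\widetilde{C}$-continuity in time is then standard.

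The main obstacle is the asymptotic limit \eqref{lim}, and here the hypothesis $\sigma<\infty$ is indispensable. I would argue by restarting the integral equation at a large time: given $\varepsilon>0$, the finiteness of the global norm lets me pick $T_0$ with $\n{w}_{\widetilde{L^r}(T_0,\infty;\,\cdot\,)}<\varepsilon$, and for $t>T_0$ write $w(t)=e^{(t-T_0)\Delta}w(T_0)-\int_{T_0}^t e^{(t-\tau)\Delta}\mathbb{P}\div F(\tau)\,d\tau$ with $F=\up\otimes w+w\otimes\up+w\otimes w$. The Duhamel remainder is bounded in $\dB_{q,\sigma}^{\frac{n}{q}-1}$, uniformly in $t>T_0$, by $C(\varepsilon+\delta_0)\varepsilon$ through the same bilinear and coupling estimates (in their pointwise-in-time form) applied on $[T_0,\infty)$. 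For the linear evolution of $w(T_0)$, writing the $\dB_{q,\sigma}^{\frac{n}{q}-1}$-norm as an $\ell^\sigma$-sum over Littlewood--Paley blocks, each block decays like $e^{-c(t-T_0)2^{2j}}\n{\Delta_j w(T_0)}_{L^q}\to0$, and dominated convergence over $j$ --- valid exactly because $\sigma<\infty$ makes the summand summable --- gives $\n{e^{(t-T_0)\Delta}w(T_0)}_{\dB_{q,\sigma}^{\frac{n}{q}-1}}\to0$ as $t\to\infty$. Hence $\limsup_{t\to\infty}\n{w(t)}_{\dB_{q,\sigma}^{\frac{n}{q}-1}}\leq C(\varepsilon+\delta_0)\varepsilon$, and letting $\varepsilon\to0$ proves \eqref{lim}. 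The bulk of the remaining routine work is checking that the exponents allowed by the hypotheses on $(p,q,r,\sigma)$ simultaneously satisfy every restriction demanded by the Chemin--Lerner product estimates and Lemma \ref{lemm:max-reg}; the stated inequalities on $1/r$ carve out precisely this common admissible region.
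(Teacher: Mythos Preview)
Your proposal is correct and follows essentially the same route as the paper: a contraction argument for $\Phi$ on a small ball in $X=\widetilde{C}([0,\infty);\dB_{q,\sigma}^{\frac{n}{q}-1})\cap\widetilde{L^r}(0,\infty;\dB_{q,\sigma}^{\frac{n}{q}-1+\frac{2}{r}})$, using the maximal regularity Lemma~\ref{lemm:max-reg} and the bilinear estimate Lemma~\ref{lemm:nonlin-ndim} (once with $p=q$ for $w\otimes w$, once with the given $p\neq q$ for the coupling with $\up$), followed by a restart of the Duhamel formula at a large time to establish \eqref{lim}. The paper phrases the asymptotic step slightly differently---it bounds $\n{w}_{\widetilde{L^{\infty}}(T',\infty;\dB_{q,\sigma}^{\frac{n}{q}-1})}$ for $T'>T$ and then lets $T'\to\infty$ and $T\to\infty$ in that order---but this is the same mechanism as your $T_0$-restart plus dominated convergence over Littlewood--Paley blocks, and in both versions $\sigma<\infty$ is what makes the linear part vanish.
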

\begin{rem}\label{rem:thm1-1}
We provide some comments on Theorem \ref{thm:1-1}.
\begin{enumerate}
    \item 
    Theorem \ref{thm:1-1} can be compared with the results of Taniuchi \cite{Tan-99}.
    In his result, the solution to the perturbed equations \eqref{eq:perturbed} should be considered in the framework that $w$ satisfies the integral equation \eqref{IE:w} in the {\it weak sense}.
    This is because 
    if we consider the integral equation \eqref{IE:w} in the strong sense by following the argument in \cite{Tan-99}, 
    we meet a difficulty when controlling the convection terms 
    $ 
    (\up \cdot \nabla) w 
    + 
    (w \cdot \nabla) \up
    $
    in some time-weighted norms like $\sup_{t > 0 }t^{\frac{1}{2}}\n{w(t)}_{L^n(\mathbb{R}^n)}$ since $\up$ does not have any decay structure in time.
    In contrast, our Theorem \ref{thm:1-1} is able to find a solution to \eqref{IE:w} in the {\it strong sense} thanks to the maximal regularity of the heat kernel and bilinear estimates in Chemin--Lerner spaces; see Lemmas \ref{lemm:max-reg} and \ref{lemm:nonlin-ndim} below.
    \item 
    In Theorem \ref{thm:1-1}, the condition $n \geq 3$ is used only for the guarantee of the existence of the time-periodic solution $\up$.
    Therefore, if we obtained a two-dimensional time-periodic solution to \eqref{eq:NS_P} with some external force, then we might obtain the stability result Theorem \ref{thm:1-1} with $n=2$.
    However, as is claimed in Theorem \ref{thm:2} below, the time-periodic problem is not solvable in the two-dimensional case.
\end{enumerate}
\end{rem}
\subsubsection{Two-dimensional case}
Now, we introduce the central work of this paper.
In the following theorem, we claim that Theorem \ref{thm:1} {\it fails} in the two-dimensional case.
\begin{thm}
[Non-existence of the time-periodic solution on $\mathbb{R}^2$]\label{thm:2}
    Let $n=2$ and $1 \leq p \leq 2$.
    Then, there exists a positive constant $\varepsilon_0=\varepsilon_0(p)$ such that for any $0< \delta \leq \varepsilon_0$ and $0 < T \leq 2^{\frac{1}{\delta^2}}$, there exists a $T$-periodic external force $f_{\delta} \in \widetilde{C}(\mathbb{R};\dB_{p,1}^{\frac{2}{p}-3}(\mathbb{R}^2))$ such that 
    \begin{align}
        \| f_{\delta} \|_{\widetilde{L^{\infty}}(\mathbb{R};\dB_{p,1}^{\frac{2}{p}-3}(\mathbb{R}^2))}
        \leq
        \delta
    \end{align}
    and \eqref{eq:NS_P} possesses no $T$-periodic solution belonging to the class $V(\mathbb{R}^2)$, where  
    \begin{align}\label{non-exists-class}
        V(\mathbb{R}^2)
        :=
        \mp{
        u \in BC(\mathbb{R};\dB_{2,1}^0(\mathbb{R}^2))
        \ ; \ 
        \| u(t_0) \|_{\dB_{2,1}^0(\mathbb{R}^2)}
        \leq 
        \varepsilon_0\ {\rm for\ some\ }t_0 \in \mathbb{R}
        }.
    \end{align}
\end{thm}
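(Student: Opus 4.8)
The plan is to argue by contradiction: assuming a $T$-periodic solution $\up$ in the class $V(\mathbb{R}^2)$ exists, I will produce a force $f_{\delta}$ that forces the \emph{time-average} of $\up$ to fall outside $\dB^0_{2,1}(\mathbb{R}^2)$, which is impossible since $V(\mathbb{R}^2)\subset BC(\mathbb{R};\dB^0_{2,1}(\mathbb{R}^2))$. Write $\overline{g}:=\frac1T\int_0^T g(t)\,dt$ for the time-average. Because $\up\in BC(\mathbb{R};\dB^0_{2,1})$, Minkowski's inequality gives $\overline{\up}\in\dB^0_{2,1}$ with $\n{\overline{\up}}_{\dB^0_{2,1}}\le\sup_t\n{\up(t)}_{\dB^0_{2,1}}<\infty$; moreover the embedding $\dB^0_{2,1}\hookrightarrow L^2$ shows that $\up(t)\in L^2$ uniformly in $t$, so the Reynolds stress $R:=\overline{\up\otimes\up}$ lies in $L^1(\mathbb{R}^2)$ and its matrix of total masses $M:=\int_{\mathbb{R}^2}R\,dx$ is a finite, symmetric, positive semi-definite $2\times2$ matrix.

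Next I would isolate the \emph{zero temporal mode}. Choosing $f_{\delta}$ to be purely oscillating (so that $\overline{f_{\delta}}=0$), averaging the equation over one period annihilates $\partial_t\up$ and yields the stationary identity $-\Delta\overline{\up}+\mathbb{P}\div R=0$, i.e. $\overline{\up}=-(-\Delta)^{-1}\mathbb{P}\div R$. On the Fourier side $R\in L^1$ forces $\widehat{R}$ to be continuous with $\widehat{R}(0)=M$, so that $\widehat{\overline{\up}}(\xi)=-|\xi|^{-2}\mathbb{P}(\xi)\,i\xi_kM_{k\cdot}+o(|\xi|^{-1})$ as $\xi\to0$. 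A direct computation shows that the leading, homogeneous degree $-1$ symbol $|\xi|^{-2}\mathbb{P}(\xi)\,i\xi_kM_{k\cdot}$ vanishes identically \emph{only} when $M$ is a scalar multiple of the identity; otherwise its modulus is bounded below by $c\,|M_{\mathrm{aniso}}|\,|\xi|^{-1}$, where $M_{\mathrm{aniso}}$ is the trace-free part of $M$. Since in two dimensions $\int_{|\xi|\sim2^j}|\xi|^{-2}\,d\xi$ is a positive constant independent of $j$, this yields $\n{\Delta_j\overline{\up}}_{L^2}\gtrsim|M_{\mathrm{aniso}}|$ for all sufficiently negative $j$, whence $\n{\overline{\up}}_{\dB^0_{2,1}}=\infty$. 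Thus the whole theorem reduces to the single assertion: \emph{for the constructed force, the anisotropic part $M_{\mathrm{aniso}}$ of the Reynolds stress of any solution in $V(\mathbb{R}^2)$ is nonzero.}

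To force anisotropy I would take $f_{\delta}$ supported on the temporal modes $e^{\pm i\omega t}$ with $\omega=2\pi/T$, with spatial Fourier support concentrated near a single frequency $(\kappa,0)$ and polarized in the $e_2$-direction, normalized so that $\n{f_{\delta}}_{\widetilde{L^{\infty}}(\mathbb{R};\dB^{\frac{2}{p}-3}_{p,1})}\le\delta$; taking $\kappa^2\gtrsim\omega$ places this mode in the regime where the linear time-periodic solution $u_L$, governed by the multiplier $(|\xi|^2+i\omega)^{-1}$, obeys $\n{u_{L,1}}_{L^2}\simeq\delta$. The beating of $u_{L,1}$ against its own conjugate then contributes $2\,\mathrm{Re}\int u_{L,1}\otimes\overline{u_{L,1}}\,dx\simeq\delta^2\,e_2\otimes e_2$ to $M$, whose trace-free part has size $\gtrsim\delta^2$. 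It therefore remains to show that the \emph{actual} Reynolds stress differs from this leading contribution by $o(\delta^2)$.

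The main obstacle is precisely this last estimate. Because the critical bilinear estimate \eqref{est:imp2} \emph{fails} in two dimensions, I cannot run a fixed-point scheme to compare $\up$ with $u_L$; instead I would exploit that $\n{\up(t_0)}_{\dB^0_{2,1}}\le\varepsilon_0$ is small at one time, restart the integral equation at $t_0$, and propagate control of $\up$ across the single period $[t_0,t_0+T]$ by a bootstrap based on a \emph{non-endpoint} (hence logarithmically lossy) bilinear estimate together with the maximal-regularity Lemma \ref{lemm:max-reg}. This bootstrap keeps $\sup_{[t_0,t_0+T]}\n{\up(t)}$ of size $O(\delta)$ only as long as the accumulated logarithmic factor stays controlled, i.e. $\delta^2\log_2 T\lesssim1$, which is exactly the hypothesis $T\le2^{1/\delta^2}$; under it the deviation $M-M_{\mathrm{aniso}}$-producing part is $o(\delta^2)$, the anisotropy survives, and the contradiction of the second paragraph closes the proof. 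I expect the careful quantification of this logarithmic loss — reconciling the genuine two-dimensional borderline failure of \eqref{est:imp2} with a workable finite-time gain over one period — to be the technical heart of the argument.
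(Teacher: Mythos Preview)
Your strategy is genuinely different from the paper's, but it contains a gap that I do not see how to close.

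The decisive problem is the size mismatch between $\varepsilon_0$ and $\delta$. In the statement, $\varepsilon_0=\varepsilon_0(p)$ is a \emph{fixed} constant while $\delta\le\varepsilon_0$ may be arbitrarily small. The only a~priori information on the hypothetical periodic solution is $\n{\up(t_0)}_{\dB^0_{2,1}}\le\varepsilon_0$ at a single time. Your bootstrap, restarted at $t_0$, can therefore at best propagate a bound of order $\varepsilon_0$ over one period (and even this is delicate because of the logarithmic loss), never of order $\delta$. Consequently the difference $w:=\up-u_L$ is only controlled by $\n{w}_{L^2}\lesssim\varepsilon_0$, and the correction to the Reynolds-stress matrix
\[
M-M_L=\frac{1}{T}\int_0^T\!\!\int_{\mathbb{R}^2}\bigl(u_L\otimes w+w\otimes u_L+w\otimes w\bigr)\,dx\,dt
\]
is of size $O(\delta\varepsilon_0+\varepsilon_0^2)=O(\varepsilon_0^2)$, which does \emph{not} dominate the leading anisotropic contribution $\sim\delta^2$ when $\delta\ll\varepsilon_0$. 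Thus the conclusion $M_{\mathrm{aniso}}\neq0$ is not justified, and the contradiction does not close. Put differently: to compare $\up$ with $u_L$ to accuracy $o(\delta)$ you would need exactly the periodic fixed-point estimate whose failure in two dimensions is the phenomenon the theorem records.

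For contrast, the paper's argument never compares $\up$ to a linear profile and never analyzes the time average. It combines two ingredients: (i) a \emph{norm-inflation} proposition for the initial-value problem, showing that for a carefully chosen force $f_\delta$ (whose principal part $\eta\delta\Delta\nabla^{\perp}(\psi(x)\cos(Mx_1))$ is \emph{time-independent}, the mechanism being spatial high-frequency beating rather than temporal oscillation), \emph{every} solution started from \emph{any} data with $\n{a}_{\dB^0_{2,1}}\le\varepsilon_0$ satisfies $\n{u(t_0+k_{\delta,T}T)}_{\dB^0_{2,1}}\ge 2\varepsilon_0$; and (ii) an \emph{unconditional uniqueness} result in $C(I;\dB^0_{2,1}(\mathbb{R}^2))$. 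Applying (i) with $a=\up(t_0)$ and then (ii) forces $\up(t_0)\neq\up(t_0+k_{\delta,T}T)$, contradicting periodicity directly. The key technical device making (i) work on the long interval $[t_0,t_0+k_{\delta,T}T]$ with $k_{\delta,T}T\sim 2^{1/\delta^2}$ is a $\delta$-dependent auxiliary norm
\[
\N{v}_{\delta,I}=\n{v}_{\widetilde{L^\infty}(I;\dB^0_{2,1})}+\frac{1}{\delta}\n{v}_{\widetilde{L^{2/\delta^2}}(I;\dot H^{\delta^2})},
\]
for which the bilinear Duhamel estimate holds with a constant \emph{independent of $\delta$}; this is what replaces the unavailable endpoint estimate and makes the remainder construction close, without ever asserting that $\up$ itself stays $O(\delta)$.
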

\begin{rem}\label{rem:thm2}
    We make mention of some remarks.
    \begin{enumerate}
    \item 
    Our non-existence class $V(\mathbb{R}^2)$ may include functions with arbitrarily large $L^{\infty}(\mathbb{R};\dB_{2,1}^0(\mathbb{R}^2))$-norms.
    From this and 
    $\widetilde{C}(\mathbb{R};\dB_{p,1}^{\frac{2}{p}-1}(\mathbb{R}^2)) 
    \hookrightarrow 
    BC(\mathbb{R};\dB_{2,1}^0(\mathbb{R}^2))$ for $1 \leq p \leq 2$,
    we see that 
    $U_{p,1}(\mathbb{R}^2) \subsetneq V(\mathbb{R}^2)$ with $1 \leq p \leq 2$,
    which implies 
    Theorem \ref{thm:2} claims a stronger results than the negative proposition of Theorem \ref{thm:1} with $n=2$, $1 \leq p \leq 2$, and $\sigma =1$.
    Moreover, even if each $f_{\delta}$ generates a time-periodic solution $u_{{\rm per},\delta}$ in a wider class than $V(\mathbb{R}^2)$, then $u_{{\rm per},\delta} \notin V(\mathbb{R}^2)$ implies that
    $\n{u_{{\rm per},\delta}}_{\widetilde{L^{\infty}}(\mathbb{R};\dB_{p,1}^{\frac{2}{p}-1}(\mathbb{R}^2))}$ must be bounded from below by a positive constant $\varepsilon_0$ independent of $\delta$ although $\| f_{\delta} \|_{\widetilde{L^{\infty}}(\mathbb{R};\dB_{p,1}^{\frac{2}{p}-3}(\mathbb{R}^2))}$ vanishes as $\delta \downarrow 0$; this means that the a priori estimate \eqref{apriori} never holds.
    \item 
    We compare Theorem \ref{thm:2} with the results in \cite{Tsu-18}, where it was shown that the two-dimensional compressible Navier--Stokes equations with time-periodic external forces on the whole plane possesses the small time-periodic solution if the given time-periodic external force satisfy some spatial antiasymmetric conditions.
    In contrast, our external force has a anisotropic structure due to $\cos(Mx_1)$ with some $M \gg 1$; see \eqref{ex:f} below for the detail.
    Thus, it is crucial to impose a certain spatial symmetry for external forces in order to construct a two-dimensional time-periodic solution.
    
    \end{enumerate}
\end{rem}
Let us explain the idea of the proof of Theorem \ref{thm:2}.
We use the contradiction argument.
For $0< \delta \ll 1$ and $f_{\delta}$ proposed in \eqref{ex:f} below, there exists a $T$-periodic solution $u_{{\rm per},\delta} \in C(\mathbb{R};\dB_{2,1}^0(\mathbb{R}^2))$ with $\n{u_{{\rm per},\delta}(t_0)}_{\dB_{2,1}^0(\mathbb{R}^2)} \leq \varepsilon_0$ for some $t_0 \in \mathbb{R}$ and $\varepsilon_0>0$.
Then, using the method for ill-posedness,
Proposition \ref{prop:non-per} below enables us to construct external forces $f_{\delta}$ for each $0 < \delta \leq \varepsilon_0$ such that
there exists a Navier--Stokes flow $u \in C([t_0,t_0+kT];\dB_{2,1}^0(\mathbb{R}^2))$ started at $t=t_0$ satisfying the initial condition $u(t_0) = u_{{\rm per},\delta}(t_0)$ 
and the estimate 
\begin{align}
    \n{u(t_0 + kT)}_{\dB_{2,1}^0(\mathbb{R}^2)} \geq 2\varepsilon_0
\end{align}
for some large $k \in \mathbb{N}$.
Then, since $\n{u(t_0)}_{\dB_{2,1}^0(\mathbb{R}^2)}  = \n{u_{{\rm per},\delta}(t_0)}_{\dB_{2,1}^0(\mathbb{R}^2)} \leq \varepsilon_0$, we see that $u(t_0) \neq u(t_0+kT)$, which means $u$ is not $T$-periodic.
However, since it follows from Proposition \ref{prop:unique} that the uniqueness in $C([t_0,t_0+kT];\dB_{2,1}^0(\mathbb{R}^2))$ holds for solutions to the two-dimensional Navier--Stokes equations, we see that $u=u_{{\rm per},\delta}$ on $[t_0,t_0+kT]$ and 
\begin{align}
    u_{{\rm per},\delta}(t_0) = u(t_0) \neq u(t_0+kT) = u_{{\rm per},\delta}(t_0+kT),
\end{align}
which contradicts the assumption that $u_{{\rm per},\delta}$ is $T$-periodic and completes the proof.
\subsection{Organization and {notations} on this paper}
This paper is organized as follows.
In Section \ref{sec:pre}, we recall the definitions of Besov and Chemin--Lerner spaces and their basic properties.
We focus on the higher dimensional case in Section \ref{sec:nD} and provide the proofs of Theorems \ref{thm:1} and \ref{thm:1-1}.
In Section \ref{sec:2D}, we prove Propositions \ref{prop:non-per} on the construction of non-time-periodic solutions and \ref{prop:unique} for the unconditional uniqueness of two-dimensional Navier--Stokes flow to complete the proof of Theorem \ref{thm:2}.
In Appendix \ref{sec:a}, we note some remarks on the para differential calculus in Chemin--Lerner spaces.

Throughout this paper, we denote by $C$ and $c$ the constants, which may differ in each line. In particular, $C=C(*,...,*)$ denotes the constant which depends only on the quantities appearing in parentheses. 
For any $T>0$, we say that a function $f=f(t)$ on $\mathbb{R}$ is $T$-periodic if $f(t+T)=f(t)$ holds for all $t \in \mathbb{R}$.

\section{Preliminaries}\label{sec:pre}
In this section, we prepare notations used in this paper and recall the definition of several function spaces and their basic properties which are frequently used in this paper.

We recall the definitions of Besov and Chemin--Lerner spaces.
Let $\mathscr{S}(\mathbb{R}^n)$ be the set of all Schwartz functions on $\mathbb{R}^n$, and 
we denote by {$\mathscr{S}'(\mathbb{R}^n)$} the set of all tempered distributions on $\mathbb{R}^n$.
Let $\varphi_0 \in \mathscr{S}(\mathbb{R}^n)$ satisfy 
\begin{align}
    \supp \widehat{\varphi_0} \subset \mp{ \xi \in \mathbb{R}^n\ ;\ 2^{-1} \leq | \xi | \leq 2 },\quad
    0 \leq \widehat{\varphi_0}(\xi) \leq 1, 
\end{align}
and 
\begin{align}
    \sum_{j \in \mathbb{Z}}
    \widehat{\varphi_j}(\xi) = 1 \qquad {\rm for\ all\ }\xi \in  \mathbb{R}^n \setminus \{0\},
\end{align}
where we have set $\varphi_j(x):=2^{nj}\varphi_0(2^jx)$.
{Using them, we define the dyadic frequency localized operators by
\begin{align}
    \Delta_jf:=\mathscr{F}^{-1}\lp{\widehat{\varphi_j}(\xi)\widehat{f}}, \qquad f \in \mathscr{S}'(\mathbb{R}^n),\ j \in \mathbb{Z}.
\end{align}}
For $1 \leq p,\sigma \leq \infty$ and $s \in \mathbb{R}$, the Besov space $\dB_{p,\sigma}^s(\mathbb{R}^n)$ is defined as 
\begin{align}
    \dB_{p,\sigma}^s(\mathbb{R}^n)
    :={}&
    \mp{
    f \in \mathscr{S}'(\mathbb{R}^n)/\mathscr{P}(\mathbb{R}^n)
    \ ; \ 
    \n{f}_{\dB_{p,\sigma}^s(\mathbb{R}^n)}<\infty
    },\\
    \n{f}_{\dB_{p,\sigma}^s(\mathbb{R}^n)}
    :={}&
    \n{
    \mp{
    2^{sj}
    \n{\Delta_j f}_{L^p(\mathbb{R}^n)}
    }_{j \in \mathbb{Z}}
    }_{\ell^{\sigma}(\mathbb{Z})},
\end{align}
where $\mathscr{P}(\mathbb{R}^n)$ is the set of all polynomials on $\mathbb{R}^n$.
It is well-known that if $s <n/p$ or $(s,\sigma) = (n/p,1)$, then it holds
\begin{align}
    \dB_{p,\sigma}^s(\mathbb{R}^n)
    \sim
    \mp{
    f \in \mathscr{S}'(\mathbb{R}^n)
    \ ; \ 
    \n{f}_{\dB_{p,\sigma}^s(\mathbb{R}^n)}<\infty,\quad
    f = \sum_{j \in \mathbb{Z}}\Delta_j f \quad {\rm in\ }\mathscr{S}'(\mathbb{R}^n)
    }.
\end{align}
For $1 \leq p,r,\sigma \leq \infty$, $s \in \mathbb{R}$, and an interval $I \subset \mathbb{R}$, we define 
the Chemin--Lerner space $\widetilde{L^r}(I;\dB_{p,\sigma}^s(\mathbb{R}^n))$ by 
\begin{align}
    \widetilde{L^r}(I;\dB_{p,\sigma}^s(\mathbb{R}^n))
    :={}&
    \mp{
    F:I \to  \mathscr{S}'(\mathbb{R}^n)/\mathscr{P}(\mathbb{R}^n)
    \ ; \ 
    \n{F}_{\widetilde{L^r}(I;\dB_{p,\sigma}^s(\mathbb{R}^n))}
    <\infty
    },\\
    \n{F}_{\widetilde{L^r}(I;\dB_{p,\sigma}^s(\mathbb{R}^n))}
    :={}&
    \n{
    \mp{
    2^{sj}
    \n{\Delta_j F}_{L^r(I;L^p(\mathbb{R}^n))}
    }_{j \in \mathbb{Z}}
    }_{\ell^{\sigma}(\mathbb{Z})}.
\end{align}
Since $\dot{H}^s(\mathbb{R}^n)=\dB_{2,2}^s(\mathbb{R}^n)$,
we write
\begin{align}
    \widetilde{L^r}(I;\dot{H}^s(\mathbb{R}^n))
    :={}&
    \widetilde{L^r}(I;\dB_{2,2}^s(\mathbb{R}^n)).
\end{align}
We also use the following notation
\begin{align}
    \widetilde{C}(I ; \dB_{p,\sigma}^s(\mathbb{R}^n))
    :=
    C(I ; \dB_{p,\sigma}^s(\mathbb{R}^n))
    \cap
    \widetilde{L^{\infty}}(I ; \dB_{p,\sigma}^s(\mathbb{R}^n)).
\end{align}
The Chemin--Lerner spaces were first introduced by \cite{Che-Ler-95} and continue to be frequently used for the analysis of compressible viscous fluids in critical Besov spaces.
The Chemin--Lerner spaces possess similar embedding properties as that for usual Besov spaces:
\begin{itemize}
    \item []
        $\widetilde{L^r}(I; \dB_{p,\sigma_1}^s(\mathbb{R}^n)) 
        \hookrightarrow
        \widetilde{L^r}(I; \dB_{p,\sigma_2}^s(\mathbb{R}^n))$ 
        for $1 \leqslant \sigma_1 \leqslant \sigma_2 \leqslant \infty$,
    \item []
        $\widetilde{L^r}(I; \dB_{p_1,\sigma}^{s+\frac{n}{p_1}}(\mathbb{R}^n)) 
        \hookrightarrow
        \widetilde{L^r}(I; \dB_{p_2,\sigma}^{s+\frac{n}{p_2}}(\mathbb{R}^n))$ 
        for $1 \leqslant p_1 \leqslant p_2 \leqslant \infty$.
\end{itemize}
It also holds by the Hausdorff--Young inequality that
\begin{align}
    &
    \widetilde{L^r}(I; \dB_{p,\sigma}^s(\mathbb{R}^n))
    \hookrightarrow
    {L^r}(I; \dB_{p,\sigma}^s(\mathbb{R}^n))
    \ {\rm for\ }1 \leqslant \sigma \leqslant r \leqslant \infty,\\
    &
    {L^r}(I; \dB_{p,\sigma}^s(\mathbb{R}^n)) 
    \hookrightarrow
    \widetilde{L^r}(I; \dB_{p,\sigma}^s(\mathbb{R}^n))
    \ {\rm for\ }1 \leqslant r \leqslant \sigma \leqslant \infty.
\end{align}
See \cite{Bah-Che-Dan-11} for more precise information of the Chemin--Lerner spaces.
One advantage of using the Chemin--Lerner spaces is that there holds the following maximal regularity estimates for the heat kernel $e^{t\Delta}$.
\begin{lemm}\label{lemm:max-reg}
Let $n \in \mathbb{N}$.
Then, there exists a positive constant $C=C(n)$ such that 
\begin{align}
    \n{
    e^{(t-t_0)\Delta}a
    }_{\widetilde{L^r}(I;\dB_{p,\sigma}^{s+\frac{2}{r}}(\mathbb{R}^n))}
    &\leq
    C
    \n{a}_{\dB_{p,\sigma}^s(\mathbb{R}^n)}, \label{est:max-1}
    \\
    \n{\int_{t_0}^t
    e^{(t-\tau)\Delta}f(\tau)d\tau
    }_{\widetilde{L^r}(I;\dB_{p,\sigma}^{s+\frac{2}{r}}(\mathbb{R}^n))}
    &\leq
    C
    \n{f}_{\widetilde{L^{r_1}}(I;\dB_{p,\sigma}^{s-2+\frac{2}{r_1}}(\mathbb{R}^n))} \label{est:max-2}
\end{align}
for all $I=(t_0,t_1) \subset \mathbb{R}$, $1 \leq p,\sigma \leq \infty$, $1 \leq r_1 \leq r \leq \infty$, $s \in \mathbb{R}$, $a \in \dB_{p,\sigma}^s(\mathbb{R}^n)$, and $f \in \widetilde{L^{r_1}}(I;\dB_{p,\sigma}^{s+\frac{2}{r_1}}(\mathbb{R}^n))$.
\end{lemm}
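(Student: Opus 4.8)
The plan is to reduce both inequalities to a single frequency-localized smoothing bound for the heat semigroup and then to perform the time integration dyadic block by block, exploiting the feature of the Chemin--Lerner norm that the $\ell^{\sigma}$-summation in $j$ is the \emph{outermost} operation.

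First I would establish the key ingredient: there exist constants $c,C>0$, depending only on $n$ and the choice of $\varphi_0$, such that
\begin{align}
    \n{e^{t\Delta}\Delta_j f}_{L^p(\mathbb{R}^n)}
    \leq
    C e^{-c2^{2j}t}\n{\Delta_j f}_{L^p(\mathbb{R}^n)}
\end{align}
for all $j \in \mathbb{Z}$, $t>0$, $1 \leq p \leq \infty$, and $f \in \mathscr{S}'(\mathbb{R}^n)$. To prove this I would fix an auxiliary $\widetilde{\varphi_0}\in\mathscr{S}(\mathbb{R}^n)$ whose Fourier transform equals $1$ on $\supp\widehat{\varphi_0}$ and is supported in a slightly larger annulus, so that $e^{t\Delta}\Delta_j f = g_{j,t}*\Delta_j f$ with $g_{j,t}:=\mathscr{F}^{-1}\lp{e^{-t|\xi|^2}\widehat{\widetilde{\varphi_j}}(\xi)}$. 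Young's convolution inequality gives $\n{e^{t\Delta}\Delta_j f}_{L^p}\leq \n{g_{j,t}}_{L^1}\n{\Delta_j f}_{L^p}$, and the scaling identity $g_{j,t}(x)=2^{nj}g_{0,2^{2j}t}(2^j x)$ reduces the matter to the single-block kernel bound $\n{g_{0,s}}_{L^1}\leq Ce^{-cs}$. The exponential gain comes from $e^{-s|\xi|^2}\leq e^{-cs}$ on the annulus $\supp\widehat{\widetilde{\varphi_0}}$, while the $L^1$-integrability of $g_{0,s}$ follows from the Schwartz decay produced by differentiating the smooth, compactly supported symbol. This is the only genuinely substantive step; everything afterward is bookkeeping of dyadic exponents.

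With this bound, the homogeneous estimate \eqref{est:max-1} is immediate. Applying $\Delta_j$, the smoothing bound, and the elementary computation $\n{e^{-c2^{2j}(\cdot-t_0)}}_{L^r(t_0,\infty)}=(cr)^{-1/r}2^{-2j/r}$ (enlarging $I$ to $(t_0,\infty)$ only increases the norm), I get
\begin{align}
    \n{\Delta_j e^{(t-t_0)\Delta}a}_{L^r(I;L^p)}
    \leq
    C2^{-\frac{2j}{r}}\n{\Delta_j a}_{L^p}.
\end{align}
Multiplying by $2^{(s+\frac{2}{r})j}$ collapses the power of $2^{j}$ to $2^{sj}$, and taking the $\ell^{\sigma}(\mathbb{Z})$-norm reproduces exactly $\n{a}_{\dB_{p,\sigma}^s}$.

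For the Duhamel estimate \eqref{est:max-2} I would commute $\Delta_j$ through the integral, bound the integrand pointwise in time by the smoothing estimate, and read the result as a causal convolution on the half-line,
\begin{align}
    \n{\Delta_j\int_{t_0}^t e^{(t-\tau)\Delta}f(\tau)d\tau}_{L^p}
    \leq
    C\int_{t_0}^t e^{-c2^{2j}(t-\tau)}\n{\Delta_j f(\tau)}_{L^p}d\tau,
\end{align}
whose right-hand side is $\sp{K_j*\n{\Delta_j f}_{L^p}}(t)$ with the causal kernel $K_j(\theta)=e^{-c2^{2j}\theta}$ for $\theta>0$. Young's inequality in the time variable with exponents $1+\frac{1}{r}=\frac{1}{m}+\frac{1}{r_1}$ — legitimate since $r_1\leq r$ forces $\frac{1}{m}=1-(\frac{1}{r_1}-\frac{1}{r})\in[0,1]$ — produces the factor $\n{K_j}_{L^m(0,\infty)}=C2^{-\frac{2j}{m}}$. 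Since $\frac{2}{m}=2+\frac{2}{r}-\frac{2}{r_1}$, the accumulated power is $2^{(s+\frac{2}{r})j}\cdot 2^{-\frac{2j}{m}}=2^{(s-2+\frac{2}{r_1})j}$, so taking the $\ell^{\sigma}(\mathbb{Z})$-norm yields precisely $\n{f}_{\widetilde{L^{r_1}}(I;\dB_{p,\sigma}^{s-2+\frac{2}{r_1}})}$. I expect the only delicate points to be the kernel bound $\n{g_{0,s}}_{L^1}\leq Ce^{-cs}$ and the verification that the Young exponent $m$ lies in the admissible range; the cleanness of the whole scheme — and the reason the ordinary norm $L^r(I;\dB_{p,\sigma}^s)$ fails when $\sigma\neq r$ — is exactly that the time integration is carried out on each dyadic block \emph{before} the $\ell^{\sigma}$-summation, so the per-block gains $2^{-2j/r}$ and $2^{-2j/m}$ can be absorbed directly into the regularity shift with no interchange of $\ell^{\sigma}$ and $L^r_t$.
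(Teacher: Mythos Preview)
Your proposal is correct and follows essentially the same route as the paper: both rest on the localized smoothing bound $\n{\Delta_j e^{t\Delta}a}_{L^p}\leq Ce^{-c2^{2j}t}\n{\Delta_j a}_{L^p}$ (which the paper cites from \cite{Bah-Che-Dan-11}*{Lemma 2.4} rather than sketching, as you do), then integrate in time block-by-block---directly for \eqref{est:max-1}, and via Young's inequality in $t$ with the same exponent relation $1+1/r=1/m+1/r_1$ for \eqref{est:max-2}---before taking the outer $\ell^{\sigma}$-sum. Your exponent bookkeeping and the remark on why the Chemin--Lerner ordering is essential match the paper's argument exactly.
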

\begin{rem}\label{rem:max}
If we attempt to show \eqref{est:max-1} with the norm of left hand side replaced by that for the usual time-space Besov norm $L^r(I;\dB_{p,\sigma}^{s+\frac{2}{r}}(\mathbb{R}^n))$, 
we fail by the following argument: 
\begin{align}
    \n{
    e^{t\Delta}a
    }_{L^r(I;\dB_{p,\sigma}^{s+\frac{2}{r}}(\mathbb{R}^n))}
    ={}&
    \sp{
    \int_{t_0}^{t_1}
    \n{e^{(t-t_0)\Delta}a}_{\dB_{p,\sigma}^{s+\frac{2}{r}}(\mathbb{R}^n)}^r
    dt
    }^{\frac{1}{r}}\\
    \leq{}&
    C
    \lp{
    \int_{t_0}^{t_1}
    \mp{(t-t_0)^{-\frac{1}{r}}\n{a}_{\dB_{p,\sigma}^s(\mathbb{R}^n)}}^r
    dt
    }^{\frac{1}{r}}
    =\infty
\end{align}
{for non-zero $a$,}
where we have used the smoothing estimate for the heat kernel (see \cite{Koz-Oga-Tan-03}*{Lemma 2.2}).
In contrast, we succeed to obtain the maximal regularity estimate by {change} the order of the $L^r_t$-norm and $\ell^{\sigma}$-norm in the Besov norm.
The similar situation as above holds for \eqref{est:max-2}.
\end{rem}
\begin{proof}[Proof of Lemma \ref{lemm:max-reg}]
Although the proof is immediately obtained by \cite{Bah-Che-Dan-11}*{Corollary 2.5}, we shall give the outline of the proof for the readers' convenience. 
It follows from \cite{Bah-Che-Dan-11}*{Lemma 2.4} that there exists an absolute positive constant $C_*$ such that
\begin{align}\label{pf:max-1-1}
    2^{\frac{2}{r}j}
    \left\| \Delta_j e^{(t-t_0)\Delta}a \right\|_{L^p(\mathbb{R}^n)} 
    &\leqslant 
    C_*
    e^{-C_*^{-1}(t-t_0)2^{2j}}
    \| \Delta_j a \|_{L^p(\mathbb{R}^n)}
\end{align}
for all $j \in \mathbb{Z}$.
Taking $L^r(I)$ norm of \eqref{pf:max-1-1}, we see that
\begin{align}
    \| \Delta_j e^{(t-t_0)\Delta}a \|_{L^r(I;L^p(\mathbb{R}^n))} 
    &\leqslant C_*\left\| e^{-C_*^{-1}(t-t_0)2^{2j}} \right\|_{L^r(t_0,\infty)} \| \Delta_j F \|_{L^p(\mathbb{R}^n)}\\
    &= C_*(C_*^{-1}r)^{-\frac{1}{r}}2^{-\frac{2}{r}j}\| \Delta_j F \|_{L^p(\mathbb{R}^n)}.
\end{align}
As $(C_*^{-1}r)^{-\frac{1}{r}}$ is bounded with respect to $r$, we obtain \eqref{est:max-1} by multiplying \eqref{pf:max-1-1} by $2^{sj}$ and taking $\ell^{\sigma}$-norm{. Thus,} we complete the proof of \eqref{est:max-1}.

We next prove \eqref{est:max-2}. 
Let $1 \leqslant r_2 \leqslant \infty$ satisfy $1+1/r=1/r_2+1/r_1$.
It follows from \eqref{pf:max-1-1} and the Hausdorff-Young inequality for the time convolution that
\begin{align}
    \left\| \Delta_j \int_{t_0}^t e^{(t-\tau)\Delta} f(\tau) d\tau \right\|_{L^{r}(I;L^p(\mathbb{R}^n))}
    &\leqslant
    C_*
    \left\|  \int_{t_0}^t e^{-C_*^{-1}(t-\tau)2^{2j}} \| \Delta_j f(\tau)\|_{L^p(\mathbb{R}^n)} d\tau \right\|_{L^{r}(I)}\\
    &\leqslant
    C_*
    \left\| e^{-C_*^{-1}t2^{2j}} \right\|_{L^{r_2}(0,\infty)}
    \| \Delta_j f \|_{L^{r_1}(I;L^p(\mathbb{R}^n))}\\
    &=
    C_*
    (C_*^{-1}r_2)^{-\frac{1}{r_2}}
    2^{(-2+\frac{2}{r_1}-\frac{2}{r})j}
    \| \Delta_j f \|_{L^r(I;L^p(\mathbb{R}^n))}
\end{align}
Note that $(C_*^{-1}r_2)^{-\frac{1}{r_2}}$ is bounded with respect to $1 \leqslant r_2 \leqslant \infty$.
Thus, we obtain \eqref{est:max-2} by multiplying \eqref{pf:max-1-1} by $2^{(s+\frac{2}{r})j}$ and taking $\ell^{\sigma}$-norm, we complete the proof of \eqref{est:max-1}.
\end{proof}
Combining Lemma \ref{lemm:max-reg} and the para-differential calculus in Appendix \ref{sec:a}, we obtain the following bilinear estimate.
\begin{lemm}\label{lemm:nonlin-ndim}
Let $n \geq 2$ be an integer.
Let $1 \leq p,q,\sigma \leq \infty$ and ${2} \leq r \leq r_1\leq \infty$ satisfy
\begin{gather}
    1 \leq p,q,r,\sigma \leq \infty,\qquad
    r \leq r_1 \leq \infty,\\
    \max
    \mp{
    0,n\sp{\frac{1}{q}-\frac{1}{p}}
    }
    < 1 - \frac{2}{r},\qquad
    \min\mp{n,n\sp{\frac{1}{p}+\frac{1}{q}}}-2+\frac{2}{r}>0.
    \label{assump:pqr}
\end{gather}
Then, there exists a positive constant $C=C(n,p,q,r,\sigma)$ such that
\begin{align}
    &\n{
    \int_{t_0}^t
    e^{(t-\tau)\Delta}
    \mathbb{P}\div(u(\tau) \otimes v(\tau))d\tau
    }_{\widetilde{L^{r_1}}(I;\dB_{q,\sigma}^{\frac{n}{q}-1+\frac{2}{r_1}}(\mathbb{R}^n))}\\
    &\quad\leq
    C
    \n{u}_{\widetilde{L^{\infty}}(I;\dB_{p,\sigma}^{\frac{n}{p}-1}(\mathbb{R}^n))}
    \n{v}_{\widetilde{L^{r}}(I;\dB_{q,\sigma}^{\frac{n}{q}-1+\frac{2}{r}}(\mathbb{R}^n))}
\end{align}
for all 
$I=(t_0,t_1) \subset \mathbb{R}$, 
$u \in \widetilde{L^{\infty}}(I;\dB_{p,\sigma}^{\frac{n}{p}-1}(\mathbb{R}^n))$, 
and 
$v \in \widetilde{L^r}(I;\dB_{q,\sigma}^{\frac{n}{q}-1+\frac{2}{r}}(\mathbb{R}^n))$.
\end{lemm}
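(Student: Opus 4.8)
The plan is to reduce the claimed space-time estimate to a purely spatial product estimate by means of the maximal regularity of the heat semigroup, and then run Bony's paraproduct decomposition while tracking the integrability indices carefully. First I would apply the maximal regularity estimate \eqref{est:max-2} to $g:=\mathbb{P}\div(u\otimes v)$, choosing the regularity parameter so that the target space is $\dB_{q,\sigma}^{\frac{n}{q}-1+\frac{2}{r_1}}$; its hypothesis $r\leq r_1$ (source time-exponent $\leq$ target time-exponent) holds by assumption, and it bounds the Duhamel integral by $\n{g}_{\widetilde{L^r}(I;\dB_{q,\sigma}^{\frac{n}{q}-3+\frac{2}{r}})}$. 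Since $\mathbb{P}$ is a homogeneous Fourier multiplier of degree $0$ and $\div$ is of degree $1$, the operator $\mathbb{P}\div$ maps $\dB_{q,\sigma}^{s}$ boundedly into $\dB_{q,\sigma}^{s-1}$, so everything reduces to the spatial product estimate
\[
    \n{u\otimes v}_{\widetilde{L^r}(I;\dB_{q,\sigma}^{\frac{n}{q}-2+\frac{2}{r}})}
    \leq
    C\n{u}_{\widetilde{L^\infty}(I;\dB_{p,\sigma}^{\frac{n}{p}-1})}
    \n{v}_{\widetilde{L^r}(I;\dB_{q,\sigma}^{\frac{n}{q}-1+\frac{2}{r}})}.
\]
Because $u\in\widetilde{L^\infty}_t$ and $v\in\widetilde{L^r}_t$, Hölder in time ($\tfrac1\infty+\tfrac1r=\tfrac1r$) is compatible with the Chemin--Lerner ordering of the $\ell^\sigma$- and $L^r_t$-norms, and the estimate can be run blockwise as in the para-differential calculus of Appendix \ref{sec:a}; thus it suffices to prove the corresponding static Besov product estimate and insert the time-Lebesgue norm on each dyadic block.

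Next I would decompose $u\otimes v=T_uv+T_vu+R(u,v)$ and estimate the three pieces separately, with regularities $\alpha=\frac np-1$ for $u$, $\beta=\frac nq-1+\frac2r$ for $v$, and target $\gamma=\frac nq-2+\frac2r=\beta-1$. For the paraproduct $T_uv$, where $u$ sits at low frequency, the embedding $\dB_{p,\sigma}^{\frac np-1}\hookrightarrow\dB_{\infty,\infty}^{-1}$ together with the standard paraproduct bound places $T_uv$ in $\dB_{q,\sigma}^{\beta-1}=\dB_{q,\sigma}^{\gamma}$ with no further restriction. For $T_vu$, now $v$ is the low-frequency factor and the partial sums $S_{k-1}v$ must be controlled in an $L^\infty$-type (or $L^a$-) norm; a Bernstein estimate shows this low-frequency sum converges and is dominated by its top frequency precisely when $n(\tfrac1q-\tfrac1p)<1-\tfrac2r$, which together with the trivial requirement $0<1-\tfrac2r$ is exactly the first hypothesis in \eqref{assump:pqr}.

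The remainder $R(u,v)$ is the delicate term, and the key idea is to estimate it first in the \emph{lower} integrability exponent and only afterwards embed into the $L^q$-based target. If $\tfrac1p+\tfrac1q\leq1$, forming the product in $L^{c}$ with $\tfrac1c=\tfrac1p+\tfrac1q$ and summing the dyadic pieces from above requires $\tfrac np-1+\beta>0$, i.e. $n(\tfrac1p+\tfrac1q)-2+\tfrac2r>0$, after which $\dB_{c,\sigma}^{\frac np-1+\beta}\hookrightarrow\dB_{q,\sigma}^{\gamma}$ lands exactly on the target. If instead $\tfrac1p+\tfrac1q>1$, one works in $L^1$ (using Bernstein to raise $v$ to $L^{p'}$), which produces regularity $n-2+\tfrac2r$ and the condition $n-2+\tfrac2r>0$; the two cases are unified by the second hypothesis $\min\{n,\,n(\tfrac1p+\tfrac1q)\}-2+\tfrac2r>0$. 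Reassembling the three bounds, I would then take the time-Lebesgue norm on each block and apply the discrete Young inequality in the $\ell^\sigma$-summation, where the geometric decay supplied by the convergence conditions is consumed, to arrive at the asserted bilinear estimate.

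The main obstacle is the remainder term: the naive estimate that forms the product directly in the $L^q$-based space closes only under the too-strong condition $\gamma>0$, and the sharp hypothesis in \eqref{assump:pqr} is recovered only by exploiting that the dyadic summation ``sees'' the higher regularity available in the lower-integrability space $L^{c}$ (respectively $L^1$) before the final Besov embedding. Getting the integrability bookkeeping correct across the two regimes $\tfrac1p+\tfrac1q\lessgtr1$, while simultaneously preserving the Chemin--Lerner ordering throughout, is the crux; by contrast the paraproduct contributions are routine once the low-frequency summation in $T_vu$ is controlled by the first hypothesis.
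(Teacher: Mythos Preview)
Your proposal is correct and follows essentially the same route as the paper: reduce via the maximal regularity estimate \eqref{est:max-2} to a product bound in $\widetilde{L^r}(I;\dB_{q,\sigma}^{\frac{n}{q}-2+\frac{2}{r}})$, then run Bony's decomposition with $T_uv$ handled by embedding $u$ into a negative-regularity $L^\infty$-based space, $T_vu$ split according to the sign of $n(\tfrac1q-\tfrac1p)-1+\tfrac2r$, and $R(u,v)$ estimated in the lower-integrability space ($L^1$ when $\tfrac1p+\tfrac1q\geq1$, $L^c$ with $\tfrac1c=\tfrac1p+\tfrac1q$ otherwise) before embedding back. The only cosmetic difference is that the paper organizes the $T_vu$ estimate as a dichotomy $p\leq q$ versus $q\leq p$ rather than phrasing it as convergence of the low-frequency sum, but the underlying computation and the role of the hypothesis $\max\{0,n(\tfrac1q-\tfrac1p)\}<1-\tfrac2r$ are identical.
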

\begin{rem}\label{rem:nonlin}
In the proof of Theorem \ref{thm:1}, we use the case $p=q$ and $r=\infty$:
\begin{align}
    &\n{\int_{t_0}^t
    e^{(t-\tau)\Delta}
    \mathbb{P}{\div}(u(\tau) \otimes v(\tau))d\tau
    }_{\widetilde{L^{\infty}}(I;\dB_{p,\sigma}^{\frac{n}{p}-1}(\mathbb{R}^n))}\\
    &\quad\leq
    C_1
    \n{u}_{\widetilde{L^{\infty}}(I;\dB_{p,\sigma}^{\frac{n}{p}-1}(\mathbb{R}^n))}
    \n{v}_{\widetilde{L^{\infty}}(I;\dB_{p,\sigma}^{\frac{n}{p}-1}(\mathbb{R}^n))},
\end{align}
where we need to assume $n \geq 3$ and $1 \leq p < n$ due to the conditions \eqref{assump:pqr}.
\end{rem}
\begin{proof}[Proof of Lemma \ref{lemm:nonlin-ndim}]
It follows from Lemma \ref{lemm:max-reg} that
\begin{align}
    &
    \n{
    \int_{t_0}^t
    e^{(t-\tau)\Delta}
    \mathbb{P}\div(u(\tau) \otimes v(\tau))d\tau
    }_{\widetilde{L^r}(I;\dB_{q,\sigma}^{\frac{n}{q}-1+\frac{2}{r}}(\mathbb{R}^n))}\\
    &\quad 
    \leq
    C
    \n{u \otimes v}_{\widetilde{L^r}(I;\dB_{q,\sigma}^{\frac{n}{q}-2+\frac{2}{r}}(\mathbb{R}^n))}\\
    &\quad 
    \leq
    C
    \sum_{k,\ell=1}^n
    \n{T_{u_k}v_{\ell}}_{\widetilde{L^r}(I;\dB_{q,\sigma}^{\frac{n}{q}-2+\frac{2}{r}}(\mathbb{R}^n))}
    +
    C
    \sum_{k,\ell=1}^n
    \n{T_{v_{\ell}} u_k}_{\widetilde{L^r}(I;\dB_{q,\sigma}^{\frac{n}{q}-2+\frac{2}{r}}(\mathbb{R}^n))}\\
    &\qquad
    +
    C
    \sum_{k,\ell=1}^n
    \n{R(u_k,v_{\ell})}_{\widetilde{L^r}(I;\dB_{q,\sigma}^{\frac{n}{q}-2+\frac{2}{r}}(\mathbb{R}^n))}\\
    &\quad
    =:
    A_1[u,v]+A_2[u,v]+A_3[u,v].
\end{align}
{
See Appendix \ref{sec:a} for the definitions of the Bony decompositions $T_fg$ and $R(f,g)$.
}
First, we consider the estimate of $A_1[u,v]$.
By Lemma \ref{lemm:para} (1), we have
\begin{align}
    A_1[u,v]
    \leq{}&
    C
    \n{u}_{\widetilde{L^{\infty}}(I;\dB_{\infty,\sigma}^{-1}(\mathbb{R}^n))}
    \n{v}_{\widetilde{L^{r}}(I;\dB_{q,\sigma}^{\frac{n}{q}-1+\frac{2}{r}}(\mathbb{R}^n))}\\
    \leq{}&
    C
    \n{u}_{\widetilde{L^{\infty}}(I;\dB_{p,\sigma}^{\frac{n}{p}-1}(\mathbb{R}^n))}
    \n{v}_{\widetilde{L^{r}}(I;\dB_{q,\sigma}^{\frac{n}{q}-1+\frac{2}{r}}(\mathbb{R}^n))}.
\end{align}
Next, we focus on the estimate of $A_2[u,v]$.
For the case of $p \leq q$,
it holds by Lemma \ref{lemm:para} and $-1+2/r<0$ that 
\begin{align}
    A_2[u,v]
    \leq{}&
    C
    \n{v}_{\widetilde{L^{r}}(I;\dB_{\infty,\sigma}^{-1+\frac{2}{r}}(\mathbb{R}^n))}
    \n{u}_{\widetilde{L^{\infty}}(I;\dB_{q,\sigma}^{\frac{n}{q}-1}(\mathbb{R}^n))}
    \\
    \leq{}&
    C
    \n{v}_{\widetilde{L^r}(I;\dB_{q,\sigma}^{\frac{n}{q}-1+\frac{2}{r}}(\mathbb{R}^n))}
    \n{u}_{\widetilde{L^{\infty}}(I;\dB_{p,\sigma}^{\frac{n}{p}-1}(\mathbb{R}^n))}
    {.}
\end{align}
For the case of $q \leq p$,
we define $1 \leq \theta \leq \infty$ by $1/q = 1/\theta + 1/p$. 
Then, using $q \leq \theta$, $1/q = 1/\theta + 1/p$, and ${n}/{\theta}-1+{2}/{r} = n(1/q - 1/p) - 1 + 2/r < 0$, we see that 
\begin{align}
    A_2[u,v]
    \leq{}&
    C
    \n{v}_{\widetilde{L^{r}}(I;\dB_{\theta,\sigma}^{\frac{n}{\theta}-1+\frac{2}{r}}(\mathbb{R}^n))}
    \n{u}_{\widetilde{L^{\infty}}(I;\dB_{p,\sigma}^{\frac{n}{p}-1}(\mathbb{R}^n))}
    \\
    \leq{}&
    C
    \n{v}_{\widetilde{L^r}(I;\dB_{q,\sigma}^{\frac{n}{q}-1+\frac{2}{r}}(\mathbb{R}^n))}
    \n{u}_{\widetilde{L^{\infty}}(I;\dB_{p,\sigma}^{\frac{n}{p}-1}(\mathbb{R}^n))}.
\end{align}
Finally, we consider the estimate of $A_3[u,v]$.
For the case of $1/p + 1/q \geq 1$, it holds by Lemma \ref{lemm:para} (2) and $n-2 +2/r >0$ that
\begin{align}
    A_3[u,v]
    \leq{}&
    C
    \sum_{k,\ell=1}^n
    \n{R(u_k,v_{\ell})}_{\widetilde{L^r}(I;\dB_{1,\sigma}^{n-2+\frac{2}{r}}(\mathbb{R}^n))} \\
    \leq{}&
    C
    \n{u}_{\widetilde{L^{\infty}}(I;\dB_{q',\sigma}^{\frac{n}{q'}-1}(\mathbb{R}^n))}
    \n{v}_{\widetilde{L^{r}}(I;\dB_{q,\sigma}^{\frac{n}{q}-1+\frac{2}{r}}(\mathbb{R}^n))}\\
    \leq{}&
    C
    \n{u}_{\widetilde{L^{\infty}}(I;\dB_{p,\sigma}^{\frac{n}{p}-1}(\mathbb{R}^n))}
    \n{v}_{\widetilde{L^{r}}(I;\dB_{q,\sigma}^{\frac{n}{q}-1+\frac{2}{r}}(\mathbb{R}^n))},
\end{align}
where we have used $p \leq q'$.
For the case of $1/p + 1/q \leq 1$, we define $1 \leq \zeta \leq \infty$ by $1/\zeta = 1/p + 1/q$.
Then, we have by {$n/\zeta -2 + 2/r = n(1/p + 1/q) -2 + 2/r >0$} that
\begin{align}
    A_3[u,v]
    \leq{}&
    C
    \sum_{k,\ell=1}^n
    \n{R(u_k,v_{\ell})}_{\widetilde{L^r}(I;\dB_{\zeta,\sigma}^{\frac{n}{\zeta}-2+\frac{2}{r}}(\mathbb{R}^n))} \\
    \leq{}&
    C
    \n{u}_{\widetilde{L^{\infty}}(I;\dB_{p,\sigma}^{\frac{n}{p}-1}(\mathbb{R}^n))}
    \n{v}_{\widetilde{L^{r}}(I;\dB_{q,\sigma}^{\frac{n}{q}-1+\frac{2}{r}}(\mathbb{R}^n))}
    {.}
\end{align}
Hence, we complete the proof.
\end{proof}
\section{Higher dimensional analysis: Proofs of Theorems \ref{thm:1} and \ref{thm:1-1}}\label{sec:nD}
In this section, we provide the proofs of our main theorems on the higher dimensional case.
We are ready to prove Theorems \ref{thm:1} and \ref{thm:1-1}.
\begin{proof}[Proof of Theorem \ref{thm:1}]
By Lemma \ref{lemm:max-reg} and Lemma \ref{lemm:nonlin-ndim} with $p=q$ and $r=\infty$ (see also Remark \ref{rem:nonlin}),
there exists a positive constant $C_1=C_1(n,p,\sigma)$ such that 
\begin{align}
    &\n{\int_{-\infty}^t
    e^{(t-\tau)\Delta}
    \mathbb{P}f(\tau)d\tau
    }_{\widetilde{L^{\infty}}(\mathbb{R};\dB_{p,\sigma}^{\frac{n}{p}-1}(\mathbb{R}^n))}
    \leq 
    C_1 
    \n{f}_{\widetilde{L^{\infty}}(\mathbb{R};\dB_{p,\sigma}^{\frac{n}{p}-3}(\mathbb{R}^n))},
    \\
    &
    \begin{aligned}
    &\n{\int_{-\infty}^t
    e^{(t-\tau)\Delta}
    \mathbb{P}{\div}(u(\tau) \otimes v(\tau))d\tau
    }_{\widetilde{L^{\infty}}(\mathbb{R};\dB_{p,\sigma}^{\frac{n}{p}-1}(\mathbb{R}^n))}\\
    &\quad\leq
    C_1
    \n{u}_{\widetilde{L^{\infty}}(\mathbb{R};\dB_{p,\sigma}^{\frac{n}{p}-1}(\mathbb{R}^n))}
    \n{v}_{\widetilde{L^{\infty}}(\mathbb{R};\dB_{p,\sigma}^{\frac{n}{p}-1}(\mathbb{R}^n))}
    \end{aligned}
\end{align}
for all $f \in \widetilde{L^{\infty}}(\mathbb{R};\dB_{p,\sigma}^{\frac{n}{p}-3}(\mathbb{R}^n))$ and $u,v \in \widetilde{L^{\infty}}(\mathbb{R};\dB_{p,\sigma}^{\frac{n}{p}-1}(\mathbb{R}^n))$.
Now, let $f$ be a $T$-periodic external force satisfying
\begin{align}
    f \in \widetilde{C}(\mathbb{R};\dB_{p,\sigma}^{\frac{n}{p}-3}(\mathbb{R}^n)),\qquad
    \n{f}_{\widetilde{L^{\infty}}(\mathbb{R};\dB_{p,\sigma}^{\frac{n}{p}-3}(\mathbb{R}^n))}
    \leq
    \frac{1}{16C_1^2}.
\end{align}
We consider the map
\begin{align}
    \Phi[u](t):=
    \int_{-\infty}^t
    e^{(t-\tau)\Delta}
    \mathbb{P}f(\tau)d\tau
    -
    \int_{-\infty}^t
    e^{(t-\tau)\Delta}
    \mathbb{P}{\div}(u(\tau)\otimes u(\tau))d\tau
\end{align}
on the complete metric space {$(S_{p,\sigma},d_{S_{p,\sigma}})$, which is defined by}
\begin{align}
    &
    S_{p,\sigma}
    :=
    \mp{
    u \in \widetilde{C}(\mathbb{R};\dB_{p,\sigma}^{\frac{n}{p}-1}(\mathbb{R}^n))\ ;\ 
    \begin{aligned}
    &
    u(t+T)=u(t) \quad {\rm for\ all\ }t \in \mathbb{R},\\
    &
    \n{u}_{\widetilde{L^{\infty}}(\mathbb{R};\dB_{p,\sigma}^{\frac{n}{p}-1}(\mathbb{R}^n))}
    \leq 
    2C_1 
    \n{f}_{\widetilde{L^{\infty}}(\mathbb{R};\dB_{p,\sigma}^{\frac{n}{p}-3}(\mathbb{R}^n))}
    \end{aligned}
    },\\
    &
    {d_{S_{p,\sigma}}(u,\widetilde{u})
    :=
    \n{u-\widetilde{u}}_{\widetilde{L^{\infty}}(0,\infty;\dB_{p,\sigma}^{\frac{n}{p}-1}(\mathbb{R}^n))}.}
\end{align}
Then, for any $u \in S_{p,\sigma}$, since $\Phi[u]$ is $T$-periodic and satisfies
\begin{align}
    \n{\Phi[u]}_{\widetilde{L^{\infty}}(\mathbb{R};\dB_{p,\sigma}^{\frac{n}{p}-1}(\mathbb{R}^n))}
    \leq{}&
    C_1
    \n{f}_{\widetilde{L^{\infty}}(\mathbb{R};\dB_{p,\sigma}^{\frac{n}{p}-3}(\mathbb{R}^n))}
    +
    C_1
    \n{u}_{\widetilde{L^{\infty}}(\mathbb{R};\dB_{p,\sigma}^{\frac{n}{p}-1}(\mathbb{R}^n))}^2\\
    \leq{}&
    C_1
    \n{f}_{\widetilde{L^{\infty}}(\mathbb{R};\dB_{p,\sigma}^{\frac{n}{p}-3}(\mathbb{R}^n))}
    +
    4C_1^3
    \n{f}_{\widetilde{L^{\infty}}(\mathbb{R};\dB_{p,\sigma}^{\frac{n}{p}-3}(\mathbb{R}^n))}^2\\
    \leq{}&
    2C_1
    \n{f}_{\widetilde{L^{\infty}}(\mathbb{R};\dB_{p,\sigma}^{\frac{n}{p}-3}(\mathbb{R}^n))},
\end{align}
we see that $\Phi[u] \in S_{p,\sigma}$.
For $u,v \in {S_{p,\sigma}}$, there holds by Lemma \ref{lemm:nonlin-ndim} with $p=q$ and $r=\infty$ that
\begin{align}\label{u-v}
    \begin{split}
    \Phi[u](t) - \Phi[v](t)
    ={}&
    -\int_{-\infty}^t
    e^{(t-\tau)\Delta}
    \mathbb{P}\div(u(\tau) \otimes (u(\tau)-v(\tau)))d\tau\\
    &-\int_{-\infty}^t
    e^{(t-\tau)\Delta}
    \mathbb{P}\div((u(\tau)-v(\tau)) \otimes v(\tau))d\tau,
    \end{split}
\end{align}
which and Lemma \ref{lemm:nonlin-ndim} with $p=q$ and $r=\infty$ imply 
\begin{align}
    &
    \n{\Phi[u] - \Phi[v]}_{\widetilde{L^{\infty}}(\mathbb{R};\dB_{p,\sigma}^{\frac{n}{p}-1}(\mathbb{R}^n))}\\
    &\quad
    \leq
    C_1\sp{
    \n{u}_{\widetilde{L^{\infty}}(\mathbb{R};\dB_{p,\sigma}^{\frac{n}{p}-1}(\mathbb{R}^n))}
    +
    \n{v}_{\widetilde{L^{\infty}}(\mathbb{R};\dB_{p,\sigma}^{\frac{n}{p}-1}(\mathbb{R}^n))}
    }
    \n{u - v}_{\widetilde{L^{\infty}}(\mathbb{R};\dB_{p,\sigma}^{\frac{n}{p}-1}(\mathbb{R}^n))}\\
    &\quad 
    \leq 
    4C_1^2
    \n{f}_{\widetilde{L^{\infty}}(\mathbb{R};\dB_{p,\sigma}^{\frac{n}{p}-1}(\mathbb{R}^n))}
    \n{u - v}_{\widetilde{L^{\infty}}(\mathbb{R};\dB_{p,\sigma}^{\frac{n}{p}-1}(\mathbb{R}^n))}\\
    &\quad
    \leq 
    \frac{1}{4}
    \n{u - v}_{\widetilde{L^{\infty}}(\mathbb{R};\dB_{p,\sigma}^{\frac{n}{p}-1}(\mathbb{R}^n))}.
\end{align}
Hence, $\Phi[\cdot]$ is a contraction map on $S_{p,\sigma}$ and the Banach fixed point theorem implies there exists a unique $\up \in S_{p,\sigma}$ such that $\up=\Phi[\up]$, which yields a $T$-periodic mild solution satisfying \eqref{apriori}.

For the uniqueness of $T$-periodic solutions, let us assume a $T$-periodic external force generates two $T$-periodic mild solutions $\up$ and $\vp$ to \eqref{eq:NS_P} in the following class:
\begin{align}
    \mp{
    u \in \widetilde{C}(\mathbb{R};\dB_{p,\sigma}^{\frac{n}{p}-1}(\mathbb{R}^n))
    \ ; \ 
    \| u \|_{\widetilde{L^{\infty}}(\mathbb{R};\dB_{p,\sigma}^{\frac{n}{p}-1}(\mathbb{R}^n))} \leq \frac{1}{4C_1}
    }.
\end{align}
Then, using the similar observation as in \eqref{u-v},
we have
\begin{align}
    &
    \n{\up - \vp}_{\widetilde{L^{\infty}}(\mathbb{R};\dB_{p,\sigma}^{\frac{n}{p}-1}(\mathbb{R}^n))}\\
    &\quad
    \leq
    C_1
    \n{\up}_{\widetilde{L^{\infty}}(\mathbb{R};\dB_{p,\sigma}^{\frac{n}{p}-1}(\mathbb{R}^n))}
    \n{\up - \vp}_{\widetilde{L^{\infty}}(\mathbb{R};\dB_{p,\sigma}^{\frac{n}{p}-1}(\mathbb{R}^n))}\\
    &\qquad
    +
    C_1
    \n{\vp}_{\widetilde{L^{\infty}}(\mathbb{R};\dB_{p,\sigma}^{\frac{n}{p}-1}(\mathbb{R}^n))}
    \n{\up - \vp}_{\widetilde{L^{\infty}}(\mathbb{R};\dB_{p,\sigma}^{\frac{n}{p}-1}(\mathbb{R}^n))}\\
    &\quad 
    \leq 
    \frac{1}{2}
    \n{\up - \vp}_{\widetilde{L^{\infty}}(\mathbb{R};\dB_{p,\sigma}^{\frac{n}{p}-1}(\mathbb{R}^n))},
\end{align}
which implies $\up = \vp$.
Thus, we complete the proof.
\end{proof}

\begin{proof}[Proof of Theorem \ref{thm:1-1}]
From Lemmas \ref{lemm:max-reg} and \ref{lemm:nonlin-ndim} that there exists a positive constant $C_1=C_1(n,p,q,r,\sigma)$ such that 
\begin{align}
    &
    \n{e^{t\Delta}w_0}_{\widetilde{L^{\infty}}(0,\infty;\dB_{q,\sigma}^{\frac{n}{q}-1}(\mathbb{R}^n)) \cap \widetilde{L^r}(0,\infty;\dB_{q,\sigma}^{\frac{n}{q}-1+\frac{2}{r}}(\mathbb{R}^n))}
    \leq{}
    C_1
    \n{w_0}_{\dB_{q,\sigma}^{\frac{n}{q}-1}(\mathbb{R}^n)},\\
    &
    \n{
    \int_{t_0}^t
    e^{(t-\tau)\Delta}
    \mathbb{P}\div(u(\tau) \otimes v(\tau))d\tau
    }_{\widetilde{L^{\infty}}(0,\infty;\dB_{q,\sigma}^{\frac{n}{q}-1}(\mathbb{R}^n)) \cap \widetilde{L^r}(0,\infty;\dB_{q,\sigma}^{\frac{n}{q}-1+\frac{2}{r}}(\mathbb{R}^n))}\\
    &
    \qquad
    \leq{}
    C_1
    \n{u}_{\widetilde{L^{\infty}}(\mathbb{R};\dB_{p,\sigma}^{\frac{n}{p}-1}(\mathbb{R}^n))}
    \n{v}_{\widetilde{L^r}(0,\infty;\dB_{q,\sigma}^{\frac{n}{q}-1+\frac{2}{r}}(\mathbb{R}^n))},\\
    &\n{
    \int_{t_0}^t
    e^{(t-\tau)\Delta}
    \mathbb{P}\div(v(\tau) \otimes w(\tau))d\tau
    }_{\widetilde{L^{\infty}}(0,\infty;\dB_{q,\sigma}^{\frac{n}{q}-1}(\mathbb{R}^n)) \cap \widetilde{L^r}(0,\infty;\dB_{q,\sigma}^{\frac{n}{q}-1+\frac{2}{r}}(\mathbb{R}^n))}\\
    &\qquad
    \leq{}
    C_2
    \n{v}_{\widetilde{L^{\infty}}(0,\infty;\dB_{q,\sigma}^{\frac{n}{q}-1}(\mathbb{R}^n)) }
    \n{w}_{\widetilde{L^r}(0,\infty;\dB_{q,\sigma}^{\frac{n}{q}-1+\frac{2}{r}}(\mathbb{R}^n))}
\end{align}
for all 
$w_0 \in \dB_{q,\sigma}^{\frac{n}{p}-1}(\mathbb{R}^n)$, 
$u \in \widetilde{L^{\infty}}(I;\dB_{p,\sigma}^{\frac{n}{p}-1}(\mathbb{R}^n))$ 
and 
$v,w \in \widetilde{L^{\infty}}(0,\infty;\dB_{q,\sigma}^{\frac{n}{q}-1}(\mathbb{R}^n)) \cap \widetilde{L^r}(0,\infty;\dB_{q,\sigma}^{\frac{n}{q}-1+\frac{2}{r}}(\mathbb{R}^n))$.
We assume that the time-periodic solution $\up$ and the initial disturbance $w_0 \in \dB_{q,\sigma}^{\frac{n}{q}-1}(\mathbb{R}^n)$ satisfy
\begin{align}
    \n{\up}_{\widetilde{L^{\infty}}(\mathbb{R};\dB_{p,\sigma}^{\frac{n}{p}-1}(\mathbb{R}^n))}
    \leq
    \frac{1}{8C_1^2},\qquad
    \n{w_0}_{\dB_{q,\sigma}^{\frac{n}{q}-1}(\mathbb{R}^n)}
    \leq
    \frac{1}{8C_1^2}
    {.}
\end{align}
To construct a mild solution to \eqref{eq:perturbed}, we consider a map
\begin{align}
    \Psi[w](t)
    :=
    e^{t\Delta}w_0
    &
    -
    \int_{0}^t
    e^{(t-\tau)\Delta} \mathbb{P}\div (\up(\tau) \otimes w(\tau) + w(\tau) \otimes \up(\tau)) d\tau\\
    &
    -
    \int_0^{t}
    e^{(t-\tau)\Delta} \mathbb{P}\div (w(\tau) \otimes w(\tau)) d\tau.
\end{align}
{We define}
the complete metric space $(S_{q,\sigma}^r, d_{S_{q,\sigma}^r})$ {by}
\begin{align}
    &
    S_{q,\sigma}^r
    :=
    \mp{
    \begin{aligned}
    w \in 
    {}&
    \widetilde{C}([0,\infty);\dB_{q,\sigma}^{\frac{n}{q}-1}(\mathbb{R}^n)) 
    \cap 
    \widetilde{L^r}(0,\infty;\dB_{q,\sigma}^{\frac{n}{q}-1+\frac{2}{r}}(\mathbb{R}^n))
    \ ; \\
    &
    \n{w}_{\widetilde{L^{\infty}}(0,\infty;\dB_{q,\sigma}^{\frac{n}{q}-1}(\mathbb{R}^n)) \cap \widetilde{L^r}(0,\infty;\dB_{q,\sigma}^{\frac{n}{q}-1+\frac{2}{r}}(\mathbb{R}^n))}
    \leq 
    2C_1\n{w_0}_{\dB_{q,\sigma}^{\frac{n}{q}-1}(\mathbb{R}^n)}
    \end{aligned}},\\
    &
    d_{S_{q,\sigma}^r}(w,\widetilde{w})
    :=
    \n{w-\widetilde{w}}_{\widetilde{L^{\infty}}(0,\infty;\dB_{q,\sigma}^{\frac{n}{q}-1}(\mathbb{R}^n)) \cap \widetilde{L^r}(0,\infty;\dB_{q,\sigma}^{\frac{n}{q}-1+\frac{2}{r}}(\mathbb{R}^n))}.
\end{align}
Then, for any $w, \widetilde{w} \in S_{q,\sigma}^r$, there holds
\begin{align}
    &
    \n{\Psi[w]}_{\widetilde{L^{\infty}}(0,\infty;\dB_{q,\sigma}^{\frac{n}{q}-1}(\mathbb{R}^n)) \cap \widetilde{L^r}(0,\infty;\dB_{q,\sigma}^{\frac{n}{q}-1+\frac{2}{r}}(\mathbb{R}^n))}\\
    &\quad\leq{}
    C_1\n{w_0}_{\dB_{q,\sigma}^{\frac{n}{q}-1}(\mathbb{R}^n)}\\
    &\qquad
    +
    2C_1
    \n{\up}_{\widetilde{L^{\infty}}(\mathbb{R};\dB_{p,\sigma}^{\frac{n}{p}-1}(\mathbb{R}^n))}
    \n{w}_{\widetilde{L^r}(0,\infty;\dB_{q,\sigma}^{\frac{n}{q}-1+\frac{2}{r}}(\mathbb{R}^n))}\\
    &\qquad
    +
    C_1
    \n{w}_{\widetilde{L^{\infty}}(0,\infty;\dB_{q,\sigma}^{\frac{n}{q}-1}(\mathbb{R}^n)) }
    \n{w}_{\widetilde{L^r}(0,\infty;\dB_{q,\sigma}^{\frac{n}{q}-1+\frac{2}{r}}(\mathbb{R}^n))}\\
    &\quad
    \leq{}
    C_1\n{w_0}_{\dB_{q,\sigma}^{\frac{n}{q}-1}(\mathbb{R}^n)}\\
    &\qquad
    +
    4C_1^3
    \n{\up}_{\widetilde{L^{\infty}}(\mathbb{R};\dB_{p,\sigma}^{\frac{n}{p}-1}(\mathbb{R}^n))}
    \n{w_0}_{\dB_{q,\sigma}^{\frac{n}{q}-1}(\mathbb{R}^n)}
    +
    4
    C_1^3
    \sp{\n{w_0}_{\dB_{q,\sigma}^{\frac{n}{q}-1}(\mathbb{R}^n)}}^2\\
    &\quad\leq{}
    2C_1\n{w_0}_{\dB_{q,\sigma}^{\frac{n}{q}-1}(\mathbb{R}^n)}
\end{align}
and 
\begin{align}
    &\n{\Psi[w]-\Psi[\widetilde{w}]}_{\widetilde{L^{\infty}}(0,\infty;\dB_{q,\sigma}^{\frac{n}{q}-1}(\mathbb{R}^n)) \cap \widetilde{L^r}(0,\infty;\dB_{q,\sigma}^{\frac{n}{q}-1+\frac{2}{r}}(\mathbb{R}^n))}\\
    &\quad 
    \leq{}
    2C_1
    \n{\up}_{\widetilde{L^{\infty}}(\mathbb{R};\dB_{p,\sigma}^{\frac{n}{p}-1}(\mathbb{R}^n))}
    \n{w-\widetilde{w}}_{\widetilde{L^r}(0,\infty;\dB_{q,\sigma}^{\frac{n}{q}-1+\frac{2}{r}}(\mathbb{R}^n))}\\
    &\qquad
    +
    C_1
    \sp{
    \n{w}_{\widetilde{L^{\infty}}(0,\infty;\dB_{q,\sigma}^{\frac{n}{q}-1}(\mathbb{R}^n)) } 
    + 
    \n{\widetilde{w}}_{\widetilde{L^{\infty}}(0,\infty;\dB_{q,\sigma}^{\frac{n}{q}-1}(\mathbb{R}^n))}
    }
    \n{w-\widetilde{w}}_{\widetilde{L^r}(0,\infty;\dB_{q,\sigma}^{\frac{n}{q}-1+\frac{2}{r}}(\mathbb{R}^n))}\\
    &\quad
    \leq{}
    2C_1^2
    \sp{
    \n{\up}_{\widetilde{L^{\infty}}(\mathbb{R};\dB_{p,\sigma}^{\frac{n}{p}-1}(\mathbb{R}^n))}
    +
    \n{w_0}_{\dB_{q,\sigma}^{\frac{n}{q}-1}(\mathbb{R}^n)}}
    \n{w-\widetilde{w}}_{\widetilde{L^r}(0,\infty;\dB_{q,\sigma}^{\frac{n}{q}-1+\frac{2}{r}}(\mathbb{R}^n))}\\
    &\quad 
    \leq{}
    \frac{1}{2}
    \n{w-\widetilde{w}}_{\widetilde{L^{\infty}}(0,\infty;\dB_{q,\sigma}^{\frac{n}{q}-1}(\mathbb{R}^n)) \cap \widetilde{L^r}(0,\infty;\dB_{q,\sigma}^{\frac{n}{q}-1+\frac{2}{r}}(\mathbb{R}^n))},
\end{align}
which implies that $\Psi[\cdot]$ is a contraction map on $(S_{q,\sigma}^r,d_{S_{q,\sigma}^r})$.
Hence, it follows from the Banach fixed point theorem that there exists a unique $w \in S_{q,\sigma}^r$ such that $w =\Psi[w]$, which yields a mild solution to \eqref{eq:perturbed}.
The uniqueness in $\widetilde{C}([0,\infty);\dB_{q,\sigma}^{\frac{n}{q}-1}(\mathbb{R}^n)) \cap \widetilde{L^r}(0,\infty;\dB_{q,\sigma}^{\frac{n}{q}-1+\frac{2}{r}}(\mathbb{R}^n))$ is a straightforward argument.

Finally, we show \eqref{lim}.
Let $T'>T>0$.
It holds 
\begin{align}
    w(t)
    =
    e^{(t-T)\Delta}w(T)
    &
    -
    \int_T^t
    e^{(t-\tau)\Delta} \mathbb{P}\div (\up(\tau) \otimes w(\tau) + w(\tau) \otimes \up(\tau)) d\tau\\
    &
    -
    \int_T^t
    e^{(t-\tau)\Delta} \mathbb{P}\div (w(\tau) \otimes w(\tau)) d\tau
\end{align}
for $t>T$.
Then, similarly as above, we have 
\begin{align}
    \n{w}_{\widetilde{L^{\infty}}(T',\infty;\dB_{q,\sigma}^{\frac{n}{q}-1}(\mathbb{R}^n))}
    \leq{}&
    \n{e^{(t-T)\Delta}w(T)}_{\widetilde{L^{\infty}}(T',\infty;\dB_{q,\sigma}^{\frac{n}{q}-1}(\mathbb{R}^n))}\\
    &
    +
    2C_1
    \n{\up}_{\widetilde{L^{\infty}}(\mathbb{R};\dB_{p,\sigma}^{\frac{n}{p}-1}(\mathbb{R}^n))}
    \n{w}_{\widetilde{L^{r}}(T,\infty;\dB_{q,\sigma}^{\frac{n}{q}-1+\frac{2}{r}}(\mathbb{R}^n))}\\
    &
    +
    C_1
    \n{w}_{\widetilde{L^{\infty}}(0,\infty;\dB_{q,\sigma}^{\frac{n}{q}-1}(\mathbb{R}^n))}
    \n{w}_{\widetilde{L^r}(T,\infty;\dB_{q,\sigma}^{\frac{n}{q}-1+\frac{2}{r}}(\mathbb{R}^n))},
\end{align}
which implies 
{
\begin{align}
    \n{w}_{\widetilde{L^{\infty}}(T',\infty;\dB_{q,\sigma}^{\frac{n}{q}-1}(\mathbb{R}^n))}
    \leq{}&
    C
    \n{e^{(t-T)\Delta}w(T)}_{\widetilde{L^{\infty}}(T',\infty;\dB_{q,\sigma}^{\frac{n}{q}-1}(\mathbb{R}^n))}\\
    &+
    C
    \n{w}_{\widetilde{L^r}(T,\infty;\dB_{q,\sigma}^{\frac{n}{q}-1+\frac{2}{r}}(\mathbb{R}^n))}.
\end{align}
Thus, we have}
\begin{align}
    \n{w(T')}_{\dB_{q,\sigma}^{\frac{n}{q}-1}(\mathbb{R}^n)}
    \leq{}
    &
    \n{w}_{\widetilde{L^{\infty}}(T',\infty;\dB_{q,\sigma}^{\frac{n}{q}-1}(\mathbb{R}^n))}
    \\
    \leq{}&
    C
    \mp{
    \sum_{j \in \mathbb{Z}}
    \sp{
    e^{-c2^{2j}(T'-T)}
    2^{(\frac{n}{q}-1)j}
    \n{\Delta_j w(T)}_{L^q}
    }^{\sigma}
    }^{\frac{1}{\sigma}}
    \\
    &
    +
    C
    \mp{
    \sum_{j \in \mathbb{Z}}
    \sp{
    2^{(\frac{n}{q}-1+\frac{2}{r})j}
    \n{\Delta_j w}_{L^r(T,\infty;L^q)}
    }^{\sigma}
    }^{\frac{1}{\sigma}}.
\end{align}
Hence, letting $T' \to \infty$ and then letting $T \to \infty$, 
we complete the proof.
\end{proof}

\section{Two-dimensional analysis: Proof of Theorem \ref{thm:2}}\label{sec:2D}
The goal of this section is to prove Theorem \ref{thm:2}.
To this end, we investigate some properties of the following initial value problem of the Navier--Stokes equations:
\begin{align}\label{eq:NS_I}
    \begin{cases}
        \partial_t u - \Delta u + (u \cdot \nabla)u + \nabla p = f, \qquad & t > t_0, x \in \mathbb{R}^2,\\
        \div u = 0,\qquad & t \geq t_0, x \in \mathbb{R}^2,\\
        u(t_0,x) = a(x), \qquad & t=t_0, x \in \mathbb{R}^2,
    \end{cases}
\end{align}
where $t_0 \in \mathbb{R}$ is a given initial time and $a=a(x)$ is a given initial data. 
We say that $v$ is a mild solution to \eqref{eq:NS_I} if it satisfies
\begin{align}\label{eq:IE_u}
    u(t) 
    = 
    e^{(t-t_0)\Delta}a
    + 
    \int_{t_0}^t e^{(t-\tau)\Delta}{\mathbb{P}f}(\tau)d\tau
    -
    \int_{t_0}^t e^{(t-\tau)\Delta}\mathbb{P}\div(u(\tau)\otimes u(\tau))d\tau.
\end{align}

\subsection{Construction of non-periodic in time mild solutions}
The aim of this subsection is to show that for any initial data, there exists a mild solution to \eqref{eq:NS_I} that is not $T$-periodic if we choose an appropriate {time periodic} external force.
More precisely, we prove the following proposition.
\begin{prop}\label{prop:non-per}
Let $1 \leq p \leq 2$.
Then, there exists a positive constant $\varepsilon_0=\varepsilon_0(p)$ such that the following statement holds. 
For any $0 < \delta \leq  \varepsilon_0$, and $0<T\leq 2^{\frac{1}{\delta^2}}$, 
there exist a $T$-periodic external force 
$f_{\delta} \in \widetilde{C}(\mathbb{R};\dB_{p,1}^{\frac{2}{p}-3}(\mathbb{R}^2))$ with 
\begin{align}
    \n{f_{\delta}}_{\widetilde{L^{\infty}}(\mathbb{R};\dB_{p,1}^{\frac{2}{p}-3}(\mathbb{R}^2))} \leq \delta
\end{align}
and
a $k_{\delta,T} \in \mathbb{N}$
such that
for
any {initial time}
$t_0 \in \mathbb{R}$
and
initial data
$a \in \dB_{2,1}^0(\mathbb{R}^2)$ with 
\begin{align}\label{a:small}
    \div a = 0,\qquad
    \n{a}_{\dB_{2,1}^0(\mathbb{R}^2)} \leq \varepsilon_0,
\end{align}
\eqref{eq:NS_I} 
possesses
a mild solution $u_{\delta}[a] \in \widetilde{C}([t_0,t_0+k_{\delta,T}T];\dB_{2,1}^0(\mathbb{R}^2))$
satisfying 
\begin{align}\label{est:u}
    \n{u_{\delta}[a](t_0+k_{\delta,T}T)}_{\dB_{2,1}^0(\mathbb{R}^2)} \geq 2\varepsilon_0.
\end{align}
\end{prop}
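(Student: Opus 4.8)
The plan is to exhibit an explicit time-periodic, high-frequency forcing whose own quadratic self-interaction pumps a fixed low-frequency mode that, being almost undamped by the heat flow, inflates additively over many periods until its $\dB_{2,1}^0$-norm exceeds any prescribed threshold, while the forcing itself stays of size $\delta$ and the response to the initial datum $a$ remains harmless. Concretely, I would take $f_{\delta}$ to be an explicit frequency-localized field (displayed in \eqref{ex:f}): a $T$-periodic profile in $t$ times a divergence-free vector field whose Fourier support lies in a thin shell $|\xi|\sim M$, built from two nearby wave numbers (the anisotropic $\cos(Mx_1)$ structure) so that the product of the two modes has a nonzero output at a small frequency $|\xi|\sim\kappa$. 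The spatial frequency $M=M(\delta)$, the beat frequency $\kappa$, and the number of periods $k_{\delta,T}\in\mathbb N$ are the free parameters; since $f_\delta$ is frequency-localized, normalizing so that $\n{f_\delta}_{\widetilde{L^\infty}(\mathbb R;\dB_{p,1}^{\frac2p-3}(\mathbb R^2))}\le\delta$ is a one-shell computation in which the weight $M^{\frac2p-3}$ simply fixes the physical amplitude (this is the only place $1\le p\le2$ enters).

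First I would record the linear and bilinear bookkeeping. Writing the mild solution through \eqref{eq:IE_u} and splitting $u=u^{(0)}+u^{(1)}+R$ with
\begin{align}
u^{(0)}(t)=e^{(t-t_0)\Delta}a+\int_{t_0}^t e^{(t-\tau)\Delta}\mathbb Pf_\delta(\tau)\,d\tau,\qquad u^{(1)}(t)=-\int_{t_0}^t e^{(t-\tau)\Delta}\mathbb P\div\big(u^{(0)}(\tau)\otimes u^{(0)}(\tau)\big)\,d\tau,
\end{align}
I would use Lemma \ref{lemm:max-reg} to bound $u^{(0)}$ in $\widetilde{L^\infty}((t_0,t_0+k_{\delta,T}T);\dB_{2,1}^0(\mathbb R^2))$ by $\n{a}_{\dB_{2,1}^0}+\n{f_\delta}_{\widetilde{L^\infty}\dB_{2,1}^{-2}}\lesssim\varepsilon_0$ uniformly in $t_0,k_{\delta,T}$ (the heat semigroup is a contraction, and the forced part equilibrates because it lives at frequency $M$). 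Note that the clean critical bilinear estimate fails in two dimensions—this is precisely the source of the inflation—so I would not run a small-norm contraction for the full solution; instead $R$ is obtained as the fixed point of the equation it satisfies, controlled by Lemma \ref{lemm:nonlin-ndim} in the admissible two-dimensional range $2<r<\infty$ (with $p=q=2,\ \sigma=1,\ r_1=\infty$), whose constant is uniform in the interval. The error estimate closes because the residual of the approximate solution $u^{(0)}+u^{(1)}$ is small on $[t_0,t_0+k_{\delta,T}T]$ once the parameters are tuned; it is this simultaneous requirement—smallness of the error over all $k_{\delta,T}$ periods together with the inflation budget below—that forces the admissible window $0<T\le 2^{1/\delta^2}$.

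The core is the lower bound. I would isolate the frequency-$\kappa$ part of $u^{(1)}$: its source $\mathbb P\div(u^{(0)}\otimes u^{(0)})$ contains a component at $|\xi|\sim\kappa$ of size $\gtrsim c\,\kappa\,\delta^2$ coming from the beating of the two forcing modes, and this source is present during every one of the $k_{\delta,T}$ periods because $f_\delta$ is $T$-periodic. Since $\kappa$ is chosen so small that $\kappa^2 k_{\delta,T}T\ll1$, the heat flow barely dissipates this mode, so the contributions add up and the frequency-$\kappa$ part of $u^{(1)}(t_0+k_{\delta,T}T)$ has $\dB_{2,1}^0$-norm $\gtrsim c\,\kappa\,\delta^2\,k_{\delta,T}T$; choosing $k_{\delta,T}$ accordingly makes this exceed $3\varepsilon_0$. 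The decisive point for uniformity in $a$ is that $a$ cannot sustain this accumulation: its low-frequency part beats with the frequency-$M$ field only into high frequencies, while its high-frequency part (the only part that could feed frequency $\kappa$ against the forcing) is damped like $e^{-M^2(t-t_0)}$ and hence contributes only a single, non-accumulating kick of size $\lesssim\varepsilon_0\delta M^{-2}$. Together with $\n{e^{(\cdot-t_0)\Delta}a}_{\widetilde{L^\infty}\dB_{2,1}^0}\le\n{a}_{\dB_{2,1}^0}\le\varepsilon_0$ and the smallness of $R$, the triangle inequality then yields $\n{u_\delta[a](t_0+k_{\delta,T}T)}_{\dB_{2,1}^0}\ge 3\varepsilon_0-\varepsilon_0-o(1)\ge2\varepsilon_0$, which is \eqref{est:u}; uniformity in $t_0$ is immediate since all bounds use only the $T$-periodicity of $f_\delta$ and translation-invariant norms.

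The step I expect to be the main obstacle is making the forced self-interaction dominate uniformly over the noncompact admissible set $\{\div a=0,\ \n{a}_{\dB_{2,1}^0}\le\varepsilon_0\}$: an adversarial $a$ concentrated near frequency $M$ could a priori feed the same low-frequency mode at a comparable instantaneous rate $\sim\varepsilon_0\delta$, and only the asymmetry that the forcing is \emph{persistent} (acting at full strength every period) while the relevant high-frequency part of $a$ is \emph{instantaneously} dissipated rescues the argument. Pinning down the quantitative budget—choosing $M,\kappa,k_{\delta,T}$ so that the accumulated inflation reaches $3\varepsilon_0$, the one-time datum contribution stays below $\varepsilon_0$, the forcing norm stays below $\delta$, and the error $R$ stays controlled, all at once—is the technical heart and is what produces the range $0<T\le 2^{1/\delta^2}$.
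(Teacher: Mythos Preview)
Your overall strategy---explicit frequency-localized forcing, inflation via the quadratic self-interaction at low frequencies, perturbative treatment of the initial datum---matches the paper's. But the remainder estimate as you describe it does not close, and this is the technical heart of the proof.

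You propose to control $R$ via Lemma~\ref{lemm:nonlin-ndim} with $n=p=q=2$, $\sigma=1$, and some fixed $2<r<\infty$. The constant in that lemma is indeed uniform in $I$, but the \emph{inputs} are not: to run the contraction for $R$ you need the $\widetilde{L^r}(I_\delta;\dB_{2,1}^{2/r})$-norm of the forced linear flow, whose leading piece is the time-independent function $-\eta\delta g$ (the stationary response to the $\Delta g$ part of $f_\delta$). Over an interval of length $|I_\delta|\sim 2^{1/\delta^2}$ its $\widetilde{L^r}$-norm picks up a factor $|I_\delta|^{1/r}\sim 2^{1/(r\delta^2)}$, which blows up as $\delta\downarrow0$ for every fixed $r$. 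Hence neither the source terms nor the linear-in-$R$ coefficients stay small, and no contraction is available. This is exactly the obstruction the paper flags when it says the estimate \eqref{est:imp} fails with $X=\widetilde{L^\infty}(I_\delta;\dB_{2,1}^0)$; passing to a fixed finite $r$ merely moves the divergence elsewhere.

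The paper's remedy is to let the auxiliary integrability index depend on $\delta$: it works with the norm
\begin{align}
\N{v}_{\delta,I}=\n{v}_{\widetilde{L^\infty}(I;\dB_{2,1}^0)}+\frac{1}{\delta}\,\n{v}_{\widetilde{L^{2/\delta^2}}(I;\dot H^{\delta^2})},
\end{align}
so that $|I_\delta|^{\delta^2/2}\le 2$ is uniformly bounded (this is what produces the window $T\le 2^{1/\delta^2}$). The price is that the bilinear constant now depends on $r=2/\delta^2$; Lemma~\ref{lemm:para'} is proved precisely to show that the remainder-paraproduct constant scales like $s^{-1/\sigma}$ rather than $s^{-1}$, so that after inserting the $1/\delta$ weight the bilinear estimate (Lemma~\ref{lemm:XYZ}) closes with a constant \emph{independent of $\delta$}. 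You need an argument at this level; a fixed-$r$ application of Lemma~\ref{lemm:nonlin-ndim} does not suffice.

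Two secondary points. First, the paper places $e^{(t-t_0)\Delta}a$ in the remainder $\widetilde u$, not in the first iterate; this makes the second iterate independent of $a$, so the lower bound is automatically uniform over all admissible $a$ and your ``adversarial $a$ near frequency $M$'' worry disappears. The $a$-dependent part of $\widetilde u$ is then controlled in the smoothing norm $\widetilde{L^4}(I_\delta;\dB_{2,1}^{1/2})$ via Lemma~\ref{lemm:XYZ}~(2), which is uniform in $|I_\delta|$. Second, the inflation is not a single beat frequency $\kappa$ accumulating linearly in $t$; rather, $g\otimes g$ has a low-frequency output $\sim M^2\partial_{x_2}(\psi^2)$ spread over \emph{all} low dyadic shells, and the $\dB_{2,1}^0$-norm at the terminal time collects the $\sim 1/\delta^2$ shells $-\tfrac{1}{2\delta^2}\le j\le -2$ on which $1-e^{-2^{2j}k_{\delta,T}T}\gtrsim 1$, each contributing $\sim \eta^2\delta^2 M^2$, to give a lower bound $\sim M^2\eta^2$ independent of $\delta$.
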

\begin{rem}
    By \eqref{a:small} and \eqref{est:u} we see that 
    \begin{align}
        u_{\delta}[a](t_0) \neq u_{\delta}[a](t_0 + k_{\delta,T}T),
    \end{align}
    which implies the solution $u_{\delta}[a]$ is never $T$-periodic, 
    regardless of the choice of small initial data $a\in \dB_{2,1}^0(\mathbb{R}^2)$.
\end{rem}
Before starting the proof of Proposition \ref{prop:non-per}, we mention the idea and outline of it.
In order to obtain \eqref{est:u}, we shall follow the method used in the context of ill-posedness \cites{Bou-Pav-08,Yon-10,Wan-15,Fujii-pre-2,Bej-Tao-06} and
{we deduce the lower bound estimate by the second iteration of the standard successive approximation system.
To this end, we} decompose a solution $u$ of \eqref{eq:NS_I} into the first iteration, the second iteration, and the remainder part:
\begin{align}
    u(t) ={} u^{(1)}(t) + u^{(2)}(t) + \widetilde{u}(t),
\end{align}
where $u^{(1)}$ and $u^{(2)}$ solve the first iterative system
\begin{align}
    &
    \begin{cases}
        \partial_t u^{(1)} - \Delta u^{(1)} + \nabla p^{(1)} = f_{\delta}, \qquad & t > t_0,x \in \mathbb{R}^2,\\
        \div u^{(1)} = 0, & t \geq t_0,x \in \mathbb{R}^2,\\
        u^{(1)}(t_0,x) = 0, & x \in \mathbb{R}^2,
    \end{cases}
\end{align}
and the second iterative system
\begin{align}
    \begin{cases}
        \partial_t u^{(2)} - \Delta u^{(2)} + (u^{(1)} \cdot \nabla)u^{(1)} + \nabla p^{(2)} = 0, \qquad & t > t_0,x \in \mathbb{R}^2,\\
        \div u^{(2)} = 0, & t \geq t_0,x \in \mathbb{R}^2,\\
        u^{(2)}(t_0,x) = 0, & x \in \mathbb{R}^2,
    \end{cases}
\end{align}
respectively,
and the remainder $\widetilde{u}$ should be a solution to 
\begin{align}
    \begin{cases}
    \begin{aligned}
    \partial_t \widetilde{u} - \Delta \widetilde{u}
    &
    +(u^{(1)} \cdot \nabla)u^{(2)}
    +(u^{(2)} \cdot \nabla)u^{(1)}
    +(u^{(2)} \cdot \nabla)u^{(2)}\\
    &
    +(u^{(1)} \cdot \nabla)\widetilde{u}
    +(u^{(2)} \cdot \nabla)\widetilde{u}
    +(\widetilde{u} \cdot \nabla)u^{(1)}
    +(\widetilde{u} \cdot \nabla)u^{(2)}\\
    &
    +(\widetilde{u} \cdot \nabla)\widetilde{u}
    +\nabla \widetilde{p} = 0, 
    \end{aligned}
    &t>t_0,x \in \mathbb{R}^2,\\
    \div \widetilde{u}=0, & t \geq t_0,x \in \mathbb{R}^2,\\
    \widetilde{u}(t_0,x) = a(x), & x \in \mathbb{R}^2.
    \end{cases}
\end{align}
Note that we regard the linear term $e^{(t-t_0)\Delta}a$ as not a part of the first iteration $u^{(1)}$ but a piece of the remainder $\widetilde{u}$; this allows us to obtain the lower-bound estimate for the second iteration $u^{(2)}$ with arbitrariness in the choice of the initial data $a$ since $u^{(2)}$ is independent of $a$.
For sufficiently small $0 < \delta \ll 1$,
choosing 
a suitable interval $I_{\delta}=[t_0,t_0+k_{\delta,T}T]$ with some large $k_{\delta,T} \in \mathbb{N}$ 
and 
a suitable external force $f_{\delta}$ with $\n{f_{\delta}}_{\widetilde{L^{\infty}}(\mathbb{R};\dB_{p,1}^{\frac{2}{p}-3}(\mathbb{R}^2))} \leq \delta$, 
we have 
\begin{align}
    &
    \n{u^{(1)}}_{\widetilde{L^{\infty}}(I_{\delta};\dB_{2,1}^{0}(\mathbb{R}^2))} \leq C \delta,
    \qquad
    \n{u^{(2)}(t_0+k_{\delta,T}T)}_{\dB_{2,1}^{0}(\mathbb{R}^2)} \geq cM^2
\end{align}
for some positive constant $M \gg 1$ independent of $\delta$.
Hence, the essential part of the proof is to construct the remainder $\widetilde{u}$.
However, since
the estimate
\begin{align}\label{est:imp}
    \n{
    \int_{t_0}^t e^{(t-\tau)\Delta} \mathbb{P}\div (v(\tau) \otimes w(\tau))d\tau
    }_{X}
    \leq{}
    C
    \n{v}_{X}
    \n{w}_{X}
\end{align}
fails with $X=\widetilde{L^{\infty}}(I_{\delta};\dB_{2,1}^{0}(\mathbb{R}^2))$, 
ingenuity is needed to achieve the objectives.
To overcome this, we define a norm 
\begin{align}
    \N{v}_{\delta,I}
    :=
    \n{v}_{\widetilde{L^{\infty}}(I;\dB_{2,1}^{0}(\mathbb{R}^2))}
    +
    \frac{1}{\delta}
    \n{v}_{\widetilde{L^{\frac{2}{\delta^2}}}(I;\dot{H}^{\delta^2}(\mathbb{R}^2))}
\end{align}
for all 
$0<\delta \leq 1/4$, 
intervals $I \subset \mathbb{R}$, 
and 
$v \in \widetilde{L^{\infty}}(I;\dB_{2,1}^{0}(\mathbb{R}^2)) \cap \widetilde{L^{\frac{2}{\delta^2}}}(I;\dot{H}^{\delta^2}(\mathbb{R}^2))$.
Then, we obtain the estimate \eqref{est:imp} with the norm replaced by $\N{\cdot}_{\delta,I}$ and the constant $C$ independent of $\delta$.
See Lemma \ref{lemm:XYZ} below.
Then, choosing the initial data $a$ so small that $\n{a}_{\dB_{2,1}^0(\mathbb{R}^2)} \leq \varepsilon_0$ 
with sufficiently small $0<\varepsilon_0 \ll 1$ independent of $\delta$
and
using the contraction mapping principle via the norm $\N{\cdot}_{\delta,I_{\delta}}$,
we may construct the the remainder part $\widetilde{u}$ satisfying 
\begin{align}
    \n{\widetilde{u}}_{\widetilde{L^{\infty}}(I_{\delta};\dB_{2,1}^{0}(\mathbb{R}^2))} \leq C\varepsilon_0.
\end{align}
Hence collecting the above estimates and for $0<\delta\leq\varepsilon_0$ and sufficiently large $M\gg 1$, 
we have 
\begin{align}
    &\n{u(t_0+k_{\delta,T}T)}_{\dB_{2,1}^0(\mathbb{R}^2)}\\
    &\quad\geq {}
    \n{u^{(2)}(t_0+k_{\delta,T}T)}_{\dB_{2,1}^{0}(\mathbb{R}^2)}
    -
    \n{u^{(1)}}_{\widetilde{L^{\infty}}(I_{\delta};\dB_{2,1}^{0}(\mathbb{R}^2))}
    -
    \n{\widetilde{u}}_{\widetilde{L^{\infty}}(I_{\delta};\dB_{2,1}^{0}(\mathbb{R}^2))}\\
    &\quad\geq {}
    cM^2
    -
    C\delta
    -
    C\varepsilon_0\\
    &\quad\geq {}
    2\varepsilon_0,
\end{align}
which completes the outline of the proof.

The following lemma provide the nonlinear estimates for the norm $\N{\cdot}_{\delta,I}$.
\begin{lemm}\label{lemm:XYZ}
For an interval $I=[t_0,t_1) \subset \mathbb{R}$,
the following statements hold.
\begin{itemize}
    \item [(1)]
    There exists an absolute positive constant $C$ such that 
    \begin{align}
        \begin{aligned}\label{nonlin_est:X}
        \N{\int_{t_0}^t e^{(t-\tau)\Delta} \mathbb{P}\div (u(\tau) \otimes v(\tau))d\tau}_{\delta,I}
        \leq{}
        C
        \N{u}_{\delta,I}\N{v}_{\delta,I}
        \end{aligned}
    \end{align}
    for all $0 \leq \delta \leq 1/4$ and $u,v \in \widetilde{L^{\infty}}(I;\dB_{2,1}^{0}(\mathbb{R}^2)) \cap \widetilde{L^{\frac{2}{\delta^2}}}(I;\dot{H}^{\delta^2}(\mathbb{R}^2))$.
    \item [(2)]
    There exists an absolute positive constant $C$ such that
    \begin{align}\label{nonlin_est:XYZ}
    &
    \n{\int_{t_0}^t e^{(t-\tau)\Delta} \mathbb{P}\div (u(\tau) \otimes v(\tau))d\tau}
    _{
    \widetilde{L^{\infty}}(I;\dB_{2,1}^{0}(\mathbb{R}^2)) 
    \cap 
    \widetilde{L^4}(I;\dB_{2,1}^{\frac{1}{2}}(\mathbb{R}^2))
    }\\
    &\quad
    \leq{}
    C
    \n{u}_{\widetilde{L^4}(I;\dB_{2,1}^{\frac{1}{2}}(\mathbb{R}^2))}
    \n{v}_{\widetilde{L^{\infty}}(I;\dB_{2,1}^{0}(\mathbb{R}^2))}
    \end{align}
    for all 
    $u \in \widetilde{L^4}(I;\dB_{2,1}^{\frac{1}{2}}(\mathbb{R}^2))$
    and
    $v\in \widetilde{L^{\infty}}(I;\dB_{2,1}^{0}(\mathbb{R}^2))$.
\end{itemize}
\end{lemm}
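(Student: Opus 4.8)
The plan is to reduce both statements to product estimates in Chemin--Lerner spaces by means of the maximal regularity estimate \eqref{est:max-2} of Lemma \ref{lemm:max-reg}, and then to control the products through the Bony decomposition $u\otimes v = T_u v + T_v u + R(u,v)$ together with the para-differential calculus of Appendix \ref{sec:a} (Lemma \ref{lemm:para}). Since $\mathbb{P}\div$ costs exactly one derivative, applying \eqref{est:max-2} with $p=2$ turns the operator on the left-hand side into the task of bounding $\n{u\otimes v}$ in a Chemin--Lerner space one derivative below the target; the entire difficulty then sits in a product estimate, and the bookkeeping amounts to matching the forcing exponent $r_1$ (via H\"older in time) and the frequency index to the prescribed targets.

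For (2) I would take the forcing exponent $r_1=4$, so that $u\in\widetilde{L^4}$ pairs with $v\in\widetilde{L^\infty}$, and note that \emph{both} target norms $\widetilde{L^\infty}(I;\dB_{2,1}^0)$ and $\widetilde{L^4}(I;\dB_{2,1}^{\frac12})$ follow from the single bound $\n{u\otimes v}_{\widetilde{L^4}(I;\dB_{2,1}^{-\frac12})}\leq C\n{u}_{\widetilde{L^4}(I;\dB_{2,1}^{\frac12})}\n{v}_{\widetilde{L^\infty}(I;\dB_{2,1}^0)}$. Here the two paraproducts land in $\dB_{2,1}^{-\frac12}$ using $\dB_{2,1}^{\frac12}\hookrightarrow\dB_{\infty,1}^{-\frac12}$ and $\dB_{2,1}^0\hookrightarrow\dB_{\infty,1}^{-1}$ (both negative indices, hence uniform constants), while the remainder lies in $\dB_{1,1}^{\frac12}\hookrightarrow\dB_{2,1}^{-\frac12}$ because the total regularity $\frac12$ is \emph{strictly positive}. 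This step is routine and no borderline phenomenon occurs.

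For (1) the same reduction applies, but the target regularity of $u\otimes v$ is now essentially $0$, which is precisely the two-dimensional borderline. The two paraproducts remain harmless: each carries a strictly negative frequency index, so it can be estimated with one factor in the $X$-piece $\widetilde{L^\infty}(I;\dB_{2,1}^0)$ and the other in the $Y$-piece $\widetilde{L^{2/\delta^2}}(I;\dot{H}^{\delta^2})$, and the factor $1/\delta$ coming from the output $Y$-weight is absorbed by the factor $\delta$ produced when an input is bounded through $\n{\cdot}_{\widetilde{L^{2/\delta^2}}(I;\dot H^{\delta^2})}\leq \delta\,\N{\cdot}_{\delta,I}$. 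The genuinely dangerous term is the high--high remainder $R(u,v)$, which cannot be controlled at regularity $0$ --- this is exactly the obstruction \eqref{est:imp2} --- and must be treated using the regularity $\delta^2$ supplied by the $Y$-piece. Estimating $R(u,v)$ with \emph{both} inputs in $\dot{H}^{\delta^2}$ yields the positive total regularity $2\delta^2$ needed to close, at the price of a constant blowing up as $\delta\downarrow0$; the two factors $\delta$ gained from the $Y$-bounds provide the compensation, and H\"older in time ($L^{2/\delta^2}\cdot L^{2/\delta^2}\subset L^{1/\delta^2}$) keeps the forcing exponent $r_1=1/\delta^2$ below both target exponents $\infty$ and $2/\delta^2$, so that \eqref{est:max-2} is applicable.

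The main obstacle is to render the constant \emph{uniform in} $\delta$, and this rests on a point I would isolate at the outset: the blow-up rate of the remainder estimate depends on the summability index of the target space. For the $X$-target $\dB_{2,1}^0$ one is forced to sum in $\ell^1$, the remainder constant is of size $\delta^{-2}$, and this is matched exactly by the $\delta^2$ from the two $Y$-inputs (the $X$-piece carries no weight). For the $Y$-target $\dot{H}^{\delta^2}=\dB_{2,2}^{\delta^2}$ one may instead sum in $\ell^2$, and applying Young's inequality to the frequency convolution with the \emph{kernel} in $\ell^2$ and the data in $\ell^1$ improves the constant to size $\delta^{-1}$; this single power $\delta^{-1}$ is precisely what cancels the weight $1/\delta$ in front of the $Y$-norm once the $\delta^2$ from the two inputs has been spent. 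Thus the exponents $\dot{H}^{\delta^2}$, $L^{2/\delta^2}$ and the weight $1/\delta$ defining $\N{\cdot}_{\delta,I}$ are tuned so that every contribution balances to $\delta^0$, and verifying this $\delta$-bookkeeping --- in particular extracting the improved $\delta^{-1}$ rate from the Hilbert ($\ell^2$) structure of $\dot{H}^{\delta^2}$ rather than the crude $\delta^{-2}$ --- is the heart of the argument.
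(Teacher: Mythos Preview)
Your proposal is correct and follows essentially the same approach as the paper: reduce via maximal regularity to product estimates, handle the paraproducts with uniform constants (since the negative index stays bounded away from $0$ for $\delta\leq 1/4$), and treat the remainder by placing both inputs in the $Y$-piece $\widetilde{L^{2/\delta^2}}(I;\dot H^{\delta^2})$. The crucial observation you isolate --- that the remainder constant at total regularity $2\delta^2$ scales like $(2\delta^2)^{-1/\sigma}$, so that $\ell^1$ summability for the $X$-target costs $\delta^{-2}$ while $\ell^2$ summability for the $Y$-target costs only $\delta^{-1}$ --- is exactly the content of Lemma~\ref{lemm:para'}, which the paper invokes for the remainder; your $\delta$-bookkeeping then matches the paper's line by line.
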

\begin{rem}
    We should emphasize that the positive constant $C$ appearing in \eqref{nonlin_est:X} is independent of $\delta$.
\end{rem}
\begin{proof}[Proof of Lemma \ref{lemm:XYZ}]
As (2) is obtained by Lemma \ref{lemm:nonlin-ndim}, we only prove (1).
We decompose the left hand side by the Bony decomposition as follows: 
\begin{align}
    &
    \N{\int_{t_0}^t e^{(t-\tau)\Delta} \mathbb{P}\div (u(\tau) \otimes v(\tau))d\tau}_{\delta,I}\\
    &\quad 
    \leq{}
    \N{\int_{t_0}^t e^{(t-\tau)\Delta} \mathbb{P}\div \{ T_{u_k(\tau)}v_{\ell}(\tau)+T_{v_{\ell}(\tau)}u_{k}(\tau) \}_{1 \leq k,\ell \leq 3}d\tau}_{\delta,I}\\
    &\qquad
    +
    \N{\int_{t_0}^t e^{(t-\tau)\Delta} \mathbb{P}\div \{ R(u_k(\tau),v_{\ell}(\tau)) \}_{1 \leq k,\ell \leq 3} d\tau}_{\delta,I}.
\end{align}
Here, see Appendix \ref{sec:a} for the definition of $T_fg$ and $R(f,g)$.
It follows from Lemmas \ref{lemm:max-reg} and \ref{lemm:para} that 
\begin{align}
    &
    \N{\int_{t_0}^t e^{(t-\tau)\Delta} \mathbb{P}\div \{ T_{u_k(\tau)}v_{\ell}(\tau)+T_{v_{\ell}(\tau)}u_{k}(\tau) \}_{1 \leq k,\ell \leq 3}d\tau}_{\delta,I}\\
    &\quad 
    \leq
    C
    \sum_{1 \leq k,\ell \leq 3}
    \n{T_{u_k}v_{\ell}+T_{v_{\ell}}u_{k}}_{\widetilde{L^{\infty}}(I;\dB_{2,1}^{-1}(\mathbb{R}^2))}\\
    &\qquad
    +
    \frac{C}{\delta}
    \sum_{1 \leq k,\ell \leq 3}
    \n{T_{u_k}v_{\ell}+T_{v_{\ell}}u_{k}}_{\widetilde{L^{\frac{2}{\delta^2}}}(I;\dot{H}^{\delta^2-1}(\mathbb{R}^2))}\\
    &\quad
    \leq
    C
    \n{u}_{\widetilde{L^{\infty}}(I;\dB_{2,1}^{0}(\mathbb{R}^2))}
    \n{v}_{\widetilde{L^{\infty}}(I;\dB_{2,1}^{0}(\mathbb{R}^2))}\\
    &\qquad
    +
    \frac{C}{\delta}
    \n{u}_{\widetilde{L^{\frac{2}{\delta^2}}}(I;\dot{H}^{\delta^2}(\mathbb{R}^2))}
    \n{v}_{\widetilde{L^{\infty}}(I;\dB_{2,1}^0(\mathbb{R}^2))}\\
    &\quad
    \leq 
    C
    \N{u}_{\delta,I}
    \N{v}_{\delta,I}.
\end{align}
Using Lemma \ref{lemm:para'}, we have
\begin{align}
    &
    \N{\int_{t_0}^t e^{(t-\tau)\Delta} \mathbb{P}\div \{ R(u_k(\tau),v_{\ell}(\tau)) \}_{1 \leq k,\ell \leq 3}d\tau}_{\delta,I}\\
    &\quad 
    \leq
    C
    \sum_{1 \leq k,\ell \leq 3}
    \sp{
    \n{R(u_k,v_{\ell})}_{\widetilde{L^{\frac{1}{\delta^2}}}(I;\dot{B}_{2,1}^{2\delta^2-1}(\mathbb{R}^2))}
    +
    \frac{1}{\delta}
    \n{R(u_k,v_{\ell})}_{\widetilde{L^{\frac{1}{\delta^2}}}(I;\dot{H}^{2\delta^2-1}(\mathbb{R}^2))}
    }\\
    &\quad 
    \leq
    C
    \sum_{1 \leq k,\ell \leq 3}
    \sp{
    \n{R(u_k,v_{\ell})}_{\widetilde{L^{\frac{1}{\delta^2}}}(I;\dot{B}_{1,1}^{2\delta^2}(\mathbb{R}^2))}
    +
    \frac{1}{\delta}
    \n{R(u_k,v_{\ell})}_{\widetilde{L^{\frac{1}{\delta^2}}}(I;\dB_{1,2}^{2\delta^2}(\mathbb{R}^2))}
    }\\
    &\quad 
    \leq
    \frac{C}{\delta^2}
    \n{u}_{\widetilde{L^{\frac{2}{\delta^2}}}(I;\dot{H}^{\delta^2}(\mathbb{R}^2))}
    \n{v}_{\widetilde{L^{\frac{2}{\delta^2}}}(I;\dot{H}^{\delta^2}(\mathbb{R}^2))}\\
    &\quad
    \leq 
    C
    \N{u}_{\delta,I}
    \N{v}_{\delta,I},
\end{align}
which completes the proof.
\end{proof}
Now, we provide the rigorous proof of Proposition \ref{prop:non-per}.
\begin{proof}[Proof of Proposition \ref{prop:non-per}]
Let the initial data $a \in \dB_{2,1}^0(\mathbb{R}^2)$ satisfy
\begin{align}
    \div a = 0,\qquad
    \n{a}_{\dB_{2,1}^0(\mathbb{R}^2)} 
    \leq \varepsilon.
\end{align}
Let $M \geq 10$ and $0< \varepsilon, \eta \leq 1/2$ be positive constants to be determined later.
Let $0 < \delta \leq \eta$ and $0 < T \leq 2^{\frac{1}{\delta^2}}$.
For any $\mathbb{R}^2$-valued {divergence-free} $T$-periodic function ${h} \in \widetilde{C}(\mathbb{R};\dB_{p,1}^{\frac{2}{p}-3}(\mathbb{R}^2)) \cap \widetilde{L^{\infty}}(\mathbb{R};\dB_{p,2}^{\frac{2}{p}-3+\delta^2}(\mathbb{R}^2))$ with  
\begin{align}
    \n{{h}}_{\widetilde{L^{\infty}}(\mathbb{R};\dB_{p,1}^{\frac{2}{p}-3}(\mathbb{R}^2))}
    +
    \n{{h}}_{\widetilde{L^{\infty}}(\mathbb{R};\dB_{p,1}^{\frac{2}{p}-3+\delta^2}(\mathbb{R}^2))}\leq 1,
\end{align}
we define the external force as 
\begin{align}\label{ex:f}
    \begin{split}
    f_{\delta}(t,x)
    :={}&
    \eta 
    \delta\Delta g(x) + \eta^2
    \delta 
    {h}(t,x),\\
    g(x)
    :={}&
    \nabla^{\perp}
    \left( 
    \psi(x)
    \cos(Mx_1)
    \right),
    \end{split}
\end{align}
where 
the function $\psi \in \mathscr{S}(\mathbb{R}^2)$ satisfy that $\widehat{\psi}$ is radial symmetric and 
\begin{align}
    0 \leq \widehat{\psi}(\xi) \leq 1,
    \qquad
    \widehat{\psi}(\xi)
    =
    \begin{cases}
        {1} & (|\xi| \leq 1 ),\\
        {0} & (|\xi| \geq 2).
    \end{cases}
\end{align}
We note that $f_{\delta}$ is $\mathbb{R}^3$-valued, $T$-periodic, and divergence free.
Using Lemma \ref{lemm:max-reg} and $\supp \widehat{g} \subset \{ \xi \in \mathbb{R}^2\ ;\ M-2 \leq | \xi | \leq M+2  \}$, we have
\begin{align}
    &
    \n{e^{t\Delta}a}_{\widetilde{L^{\infty}}(I_{\delta};\dB_{2,1}^{0}(\mathbb{R}^2)) \cap \widetilde{L^4}(I_{\delta};\dB_{2,1}^{\frac{1}{2}}(\mathbb{R}^2))}
    \leq
    C_0
    \n{a}_{\dB_{2,1}^0(\mathbb{R}^2)},\label{est:a}\\
    &
    \begin{aligned}
    \n{f_{\delta}}_{\widetilde{L^{\infty}}(\mathbb{R};\dB_{p,1}^{\frac{2}{p}-3}(\mathbb{R}^2))}
    \leq{}&
    C\eta\delta\n{g}_{\dB_{p,1}^{\frac{2}{p}-1}(\mathbb{R}^2)}
    +
    \eta^2\delta\n{h}_{\widetilde{L^{\infty}}(\mathbb{R};\dB_{p,1}^{\frac{2}{p}-3}(\mathbb{R}^2))}\\
    \leq{}&
    CM^{\frac{2}{p}}\n{\psi}_{L^p(\mathbb{R}^2)}\eta\delta
    +\eta^2\delta\\
    \leq{}&
    C_0M^{\frac{2}{p}}\eta\delta
    \end{aligned}
\end{align}
for some positive constant $C_0=C_0(p,\n{\psi}_{L^p(\mathbb{R}^2)})$.

We set $k_{\delta,T} \in \mathbb{N}$ and $I_{\delta} \subset \mathbb{R}$ as  
\begin{align}
    \frac{2^{\frac{1}{\delta^2}}}{T}
    \leq
    k_{\delta,T}
    <
    \frac{2^{\frac{1}{\delta^2}}}{T}+1,
    \qquad
    I_{\delta}:=[t_0,t_0+k_{\delta,T}T].
\end{align}
Here, we note that it holds 
\begin{align}\label{k_delta_T}
    2 \leq (k_{\delta,T}T)^{\delta^2} < 4.
\end{align}
We define
\begin{align}
    u_{\delta}^{(1)}(t)
    :={}&
    \int_{t_0}^t e^{(t-\tau)\Delta}\mathbb{P}{f_{\delta}}(\tau)d\tau
    =
    \int_{t_0}^t e^{(t-\tau)\Delta}{f_{\delta}}(\tau)d\tau,\\
    u_{\delta}^{(2)}(t)
    :={}&
    -
    \int_{t_0}^t e^{(t-\tau)\Delta}\mathbb{P}\div\left(u_{\delta}^{(1)}(\tau) \otimes u_{\delta}^{(1)}(\tau)\right)d\tau,\\
    u_{\delta}^{(3)}(t)
    :={}&
    -
    \int_{t_0}^t 
    e^{(t-\tau)\Delta}
    \mathbb{P}
    \div
    \left(
    u_{\delta}^{(1)}(\tau) \otimes u_{\delta}^{(2)}(\tau)
    +
    u_{\delta}^{(2)}(\tau) \otimes u_{\delta}^{(1)}(\tau)
    \right.
    \\
    &\quad
    \left.
    {}
    +
    u_{\delta}^{(2)}(\tau) \otimes u_{\delta}^{(2)}(\tau)\right)d\tau.
\end{align}
and consider the following integral equation:
\begin{align}
    \widetilde{u}(t)
    ={}&
    e^{t\Delta}a
    +
    u_{\delta}^{(3)}(t)\\
    &-
    \sum_{m=1}^2
    \int_{t_0}^t 
    e^{(t-\tau)\Delta}
    \mathbb{P}
    \div
    \left(
    u_{\delta}^{(m)}(\tau) \otimes \widetilde{u}(\tau)
    +
    \widetilde{u}(\tau) \otimes u_{\delta}^{(m)}(\tau)
    \right)d\tau\\
    &-
    \int_{t_0}^t 
    e^{(t-\tau)\Delta}
    \mathbb{P}
    \div
    \left(
    \widetilde{u}(\tau) \otimes \widetilde{u}(\tau)
    \right)d\tau. \label{eq:IE_rem}
\end{align}
We note that once we establish a solution $\widetilde{u}_{\delta}[a]$ to \eqref{eq:IE_rem}, we obtain the mild solution to \eqref{eq:NS_I} by
$u_{\delta}[a](t) := u_{\delta}^{(1)}(t) + u_{\delta}^{(2)}(t) + \widetilde{u}_{\delta}[a](t)$.

For the estimates of $u_{\delta}^{(1)}$, 
we decompose it as
\begin{align}
    u_{\delta}^{(1)}(t)
    &=
    \eta
    \delta
    \Delta
    \int_{t_0}^t
    e^{(t-\tau)\Delta}
    g
    d\tau
    +
    \int_{t_0}^t
    e^{(t-\tau)\Delta}
    {h}(\tau)d\tau\\
    &=
    -
    \eta
    \delta
    g 
    +
    \eta
    \delta
    e^{(t-t_0)\Delta}g
    +
    \eta^2\delta 
    \int_{t_0}^t
    e^{(t-\tau)\Delta}
    {h}(\tau)d\tau\\
    &=:
    u_{\delta}^{(1;1)}
    +
    u_{\delta}^{(1;2)}(t)
    +
    u_{\delta}^{(1;3)}(t).
\end{align}
Then, 
it follows from Lemma \ref{lemm:max-reg} {and the Bernstein inequality} that 
\begin{align}
    &
    \begin{aligned}\label{est:v_1_1_XYZ}
    \n{u_{\delta}^{(1;1)}}_{\widetilde{L^{\infty}}(I_{\delta};\dB_{2,1}^{0}(\mathbb{R}^2))}
    =
    \eta\delta
    \n{g}_{\dB_{2,1}^{0}(\mathbb{R}^2)}
    \leq{}
    C
    {M}
    \n{\psi}_{L^2(\mathbb{R}^2)}
    \eta
    \delta
    \leq
    C_1
    M
    \eta 
    \delta,
    \end{aligned}\\
    &
    \begin{aligned}\label{est:v_1_2_XYZ}
    \n{u_{\delta}^{(1;2)}}_{\widetilde{L^{\infty}}(I_{\delta};\dB_{2,1}^{0}(\mathbb{R}^2))\cap \widetilde{L^{\frac{2}{\delta^2}}}(I_{\delta};\dot{H}^{\delta^2}(\mathbb{R}^2))}
    \leq{}
    C
    \eta\delta
    \n{g}_{\dB_{2,1}^0(\mathbb{R}^2)}
    \leq{}
    C_1
    M
    \eta
    \delta,
    \end{aligned}\\
    &
    \begin{aligned}\label{est:v_1_3_X}
    \n{u_{\delta}^{(1;3)}}_{\widetilde{L^{\infty}}(I_{\delta};\dB_{2,1}^{0}(\mathbb{R}^2))}
    \leq{}
    C
    \eta^2\delta
    \n{{h}}_{\widetilde{L^{\infty}}(I_{\delta};{\dB_{p,1}^{\frac{2}{p}-3}}(\mathbb{R}^2))}
    \leq{}
    C_1\eta^2\delta
    \end{aligned}
\end{align}
and
\begin{align}
    &
    \begin{aligned}\label{est:v_1_1_Y}
    \n{u_{\delta}^{(1;1)}}_{\widetilde{L^{\frac{2}{\delta^2}}}(I_{\delta};\dot{H}^{\delta^2}(\mathbb{R}^2))}
    \leq{}&
    C
    \eta\delta
    {(k_{\delta,T}T)^{\frac{\delta^2}{2}}}
    \n{g}_{\dot{H}^{\delta^2}(\mathbb{R}^2)}
    \leq{}
    C_1
    M^2
    \eta 
    \delta,
    \end{aligned}\\
    &
    \begin{aligned}\label{est:v_1_3_Y}
    \n{u_{\delta}^{(1;3)}}_{\widetilde{L^{\frac{2}{\delta^2}}}(I_{\delta};\dot{H}^{\delta^2}(\mathbb{R}^2))}
    \leq{}&
    C
    \eta^2
    \delta
    \n{{h}}_{\widetilde{L^{\frac{2}{\delta^2}}}(I_{\delta};\dot{H}^{-2+\delta^2}(\mathbb{R}^2))}\\
    \leq{}&
    C
    \eta^2
    \delta
    {(k_{\delta,T}T)^{\frac{\delta^2}{2}}}
    \n{{h}}_{\widetilde{L^{\infty}}(I_{\delta};\dB_{p,2}^{\frac{2}{p}-3+\delta^2}(\mathbb{R}^2))}
    \leq{}
    C_1
    \eta^2\delta
    \end{aligned}
\end{align}
for some positive constant $C_1$.
Thus, we have
\begin{align}\label{est:u_1_XY}
    {\n{u_{\delta}^{(1)}}_{\widetilde{L^{\infty}}(I_{\delta};\dB_{2,1}^{0}(\mathbb{R}^2))}
    \leq
    3C_1M^2\eta\delta,} \quad
    \N{u_{\delta}^{(1)}}_{\delta,I_{\delta}}
    \leq 
    {6C_1M^2\eta}.
\end{align}
For the estimates for $u_{\delta}^{(2)}$,
we decompose it as 
\begin{align}
    u_{\delta}^{(2)}(t)
    &=
    \sum_{k,\ell=1}^3
    u_{\delta}^{(2;k,\ell)}(t)\\
    &=
    u_{\delta}^{(2;1,1)}(t)
    +
    \sum_{(k,\ell) \in \{ 1,3 \}^2\setminus \{ (1,1)\}}
    u_{\delta}^{(2;k,\ell)}(t)
    +
    \sum_{(k,\ell) \in \{ 1,2,3 \}^2 \setminus \{ 1,3 \}^2}
    u_{\delta}^{(2;k,\ell)}(t)\\
    &=:
    u_{\delta}^{(2;1)}(t)
    +
    u_{\delta}^{(2;2)}(t)
    +
    u_{\delta}^{(2;3)}(t),
\end{align}
where we have set
\begin{align}
    u_{\delta}^{(2;k,\ell)}(t)
    :=
    -\int_{t_0}^t
    e^{(t-\tau)\Delta}
    \mathbb{P}\div\left(u_{\delta}^{(1;k)}(\tau) \otimes u_{\delta}^{(1;\ell)}(\tau)\right)
    d\tau.
\end{align}
It follows from  Lemma \ref{lemm:XYZ} that
\begin{align}
    &
    \begin{aligned}\label{est:u_2_1_XY}
    \N{u_{\delta}^{(2;1)}}_{\delta,I_{\delta}}
    \leq{}
    C
    \N{u_{\delta}^{(1;1)}}_{\delta,I_{\delta}}^2
    \leq{}
    C_2M^4\eta^2,
    \end{aligned}\\
    &
    \begin{aligned}\label{est:u_2_2_XY}
    \N{u_{\delta}^{(2;2)}}_{\delta,I_{\delta}}
    \leq{}&
    C
    \sum_{ (k,\ell) \in \{ 1,3 \}^2 \setminus \{ (1,1) \}}
    \N{u_{\delta}^{(1;k)}}_{\delta,I_{\delta}}
    \N{u_{\delta}^{(1;\ell)}}_{\delta,I_{\delta}}\\
    \leq{}&
    C_2M^2\eta^3
    \end{aligned}\\
    &
    \begin{aligned}\label{est:u_2_3_XY}
    &
    \n{u_{\delta}^{(2;3)}}_{\widetilde{L^{\infty}}(I_{\delta};\dB_{2,1}^{0}(\mathbb{R}^2))\cap \widetilde{L^{\frac{2}{\delta^2}}}(I_{\delta};\dot{H}^{\delta^2}(\mathbb{R}^2))}\\
    &\quad 
    \leq{}
    C
    \n{u_{\delta}^{(1;2)}}_{\widetilde{L^4}(I_{\delta};\dB_{2,1}^{\frac{1}{2}}(\mathbb{R}^2))}
    \sum_{\ell=1}^3
    \n{u_{\delta}^{(1;\ell)}}_{\widetilde{L^{\infty}}(I_{\delta};\dB_{2,1}^{0}(\mathbb{R}^2))}\\
    &\quad
    \leq{}
    C_2M^2\eta^2\delta^2
    \end{aligned}
\end{align}
for some positive constant $C_2$.
Thus, we have\footnote{{To obtain the estimate \eqref{est:u_2_XY}, we do not need the decomposition and it suffices to apply Lemma \ref{lemm:XYZ} to the definition of $u_\delta^{(2)}$. 
However, constructing the estimate from below in \eqref{est:below}, we use this decompositions $u^{(2,k)}$; therefore we prepare them here.}}
\begin{align}\label{est:u_2_XY}
    \N{u_{\delta}^{(2)}}_{\delta,I_{\delta}}
    \leq 
    3C_2M^4\eta^2.
\end{align}
For the estimates of $u_{\delta}^{(3)}$, 
we rewrite it as
\begin{align}
    u_{\delta}^{(3)}(t)
    ={}
    -
    \sum_{(i,j) \in \{1,2\}^2 \setminus \{ (1,1) \}}
    \int_{t_0}^t 
    e^{(t-\tau)\Delta}
    \mathbb{P}
    \div
    \left(
    u_{\delta}^{(i)}(\tau) \otimes u_{\delta}^{(j)}(\tau)
    \right)d\tau,
\end{align}
by Lemma \ref{lemm:XYZ}, there holds
\begin{align}\label{est:u_3_XY}
    \N{u_{\delta}^{(3)}}_{\delta,I_{\delta}}
    \leq{}&
    C
    \sum_{(i,j) \in \{1,2\}^2 \setminus \{ (1,1) \}}
    \N{u_{\delta}^{(i)}}_{\delta,I_{\delta}}
    \N{u_{\delta}^{(j)}}_{\delta,I_{\delta}}
    \leq{}
    C_3M^8\eta^3.
\end{align}
for some positive constant $C_3$.
Now, we construct a solution $\widetilde{u}_{\delta}$ to the equation \eqref{eq:IE_rem}.
To this end, we define a complete metric space $(S_{\delta},d_{S_{\delta}})$ by
\begin{align}
    &
    S_{\delta}
    :={}
    \left\{
    u=u'+u'' \in \widetilde{C}(I_{\delta};\dB_{2,1}^0(\mathbb{R}^2))\ ;\ 
    u' \in S_{\delta}',
    u'' \in S_{\delta}''
    \right\},\\
    &
    \begin{aligned}
    &
    d_{S_{\delta}}(u,v)
    :={}
    \inf_{\substack{u=u'+u''\\ v= v'+v'' \\ u',v' \in S_{\delta}'\\ u'',v'' \in S_{\delta}''}}
    \left(
    \n{u'-v'}_{\widetilde{L^{\infty}}(I_{\delta};\dB_{2,1}^{0}(\mathbb{R}^2)) \cap \widetilde{L^4}(I_{\delta};\dB_{2,1}^{\frac{1}{2}}(\mathbb{R}^2))}
    +
    \N{u''-v''}_{\delta,I_{\delta}}
    \right),
    \end{aligned}
\end{align}
where 
\begin{align}
    S_{\delta}'
    :={}&
    \left\{
    \begin{aligned}
    u' 
    \in{}& 
    \widetilde{L^{\infty}}(I_{\delta};\dB_{2,1}^{0}(\mathbb{R}^2)) \\
    &
    \cap 
    \widetilde{L^4}(I_{\delta};\dB_{2,1}^{\frac{1}{2}}(\mathbb{R}^2))
    \end{aligned}
    \ ;\ 
    \n{u'}_{\widetilde{L^{\infty}}(I_{\delta};\dB_{2,1}^{0}(\mathbb{R}^2))\cap \widetilde{L^4}(I_{\delta};\dB_{2,1}^{\frac{1}{2}}(\mathbb{R}^2))}
    \leq 
    4C_0 \varepsilon
    \right\},\\
    S_{\delta}''
    :={}&
    \left\{
    u'' \in \widetilde{L^{\infty}}(I_{\delta};\dB_{2,1}^{0}(\mathbb{R}^2)) \cap \widetilde{L^{\frac{2}{\delta^2}}}(I_{\delta};\dot{H}^{\delta^2}(\mathbb{R}^2))
    \ ;\ 
    \N{u''}_{\delta,I_{\delta}}
    \leq 
    4C_3 M^8\eta^3
    \right\},
\end{align}
and let us consider a map
\begin{align}
    \Phi_{\delta}[u](t)
    ={}&
    e^{t\Delta}a
    +
    u_{\delta}^{(3)}(t)\\
    &-
    \sum_{m=1}^2
    \int_{t_0}^t 
    e^{(t-\tau)\Delta}
    \mathbb{P}
    \div
    \left(
    u_{\delta}^{(m)}(\tau) \otimes u(\tau)
    +
    u(\tau) \otimes u_{\delta}^{(m)}(\tau)
    \right)d\tau\\
    &-
    \int_{t_0}^t 
    e^{(t-\tau)\Delta}
    \mathbb{P}
    \div
    \left(
    u(\tau) \otimes u(\tau)
    \right)d\tau,\qquad u \in S_{\delta}.
\end{align}
For any $u=u'+u'' \in S_{\delta}$ with $u' \in S_{\delta}'$ and $u'' \in S_{\delta}''$, 
we decompose $\Phi_{\delta}[u]$ as 
\begin{align}
    \Phi_{\delta}[u](t)
    ={}&
    \Phi_{\delta}'[u',u''](t)
    +
    \Phi_{\delta}''[u''](t),
\end{align}
where we have set
\begin{align}
    &
    \begin{aligned}
    \Phi_{\delta}'[u',u''](t)
    :={}
    e^{t\Delta}a
    &
    -
    \sum_{m=1}^2
    \int_{t_0}^t 
    e^{(t-\tau)\Delta}
    \mathbb{P}
    \div
    \left(
    u_{\delta}^{(m)}(\tau) \otimes u'(\tau)
    +
    u'(\tau) \otimes u_{\delta}^{(m)}(\tau)
    \right)d\tau
    \\
    &-
    \int_{t_0}^t 
    e^{(t-\tau)\Delta}
    \mathbb{P}
    \div
    \left(
    u'(\tau) \otimes u''(\tau)
    +
    u''(\tau) \otimes u'(\tau)
    \right)d\tau\\
    &-
    \int_{t_0}^t 
    e^{(t-\tau)\Delta}
    \mathbb{P}
    \div
    \left(
    u'(\tau) \otimes u'(\tau)
    \right)d\tau,
    \end{aligned}\\
    &
    \begin{aligned}
    \Phi_{\delta}''[u''](t)
    :={}
    u_{\delta}^{(3)}(t)
    &
    -
    \sum_{m=1}^2
    \int_{t_0}^t 
    e^{(t-\tau)\Delta}
    \mathbb{P}
    \div
    \left(
    u_{\delta}^{(m)}(\tau) \otimes u''(\tau)
    +
    u''(\tau) \otimes u_{\delta}^{(m)}(\tau)
    \right)d\tau\\
    &-
    \int_{t_0}^t 
    e^{(t-\tau)\Delta}
    \mathbb{P}
    \div
    \left(
    u''(\tau) \otimes u''(\tau)
    \right)d\tau.
    \end{aligned}
\end{align}
It follows from Lemma \ref{lemm:XYZ}, \eqref{est:a}, \eqref{est:u_1_XY}, \eqref{est:u_2_XY}, and \eqref{est:u_3_XY} that 
\begin{align}
    &
    \n{\Phi_{\delta}'[u',u'']}_{\widetilde{L^{\infty}}(I_{\delta};\dB_{2,1}^{0}(\mathbb{R}^2)) \cap \widetilde{L^4}(I_{\delta};\dB_{2,1}^{\frac{1}{2}}(\mathbb{R}^2))}\\
    &\quad 
    \leq{}
    C_0
    \n{a}_{\dB_{2,1}^0(\mathbb{R}^2)}
    +
    C
    \sum_{m=1}^2
    \n{u_{\delta}^{(m)}}_{\widetilde{L^{\infty}}(I_{\delta};\dB_{2,1}^{0}(\mathbb{R}^2))}
    \n{u'}_{\widetilde{L^4}(I_{\delta};\dB_{2,1}^{\frac{1}{2}}(\mathbb{R}^2))}\\
    &\qquad
    +
    C
    \n{u''}_{\widetilde{L^{\infty}}(I_{\delta};\dB_{2,1}^{0}(\mathbb{R}^2))}
    \n{u'}_{\widetilde{L^4}(I_{\delta};\dB_{2,1}^{\frac{1}{2}}(\mathbb{R}^2))}
    +
    C
    \n{u'}_{\widetilde{L^4}(I_{\delta};\dB_{2,1}^{\frac{1}{2}}(\mathbb{R}^2))}^2\\
    &\quad
    \leq{}
    C_0
    \varepsilon
    +
    C_4
    M^4
    \eta^2
    \varepsilon
    +
    C_4
    M^8
    \eta^3
    \varepsilon
    +
    C_4
    \varepsilon^2
\end{align}
and
\begin{align}
    \N{\Phi_{\delta}''[u'']}_{\delta,I_{\delta}}
    \leq{}&
    \N{v_{\delta}^{(3)}}_{\delta,I_{\delta}}
    +
    C
    \sum_{m=1}^2
    \N{u_{\delta}^{(m)}}_{\delta,I_{\delta}}
    \N{u''}_{\delta,I_{\delta}}
    +
    C
    \N{u''}_{\delta,I_{\delta}}^2\\
    \leq{}&
    C_3
    M^8\eta^3
    +
    C_4
    (M^4\eta^2)
    M^8\eta^3
    +
    C_4
    (M^8\eta^3)^2
\end{align}
for some positive constant $C_4$.
Here, we have used 
\begin{align}
    \sum_{m=1}^2
    \N{u_{\delta}^{(m)}}_{\delta,I_{\delta}}
    \leq
    CM^2\eta\delta + CM^4\eta^2
    \leq
    CM^4\eta^2,
\end{align}
which is implied by \eqref{est:u_1_XY}, \eqref{est:u_2_XY}, and $\delta \leq \eta$.
Let $0<\eta_1,\varepsilon_1 \leq 1/2$ satisfy
\begin{align}
    \max\mp{
    M^4
    \eta_1^2,
    \varepsilon_1,
    M^8
    \eta_1^3}
    \leq 
    \min\mp{\frac{C_0}{3C_4}, \frac{C_3}{3C_4}}
\end{align}
and assume $0 < \eta \leq \eta_1$ and $0<\varepsilon \leq \varepsilon_1$ in the following of this proof.
Then, we see that for every $u\in S_{\delta}$, 
\begin{align}\label{Phi'-Phi''}
    \begin{split}
    &\n{\Phi_{\delta}'[u',u'']}_{\widetilde{L^{\infty}}(I_{\delta};\dB_{2,1}^{0}(\mathbb{R}^2)) \cap \widetilde{L^4}(I_{\delta};\dB_{2,1}^{\frac{1}{2}}(\mathbb{R}^2))}
    \leq{}
    2C_0\varepsilon,\\
    &
    \N{\Phi_{\delta}''[u'']}_{\delta,I_{\delta}}
    \leq{}
    2C_3
    M^8\eta^3,
    \end{split}
\end{align}
which implies $\Phi_{\delta}'[u',u''] \in S_{\delta}'$ and $\Phi_{\delta}''[u''] \in S_{\delta}''$.
Thus, we have $\Phi_{\delta}[u] \in S_{\delta}$ for all $u \in S_{\delta}$.
For $u=u'+u'',v=v'+v'' \in S_{\delta}$ with $u',v' \in S_{\delta}'$ and $u'',v'' \in S_{\delta}''$, 
since it holds
\begin{align}
    &\Phi_{\delta}'[u',u''](t)
    -
    \Phi_{\delta}'[v',v''](t)\\
    &\quad 
    ={}
    -
    \sum_{m=1}^2
    \int_{t_0}^t 
    e^{(t-\tau)\Delta}
    \mathbb{P}
    \div
    \left(
    u_{\delta}^{(m)}(\tau) \otimes (u'(\tau)-v'(\tau))
    \right)d\tau
    \\
    &\qquad
    -
    \sum_{m=1}^2
    \int_{t_0}^t 
    e^{(t-\tau)\Delta}
    \mathbb{P}
    \div
    \left(
    (u'(\tau)-v'(\tau)) \otimes u_{\delta}^{(m)}(\tau)
    \right)d\tau
    \\
    &\qquad
    -
    \int_{t_0}^t 
    e^{(t-\tau)\Delta}
    \mathbb{P}
    \div
    \left(
    (u'(\tau)-v'(\tau)) \otimes u''(\tau)
    +
    v'(\tau) \otimes (u''(\tau)-v''(\tau))
    \right)d\tau\\
    &\qquad
    -
    \int_{t_0}^t 
    e^{(t-\tau)\Delta}
    \mathbb{P}
    \div
    \left(
    (u''(\tau)-v''(\tau)) \otimes u'(\tau)
    +
    v''(\tau) \otimes (u'(\tau)-v'(\tau))
    \right)d\tau\\
    &\qquad
    -
    \int_{t_0}^t 
    e^{(t-\tau)\Delta}
    \mathbb{P}
    \div
    \left(
    (u'(\tau)-v'(\tau)) \otimes u'(\tau)
    +
    v'(\tau) \otimes (u'(\tau)-v'(\tau))
    \right)d\tau
\end{align}
and
\begin{align}
    &\Phi_{\delta}''[u''](t)
    -
    \Phi_{\delta}''[v''](t)\\
    &\quad 
    ={}
    -
    \sum_{m=1}^2
    \int_{t_0}^t 
    e^{(t-\tau)\Delta}
    \mathbb{P}
    \div
    \left(
    u_{\delta}^{(m)}(\tau) \otimes (u''(\tau)-v''(\tau))
    \right)d\tau
    \\
    &{}\qquad
    -
    \sum_{m=1}^2
    \int_{t_0}^t 
    e^{(t-\tau)\Delta}
    \mathbb{P}
    \div
    \left(
    (u''(\tau)-v''(\tau)) \otimes u_{\delta}^{(m)}(\tau)
    \right)d\tau\\
    &{}\qquad
    -
    \int_{t_0}^t 
    e^{(t-\tau)\Delta}
    \mathbb{P}
    \div
    \left(
    (u''(\tau)-v''(\tau)) \otimes u''(\tau)
    +
    v''(\tau) \otimes (u''(\tau)-v''(\tau))
    \right)d\tau,
\end{align}
we have 
\begin{align}
    &
    \n{\Phi_{\delta}'[u',u'']
    -
    \Phi_{\delta}'[v',v'']}_{\widetilde{L^{\infty}}(I_{\delta};\dB_{2,1}^{0}(\mathbb{R}^2))\cap \widetilde{L^4}(I_{\delta};\dB_{2,1}^{\frac{1}{2}}(\mathbb{R}^2))}\\
    &\quad 
    \leq{}
    C
    \sum_{m=1}^2
    \n{u_{\delta}^{(m)}}_{\widetilde{L^{\infty}}(I_{\delta};\dB_{2,1}^{0}(\mathbb{R}^2))}
    \n{u'-v'}_{\widetilde{L^4}(I_{\delta};\dB_{2,1}^{\frac{1}{2}}(\mathbb{R}^2))}\\
    &\qquad
    +
    C
    \sp{
    \n{u'}_{\widetilde{L^4}(I_{\delta};\dB_{2,1}^{\frac{1}{2}}(\mathbb{R}^2))}
    +
    \n{v'}_{\widetilde{L^4}(I_{\delta};\dB_{2,1}^{\frac{1}{2}}(\mathbb{R}^2))}
    }
    \n{u''-v''}_{\widetilde{L^{\infty}}(I_{\delta};\dB_{2,1}^{0}(\mathbb{R}^2))}\\
    &\qquad
    +
    C
    \sp{
    \n{u''}_{\widetilde{L^{\infty}}(I_{\delta};\dB_{2,1}^{0}(\mathbb{R}^2))}
    +
    \n{v''}_{\widetilde{L^{\infty}}(I_{\delta};\dB_{2,1}^{0}(\mathbb{R}^2))}
    }
    \n{u'-v'}_{\widetilde{L^4}(I_{\delta};\dB_{2,1}^{\frac{1}{2}}(\mathbb{R}^2))}\\
    &\qquad
    +
    C
    \sp{
    \n{u'}_{\widetilde{L^{\infty}}(I_{\delta};\dB_{2,1}^{0}(\mathbb{R}^2))}
    +
    \n{v'}_{\widetilde{L^{\infty}}(I_{\delta};\dB_{2,1}^{0}(\mathbb{R}^2))}
    }
    \n{u'-v'}_{\widetilde{L^4}(I_{\delta};\dB_{2,1}^{\frac{1}{2}}(\mathbb{R}^2))}\\
    &\quad
    \leq 
    C_5
    \sp{\varepsilon+M^8\eta}
    \sp{
    \n{u'-v'}_{\widetilde{L^{\infty}}(I_{\delta};\dB_{2,1}^{0}(\mathbb{R}^2)) \cap \widetilde{L^4}(I_{\delta};\dB_{2,1}^{\frac{1}{2}}(\mathbb{R}^2))}
    +
    \N{u''-v''}_{\delta,I_{\delta}}
    }
\end{align}
and
\begin{align}
    &
    \N{
    {\Phi_{\delta}''[u'']}
    -
    {\Phi_{\delta}''[v'']}
    }_{\delta,I_{\delta}}\\
    &\quad 
    \leq{}
    C
    \sum_{m=1}^2
    {
    \N{u_{\delta}^{(m)}}_{\delta,I_{\delta}}
    \N{u''-v''}_{\delta,I_{\delta}}
    }\\
    &\qquad
    +
    C
    \sp{
    \N{u''}_{\delta,I_{\delta}}
    +
    \N{v''}_{\delta,I_{\delta}}
    }
    \N{u''-v''}_{\delta,I_{\delta}}\\
    &\quad
    \leq 
    C_5M^8\eta
    \sp{
    \n{u'-v'}_{\widetilde{L^{\infty}}(I_{\delta};\dB_{2,1}^{0}(\mathbb{R}^2)) \cap \widetilde{L^4}(I_{\delta};\dB_{2,1}^{\frac{1}{2}}(\mathbb{R}^2))}
    +
    \N{u''-v''}_{\delta,I_{\delta}}
    }
\end{align}
for some positive constant $C_5$.
Thus, it holds
\begin{align}
    &
    d_{S_{\delta}}(\Phi_{\delta}[u],\Phi_{\delta}[v])\\
    &\quad 
    \leq{}
    \inf_{\substack{u=u'+u'',\ v= v'+v'' \\ u',v' \in S_{\delta}',\ u'',v'' \in S_{\delta}''}}
    \left(
    \n{
    \Phi_{\delta}'[u',u'']
    -
    \Phi_{\delta}'[v',v'']}_{\widetilde{L^{\infty}}(I_{\delta};\dB_{2,1}^{0}(\mathbb{R}^2))\cap \widetilde{L^4}(I_{\delta};\dB_{2,1}^{\frac{1}{2}}(\mathbb{R}^2))}\right.\\
    &\left.\qquad \qquad \qquad \qquad \qquad \qquad{}
    +
    \N{
    {\Phi_{\delta}''[u'']}
    -
    {\Phi_{\delta}''[v'']}}_{\delta,I_{\delta}
    }
    \right)\\
    &\quad 
    \leq{}
    C_5
    \sp{\varepsilon+2M^8\eta}
    d_{S_{\delta}}(u,v),
\end{align}
where the first estimate above is ensured by the fact   
$
\Phi_{\delta}'[u',u'']
-
\Phi_{\delta}'[v',v'']
\in 
S_{\delta}'
$
and 
$
{\Phi_{\delta}''[u'']}
-
{\Phi_{\delta}''[v'']}
\in
S_{\delta}''
$,
which are implied by \eqref{Phi'-Phi''}.
Let $0< \varepsilon_2, \eta_2 \leq 1/2$ satisfy 
\begin{align}
    C_5
    \sp{\varepsilon_2+2M^8\eta_2}
    \leq 
    \frac{1}{2}.
\end{align}
In the following of this proof, we assume $0 < \varepsilon \leq \min\{\varepsilon_1,\varepsilon_2\}$ and $0 < \eta \leq \min \{ \eta_1, \eta_2 \}$.
Then, we see that 
\begin{align}
    d_{S_{\delta}}(\Phi_{\delta}[u],\Phi_{\delta}[v])
    \leq{}&
    \frac{1}{2}
    d_{S_{\delta}}(u,v)
\end{align}
for all $u,v \in S_{\delta}$.
Hence, the Banach fixed point theorem yields the unique existence of $\widetilde{u}_{\delta}[a] \in S_{\delta}$ satisfying $\widetilde{u}_{\delta}[a] = \Phi_{\delta}[\widetilde{u}_{\delta}[a]]$, which implies $\widetilde{u}_{\delta}[a]$ is a solution to \eqref{eq:IE_rem}.
We note that $\widetilde{u}_{\delta}[a] \in S_{\delta}$ implies 
\begin{align}\label{est:rem}
    \n{\widetilde{u}_{\delta}[a]}_{\widetilde{L^{\infty}}(I_{\delta};\dB_{2,1}^{0}(\mathbb{R}^2))}
    \leq
    4C_0\varepsilon + 4C_3M^8\eta^3.
\end{align}
Moreover, 
$u_{\delta}[a]:=u_{\delta}^{(1)}+u_{\delta}^{(2)}+\widetilde{u}_{\delta}[a]$ is a solution to \eqref{eq:IE_u}.

Finally, we establish the second estimate in \eqref{est:u}.
To this end, we focus on $u_{\delta}^{(2;1)}$. 
As $g=g(x)$ is time-independent, there holds
\begin{align}
    u_{\delta}^{(2;1)}(t_0+k_{\delta,T}T)
    &=
    -
    \eta^2\delta^2
    \int_{t_0}^{t_0+k_{\delta,T}T}
    e^{(t_0+k_{\delta,T}T-\tau)\Delta}
    \mathbb{P}
    \div 
    (g \otimes g)d\tau\\
    &=
    \eta^2\delta^2
    \sp{
    1-e^{k_{\delta,T}T\Delta}
    }
    (-\Delta)^{-1}
    \mathbb{P}
    \div
    \sp{ g \otimes g}.
\end{align}
Then, we see that
\begin{align}
    &
    \n{u_{\delta}^{(2;1)}(t_0+k_{\delta,T}T)}_{\dB_{2,1}^0(\mathbb{R}^2)}\\
    &\quad
    \geq
    \eta^2\delta^2
    \sum_{-\frac{1}{2\delta^2} \leq j \leq {-2}}
    \n{\Delta_j\sp{
    1-e^{k_{\delta,T}T\Delta}
    }
    (-\Delta)^{-1}
    \mathbb{P}
    \div
    \sp{ g \otimes g}}_{L^2(\mathbb{R}^2)}\\
    &\quad
    \geq
    c\eta^2\delta^2
    \sum_{-\frac{1}{2\delta^2} \leq j \leq {-2}}
    \sp{
    1-e^{-\frac{1}{4}2^{2j}k_{\delta,T}T}
    }
    \n{
    \Delta_j
    (-\Delta)^{-1}
    \mathbb{P}
    \div
    \sp{ g \otimes g}}_{L^2(\mathbb{R}^2)}\\
    &\quad
    \geq
    c\eta^2\delta^2
    \sum_{-\frac{1}{2\delta^2} \leq j \leq {-2}}
    \n{
    \Delta_j
    (-\Delta)^{-1}
    \mathbb{P}
    \div
    \sp{ g \otimes g}}_{L^2(\mathbb{R}^2)},
\end{align}
where we have used the estimate
\begin{align}
    1 - e^{-\frac{1}{4}2^{2j}k_{\delta,T}T}
    \geq
    1 - e^{-\frac{1}{4}2^{-\frac{1}{\delta^2}}k_{\delta,T}T}
    \geq
    1 - e^{-\frac{1}{4}2^{-\frac{1}{\delta^2}}2^{\frac{1}{\delta^2}}}
    =
    1 - e^{-\frac{1}{4}}
\end{align}
which is implied by $-1/(2\delta^2) \leq j \leq {-2}$ and \eqref{k_delta_T}.
We consider the estimate for $\Delta_j\sp{ g \otimes g}$.
It follows from \cite{Fujii-pre-1}*{Lemmas 2.1 and 2.4} that for $-1/(2\delta^2) \leq j \leq {-2}$,
\begin{align}\label{lower-est-1}
    \Delta_j{\div}\sp{ g \otimes g}
    =
    \frac{M^2}{2}
    \Delta_j  
    \begin{pmatrix}
        0 \\ \partial_{x_2}(\psi^2)
    \end{pmatrix}
    +
    \frac{1}{2}
    \Delta_j 
    \operatorname{div} ( \nabla^{\perp} \psi \otimes \nabla^{\perp} \psi )
\end{align}
and
\begin{align}\label{lower-est-2}
    \left\| 
    \Delta_j(-\Delta)^{-1}\mathbb{P}
    \begin{pmatrix}
        0 \\ \partial_{x_2}(\psi^2)
    \end{pmatrix}
    \right\|_{L^2(\mathbb{R}^2)}
    \geqslant c
\end{align}
for some positive constant $c$ independent of $j$.
Thus, by \eqref{lower-est-1} and \eqref{lower-est-2}, we have
\begin{align}
    &
    \eta^2\delta^2
    \sum_{-\frac{1}{2\delta^2} \leq j \leq {-2}}
    \n{
    \Delta_j
    (-\Delta)^{-1}
    \mathbb{P}
    \div
    \sp{ g \otimes g}}_{L^2(\mathbb{R}^2)}\\
    &\quad
    \geq{}
    c
    M^2\eta^2\delta^2
    \sum_{-\frac{1}{2\delta^2} \leq j \leq {-2}}
    \left\| 
    \Delta_j(-\Delta)^{-1}\mathbb{P}
    \begin{pmatrix}
        0 \\ \partial_{x_2}(\psi^2)
    \end{pmatrix}
    \right\|_{L^2(\mathbb{R}^2)}\\
    &\qquad
    -
    C
    \eta^2\delta^2
    \sum_{-\frac{1}{2\delta^2} \leq j \leq {-2}}
    \n{\Delta_j 
    (-\Delta)^{-1}
    \mathbb{P}
    \operatorname{div} ( \nabla^{\perp} \psi \otimes \nabla^{\perp} \psi )}_{L^2(\mathbb{R}^2)}\\
    &\quad 
    \geq 
    cM^2\eta^2
    -
    C\eta^2
    \n{\nabla^{\perp} \psi \otimes \nabla^{\perp} \psi}_{\dB_{2,\infty}^{-1}(\mathbb{R}^2)}\\
    &\quad 
    \geq 
    cM^2\eta^2
    -
    C\eta^2
    \n{\nabla^{\perp} \psi}_{L^2(\mathbb{R}^2)}^2,
\end{align}
which implies
\begin{align}
    \n{u_{\delta}^{(2;1)}(t_0+k_{\delta,T}T)}_{\dB_{2,1}^0(\mathbb{R}^2)}
    \geq 
    c_1M^2\eta^2
    -
    C_6\eta^2
\end{align}
for some positive constant $c_1$ and $C_6$.
Here, we have used the estimate 
\begin{align}
    \n{\nabla^{\perp} \psi \otimes \nabla^{\perp} \psi}_{\dB_{2,\infty}^{-1}(\mathbb{R}^2)} \leq C\n{\nabla^{\perp} \psi \otimes \nabla^{\perp} \psi}_{L^1(\mathbb{R}^2)} \leq C\n{\nabla^{\perp} \psi}_{L^2(\mathbb{R}^2)}^2.
\end{align}
Using \eqref{est:u_1_XY}, \eqref{est:u_2_2_XY}, \eqref{est:u_2_3_XY}, \eqref{est:u_3_XY}, and \eqref{est:rem}, 
we obtain
\begin{align}
    &\n{u_{\delta}[a](t_0+k_{\delta,T}T)}_{\dB_{2,1}^0(\mathbb{R}^2)}\\
    &\quad \geq{}
    \n{u_{\delta}^{(2;1)}(t_0+k_{\delta,T}T)}_{\dB_{2,1}^0(\mathbb{R}^2)}\\
    &\qquad-
    \n{u_{\delta}^{(1)}}_{\widetilde{L^{\infty}}(I_{\delta};\dB_{2,1}^{0}(\mathbb{R}^2))}
    -
    \sum_{k=2}^3
    \n{u_{\delta}^{(2;k)}}_{\widetilde{L^{\infty}}(I_{\delta};\dB_{2,1}^{0}(\mathbb{R}^2))}
    -
    \n{\widetilde{u}_{\delta}[a]}_{\widetilde{L^{\infty}}(I_{\delta};\dB_{2,1}^{0}(\mathbb{R}^2))}\\
    &\quad\geq{}
    c_1M^2\eta^2
    -C_6\eta^2\\
    &\qquad
    -3C_1M\eta\delta
    -C_2M^2\eta^3
    -C_2M^2\eta^2\delta^2
    -4C_0\varepsilon
    -4C_3M^8\eta^3\label{est:below}\\
    &\quad\geq{}
    c_1M^2\eta^2
    -C_6\eta^2\\
    &\qquad
    -3C_1M\eta^2
    -C_2M^2\eta^3
    -C_2M^2\eta^4
    -4C_0\varepsilon
    -4C_3M^8\eta^3\\
    &\quad=
    \sp{
    c_1M^2
    -C_6
    -3C_1M
    }
    \eta^2\\
    &\qquad
    -
    \sp{
    C_2M^2\eta
    +C_2M^2\eta^2
    +4C_3M^8\eta
    }\eta^2
    -
    4C_0\varepsilon.
\end{align}
We now choose 
$M = M_0 \geq 10$ 
and 
$0< \eta=\eta_0 \leq \min\{\eta_1,\eta_2\}$, 
so that 
\begin{gather}
    c_1M_0^2
    -C_6
    -3C_1M_0 \geq 3,\\
    C_2M_0^{2}\eta_0
    +C_2M_0^{2}\eta_0^2
    +4C_3M_0^{8}\eta_0
    \leq1,\qquad
    C_0M_0\eta_0 \leq 1,
\end{gather}
and let
\begin{align}
    \varepsilon_0
    :=
    \min
    \left\{
    \varepsilon_1,\varepsilon_2,\eta_0, \frac{\eta_0^2}{4C_0}, \frac{\eta_0^2}{2}
    \right\}.
\end{align}
Then, for any $0 < \delta \leq \varepsilon_0$, there holds
\begin{gather}
    \n{a}_{\dB_{2,1}^0(\mathbb{R}^2)}
    \leq 
    \varepsilon_0,\qquad
    \n{f_{\delta}}_{\widetilde{L^{\infty}}(\mathbb{R};\dB_{p,1}^{\frac{2}{p}-3}(\mathbb{R}^2))}
    \leq
    \delta,\\
    \n{u_{\delta}[a](t_0+k_{\delta,T}T)}_{\dB_{2,1}^0(\mathbb{R}^2)}
    \geq 2\varepsilon_0,
\end{gather}
and we complete the proof.
\end{proof}
\subsection{Unconditional uniqueness}
To complete the proof of Theorem \ref{thm:2}, we need the following unconditional uniqueness for the initial value problem \eqref{eq:NS_I}.
\begin{prop}\label{prop:unique}
    Let $I=[t_0,T_0) \subset \mathbb{R}$ and $a \in \dB_{2,1}^{0}(\mathbb{R}^2)$. 
    If two vector fields ${u_1,u_2} \in C(I;\dB_{2,1}^{0}(\mathbb{R}^2))$ are mild solutions to \eqref{eq:NS_I} with ${u_1}(t_0)={u_2}(t_0)=a$ and a same external force, then it holds ${u_1=u_2}$.
\end{prop}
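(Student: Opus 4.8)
The plan is to prove uniqueness by a continuation argument that reduces the statement to local-in-time uniqueness, for which the decisive ingredient is an a priori smoothing property of merely continuous critical solutions.

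First I would set $E:=\mp{t\in I : u_1\equiv u_2 \text{ on } [t_0,t]}$. Since $u_1(t_0)=u_2(t_0)=a$ we have $t_0\in E$, and $E$ is closed in $I$ by the continuity of $u_1,u_2$ into $\dB_{2,1}^0(\mathbb{R}^2)$. It therefore suffices to show that $E$ is open, i.e. that whenever $u_1(s)=u_2(s)=:b$ for some $s\in I$, the two solutions agree on a short interval $[s,s+\tau)$. Writing $w:=u_1-u_2$ and using $u_1\otimes u_1-u_2\otimes u_2=u_1\otimes w+w\otimes u_2$, the difference solves the forcing-free integral equation
\[
w(t)=-\int_s^t e^{(t-\tau')\Delta}\mathbb{P}\div\bigl(u_1(\tau')\otimes w(\tau')+w(\tau')\otimes u_2(\tau')\bigr)\,d\tau',\qquad w(s)=0.
\]

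Second, I would establish the a priori regularity $u_1,u_2\in\widetilde{L^4}(s,s+\tau;\dB_{2,1}^{\frac12}(\mathbb{R}^2))$ for small $\tau$. This is the heart of the matter and the main obstacle: the continuous critical regularity $C(I;\dB_{2,1}^0)$ does not by itself supply the extra half derivative encoded in the Kato-type space $\widetilde{L^4}(\dB_{2,1}^{\frac12})$, and the product of two $\dB_{2,1}^0$ functions is exactly borderline (the high--high remainder $R(u,v)$ just fails to land in $\dB_{1,1}^{0+}$), so no soft bootstrap from $\dB_{2,1}^0$ alone is available. The enabling fact is that on a compact subinterval $\bar J$ the image $\mp{u_i(t):t\in\bar J}$ is compact in $\dB_{2,1}^0$, so the high-frequency tails are uniformly small, $\n{(\mathrm{Id}-\dot S_N)u_i}_{\widetilde{L^\infty}(\bar J;\dB_{2,1}^0)}\to0$ as $N\to\infty$, and in particular $\n{u_i}_{\widetilde{L^\infty}(\bar J;\dB_{2,1}^0)}<\infty$. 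I would then feed the integral equation on $[s,s+\tau)$ into Lemma \ref{lemm:max-reg} and the Bony decomposition underlying Lemma \ref{lemm:XYZ}(2), splitting $u_i=\dot S_N u_i+(\mathrm{Id}-\dot S_N)u_i$: the low modes are controlled by Bernstein's inequality with a gain $2^{N/2}\tau^{1/4}$ that is small for fixed $N$ and small $\tau$; the low--high interactions carry the small factor $\n{(\mathrm{Id}-\dot S_N)u_i}_{\widetilde{L^\infty}(\dB_{2,1}^0)}$; and the dangerous high--high interaction is bounded by $C\,\n{(\mathrm{Id}-\dot S_N)u_i}_{\widetilde{L^\infty}(\dB_{2,1}^0)}\,\n{u_i}_{\widetilde{L^4}(\dB_{2,1}^{\frac12})}$, i.e. by a small multiple of the very norm being estimated. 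Choosing $N$ large and then $\tau$ small, this self-improving inequality, together with a continuity-in-$\tau$ argument applied to the finite truncated quantities $\n{\dot S_M u_i}_{\widetilde{L^4}(s,s+\tau;\dB_{2,1}^{\frac12})}$ and passage to the limit $M\to\infty$, should yield the claimed smoothing with a bound that tends to $0$ as $\tau\downarrow0$.

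Finally, granting the smoothing, the conclusion is routine. On $J=[s,s+\tau)$ I would apply Lemma \ref{lemm:XYZ}(2) to each term of the difference equation, placing the coefficients $u_1,u_2$ in $\widetilde{L^4}(\dB_{2,1}^{\frac12})$ and $w$ in $\widetilde{L^\infty}(\dB_{2,1}^0)$, to obtain
\[
\n{w}_{\widetilde{L^\infty}(J;\dB_{2,1}^0)\cap\widetilde{L^4}(J;\dB_{2,1}^{\frac12})}
\leq C\bigl(\n{u_1}_{\widetilde{L^4}(J;\dB_{2,1}^{\frac12})}+\n{u_2}_{\widetilde{L^4}(J;\dB_{2,1}^{\frac12})}\bigr)\n{w}_{\widetilde{L^\infty}(J;\dB_{2,1}^0)}.
\]
Since the $\widetilde{L^4}(\dB_{2,1}^{\frac12})$ norms are small on a sufficiently short $J$ (absolute continuity of the integral, using $\ell^1$-summability and dominated convergence), the prefactor is $\leq\frac12$, forcing $w\equiv0$ on $J$; hence $E$ is open, $E=I$, and $u_1=u_2$. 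I expect the a priori smoothing of Step 2 — in particular the treatment of the borderline high--high interaction and the breaking of the apparent circularity in estimating $\n{u_i}_{\widetilde{L^4}(\dB_{2,1}^{\frac12})}$ in terms of itself — to be the genuinely delicate part, whereas the continuation and the final contraction are standard once it is in place.
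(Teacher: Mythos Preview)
Your Step~2 contains two genuine gaps. First, the claim that compactness of the image in $\dB_{2,1}^0$ yields $\n{(\mathrm{Id}-\dot S_N)u_i}_{\widetilde{L^\infty}(\bar J;\dB_{2,1}^0)}\to 0$ is false: compactness only gives $\sup_t\sum_{|j|>N}\n{\Delta_j u_i(t)}_{L^2}\to 0$, not the stronger $\sum_{|j|>N}\sup_t\n{\Delta_j u_i(t)}_{L^2}\to 0$. One can build $u\in C([0,1];\dB_{2,1}^0)$ with $\n{u}_{\widetilde{L^\infty}}=\infty$ by letting $u$ visit, on disjoint time intervals accumulating at $0$, frequency-$2^j$ bumps of $\dB_{2,1}^0$-norm $1/\log j$. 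So the whole bootstrap, which is run in Chemin--Lerner norms, has no a priori finite starting point. Second, the integral equation for each $u_i$ contains the force Duhamel term $\int_s^t e^{(t-\tau)\Delta}\mathbb{P}f\,d\tau$; the proposition puts no hypothesis on $f$ beyond the existence of $C(I;\dB_{2,1}^0)$ solutions, so there is no reason this term lies in $\widetilde{L^4}(\dB_{2,1}^{1/2})$, and without that your self-improving inequality for $\n{u_i}_{\widetilde{L^4}(\dB_{2,1}^{1/2})}$ carries an uncontrolled contribution.

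The paper avoids both obstacles by a different, Furioli--Lemari\'e--Terraneo style mechanism. It estimates the difference $w$ in the \emph{weaker} norm $\sup_t\n{w(t)}_{\dB_{2,\infty}^0}$ (ordinary $L^\infty_t$, not Chemin--Lerner) and splits each coefficient as $u_m=e^{(t-t_0)\Delta}a+v_m$. For the linear piece it uses a Kato-weighted bilinear estimate (Lemma~\ref{lemm:5}) with prefactor $\sup_t(t-t_0)^{1/4}\n{e^{(t-t_0)\Delta}a}_{\dB_{4,1}^0}$, which vanishes as $T\downarrow t_0$ by density. For the remainder $v_m$---which absorbs the force term entirely---it uses the endpoint product estimate $\n{fg}_{\dB_{2,\infty}^{-1}}\leq C\n{f}_{\dB_{2,1}^0}\n{g}_{\dB_{2,\infty}^0}$ (Lemma~\ref{lemm:4}); here only $\sup_t\n{v_m(t)}_{\dB_{2,1}^0}$ enters, and this is small for short times because $v_m(t_0)=0$ and $v_m\in C(I;\dB_{2,1}^0)$. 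No smoothing bootstrap for $u_i$ is ever needed, and $f$ never appears once the difference is taken.
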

\begin{rem}
For the unconditional uniqueness of the incompressible Navier--Stokes equations,
\cite{Fur-Lem-Ter-00} considered the three dimensional case and showed the uniqueness in the class $C(I;L^3(\mathbb{R}^3))$.
Their method is directly applicable to the general higher dimensional case $C(I;L^n(\mathbb{R}^n))$ with $n \geq 3$, whereas {its proof} fails in the two-dimensional case since the key embedding $L^{\frac{n}{2}}(\mathbb{R}^n) \hookrightarrow \dot{W}^{-1,n}(\mathbb{R}^n)$ does not valid when $n=2$. 
In Proposition \ref{prop:unique}, we find that the unconditional uniqueness holds in the slightly narrower class $C(I;\dB_{2,1}^0(\mathbb{R}^2))$ than $C(I;L^2(\mathbb{R}^2))$ by following the idea of \cite{Fur-Lem-Ter-00}.  
\end{rem}
To show Proposition \ref{prop:unique}, 
we first establish some bilinear estimates.
\begin{lemm}\label{lemm:4}
Let $I=[t_0,T_0) \subset \mathbb{R}$ be an interval.
Then,
there exists a positive constant $C$ such that 
\begin{align}
    &
    \sup_{t\in I}\n{
    \int_{t_0}^t
    e^{(t-s)\Delta}
    \mathbb{P}\div(u(s) \otimes v(s)) ds
    }_{\dB_{2,\infty}^0(\mathbb{R}^2)}\\
    &\quad\leq 
    C
    \sup_{t\in I}
    \n{u(t)}_{\dB_{2,1}^0(\mathbb{R}^2)}
    \sup_{t\in I}
    \n{v(t)}_{\dB_{2,\infty}^0(\mathbb{R}^2)}
\end{align}
for all $u \in C(I;\dB_{2,1}^0(\mathbb{R}^2))$ and $v \in C(I;\dB_{2,\infty}^0(\mathbb{R}^2))$.
\end{lemm}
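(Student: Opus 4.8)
The plan is to separate the estimate into a purely spatial (fixed-time) bilinear product estimate and an application of the maximal regularity bound of Lemma~\ref{lemm:max-reg}. First I would note that, because the time exponent and the third Besov index are both $\infty$, the suprema in the statement are exactly Chemin--Lerner norms: $\sup_{t\in I}\n{F(t)}_{\dB_{2,\infty}^0(\mathbb{R}^2)}=\n{F}_{\widetilde{L^{\infty}}(I;\dB_{2,\infty}^0(\mathbb{R}^2))}$, since the two suprema commute. Hence it suffices to bound the Duhamel term in $\widetilde{L^{\infty}}(I;\dB_{2,\infty}^0)$. Applying Lemma~\ref{lemm:max-reg}\,\eqref{est:max-2} with $p=2$, $\sigma=\infty$, $s=0$, $r=r_1=\infty$ to the forcing $\mathbb{P}\div(u\otimes v)$, together with the boundedness of the operator $\mathbb{P}\div\colon\dB_{2,\infty}^{-1}\to\dB_{2,\infty}^{-2}$ (a first-order derivative composed with the order-zero Helmholtz projection), the whole problem reduces to the pointwise-in-time product estimate
\[
\n{u(t)\otimes v(t)}_{\dB_{2,\infty}^{-1}(\mathbb{R}^2)}\leq C\,\n{u(t)}_{\dB_{2,1}^0(\mathbb{R}^2)}\,\n{v(t)}_{\dB_{2,\infty}^0(\mathbb{R}^2)},
\]
after which one takes the supremum over $t\in I$ and uses $\sup_t\n{u}_{\dB_{2,1}^0}\sup_t\n{v}_{\dB_{2,\infty}^0}$. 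I emphasize that $u$ only belongs to $L^\infty_t\dB_{2,1}^0$, which is strictly weaker than the Chemin--Lerner space $\widetilde{L^{\infty}}(I;\dB_{2,1}^0)$; this is precisely why Lemma~\ref{lemm:nonlin-ndim} cannot be invoked directly and the product must be controlled at each fixed time.

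To prove the fixed-time estimate I would expand $u_k v_\ell=T_{u_k}v_\ell+T_{v_\ell}u_k+R(u_k,v_\ell)$ by Bony's decomposition and treat each piece, summing over the finitely many components $k,\ell$. For the two paraproducts the gain comes from the two-dimensional embedding $\dB_{2,\infty}^0(\mathbb{R}^2)\hookrightarrow\dB_{\infty,\infty}^{-1}(\mathbb{R}^2)$, which places the low-frequency factor in a negative-index $L^\infty$-based space; combined with the standard paraproduct continuity of Lemma~\ref{lemm:para}\,(1) this yields $\n{T_{u_k}v_\ell}_{\dB_{2,\infty}^{-1}}\leq C\n{u_k}_{\dB_{2,1}^0}\n{v_\ell}_{\dB_{2,\infty}^0}$ and, using also $\dB_{2,1}^0\hookrightarrow\dB_{2,\infty}^0$ for the high-frequency factor, $\n{T_{v_\ell}u_k}_{\dB_{2,\infty}^{-1}}\leq C\n{v_\ell}_{\dB_{2,\infty}^0}\n{u_k}_{\dB_{2,1}^0}$.

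The main obstacle is the remainder $R(u_k,v_\ell)$: both factors sit at regularity $0$ and their indices sum to exactly $0$, which is the borderline case excluded by the usual remainder bound. Here the saving must come from the $\ell^1$-summability encoded in $\dB_{2,1}^0$, and I would carry out the estimate by hand. Writing $R$ as the sum over $|i-i'|\leq1$ of the products $\Delta_i u_k\,\Delta_{i'}v_\ell$, applying H\"older's inequality $L^2\times L^2\to L^1$ to each product, and compensating the resulting $L^1\to L^2$ passage by the two-dimensional Bernstein inequality (which supplies exactly a factor $2^{q}$ for each block $\Delta_q$), one obtains
\[
2^{-q}\n{\Delta_q R(u_k,v_\ell)}_{L^2(\mathbb{R}^2)}\leq C\sum_{i\geq q-N_0}\n{\Delta_i u_k}_{L^2(\mathbb{R}^2)}\,\n{\Delta_i v_\ell}_{L^2(\mathbb{R}^2)}
\]
for some fixed $N_0$. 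The factor $2^{q}$ from Bernstein cancels the weight $2^{-q}$ of the $\dB_{2,\infty}^{-1}$ norm, leaving a weight-free diagonal sum; bounding it by $\sp{\sum_i\n{\Delta_i u_k}_{L^2}}\sp{\sup_i\n{\Delta_i v_\ell}_{L^2}}=\n{u_k}_{\dB_{2,1}^0}\n{v_\ell}_{\dB_{2,\infty}^0}$ and taking the supremum over $q$ closes the remainder estimate. Collecting the three pieces, summing over $k,\ell$, and then taking $\sup_{t\in I}$ completes the proof.
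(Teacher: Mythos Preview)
Your proposal is correct and follows essentially the same route as the paper. The only cosmetic differences are that the paper obtains the reduction to $\sup_{t\in I}\n{u(t)\otimes v(t)}_{\dB_{2,\infty}^{-1}}$ by a direct frequency-localized heat-kernel computation rather than by citing Lemma~\ref{lemm:max-reg}, and for the remainder the paper uses the embedding $\dB_{1,\infty}^{0}(\mathbb{R}^2)\hookrightarrow\dB_{2,\infty}^{-1}(\mathbb{R}^2)$ together with the borderline remainder bound $\n{R(f,g)}_{\dB_{1,\infty}^{0}}\leq C\n{f}_{\dB_{2,1}^{0}}\n{g}_{\dB_{2,\infty}^{0}}$, which is exactly your by-hand computation written in two steps.
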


\begin{proof}
As there holds 
\begin{align}
    &
    \n{\Delta_j 
    \int_{t_0}^t
    e^{(t-s)\Delta}
    \mathbb{P}\div(u(s) \otimes v(s)) ds}_{L^2(\mathbb{R}^2)}\\
    &\quad 
    \leq{} 
    C
    \int_{t_0}^t
    e^{-\frac{1}{4}2^{2j}(t-s)}2^j\n{\Delta_j(u(s) \otimes v(s))}_{L^2(\mathbb{R}^2)}
    ds\\
    &\quad 
    \leq{}
    C
    \int_{t_0}^t
    e^{-\frac{1}{4}2^{2j}(t-s)}ds
    2^j\sup_{t_0\leq s\leq t} \n{\Delta_j(u(s) \otimes v(s))}_{L^2(\mathbb{R}^2)}\\
    &\quad 
    ={}
    C
    2^{-j}\sup_{s \in I} \n{\Delta_j(u(s) \otimes v(s))}_{L^2(\mathbb{R}^2)},
\end{align}
we have
\begin{align}
    \sup_{t\in I}\n{\int_{t_0}^t
    e^{(t-s)\Delta}
    \mathbb{P}\div(u(s) \otimes v(s)) ds}_{\dB_{2,\infty}^0(\mathbb{R}^2)}
    \leq 
    C
    \sup_{t\in I}
    \n{u(t)\otimes v(t)}_{\dB_{2,\infty}^{-1}(\mathbb{R}^2)}.
\end{align}
Hence, it suffices to show 
\begin{align}
    \n{fg}_{\dB_{2,\infty}^{-1}(\mathbb{R}^2)}
    \leq
    C
    \n{f}_{\dB_{2,1}^0(\mathbb{R}^2)}
    \n{g}_{\dB_{2,\infty}^0(\mathbb{R}^2)}
\end{align}
for all $f \in \dB_{2,1}^0(\mathbb{R}^2)$ and $g \in \dB_{2,\infty}^0(\mathbb{R}^2)$.
To prove this, we use the Bony para-product decomposition:
\begin{align}
    fg=T_fg +R(f,g)+T_gf{.}
\end{align}
See appendix \ref{sec:a} for the definitions $T_fg$ and $R(f,g)$.
It follows from Lemma \ref{lemm:para} and the continuous embeddings $\dB_{2,\infty}^0(\mathbb{R}^2) \hookrightarrow \dB_{\infty,\infty}^{-1}(\mathbb{R}^2)$ and $\dB_{2,1}^0(\mathbb{R}^2) \hookrightarrow \dB_{2,\infty}^0(\mathbb{R}^2)$ that 
\begin{align}
    &
    \begin{aligned}
    \n{T_fg}_{\dB_{2,\infty}^{-1}(\mathbb{R}^2)}
    \leq{}&
    C
    \n{f}_{\dB_{\infty,\infty}^{-1}(\mathbb{R}^2)}
    \n{g}_{\dB_{2,\infty}^0(\mathbb{R}^2)}\\
    \leq{}&
    C
    \n{f}_{\dB_{2,\infty}^0(\mathbb{R}^2)}
    \n{g}_{\dB_{2,\infty}^0(\mathbb{R}^2)}\\
    \leq{}&
    C
    \n{f}_{\dB_{2,1}^0(\mathbb{R}^2)}
    \n{g}_{\dB_{2,\infty}^0(\mathbb{R}^2)}
    \end{aligned}
\end{align}
and similarly 
\begin{align}
    \n{T_gf}_{\dB_{2,\infty}^{-1}(\mathbb{R}^2)}
    \leq{}&
    C
    \n{g}_{\dB_{\infty,\infty}^{-1}(\mathbb{R}^2)}
    \n{f}_{\dB_{2,\infty}^0(\mathbb{R}^2)}\\
    \leq{}&
    C
    \n{g}_{\dB_{2,\infty}^0(\mathbb{R}^2)}
    \n{f}_{\dB_{2,\infty}^0(\mathbb{R}^2)}\\
    \leq{}&
    C
    \n{f}_{\dB_{2,1}^0(\mathbb{R}^2)}
    \n{g}_{\dB_{2,\infty}^0(\mathbb{R}^2)}.
\end{align}
By the continuous embedding $\dB_{1,\infty}^0(\mathbb{R}^2) \hookrightarrow \dB_{2,\infty}^{-1}(\mathbb{R}^2)$ and Lemma \ref{lemm:para}, we see that
\begin{align}
    \n{R(f,g)}_{\dB_{2,\infty}^{-1}(\mathbb{R}^2)}
    \leq
    C
    \n{R(f,g)}_{\dB_{1,\infty}^0(\mathbb{R}^2)}
    \leq
    C
    \n{f}_{\dB_{2,1}^0(\mathbb{R}^2)}
    \n{g}_{\dB_{2,\infty}^0(\mathbb{R}^2)}.
\end{align}
{C}ombining the above three estimates, we complete the proof.
\end{proof}

\begin{lemm}\label{lemm:5}
Let $I=[t_0,T_0) \subset \mathbb{R}$ be an interval.
Then, there exists a positive constant $C$ such that 
\begin{align}
    &
    \sup_{t \in I}
    \n{\int_{t_0}^t
    e^{(t-s)\Delta}
    \mathbb{P}\div(u(s) \otimes v(s)) ds}_{\dB_{2,\infty}^0(\mathbb{R}^2)}\\
    &\quad
    \leq{}
    C
    \sup_{t\in I}(t-t_0)^{\frac{1}{4}}\n{u(t)}_{\dB_{4,1}^0(\mathbb{R}^2)}
    \sup_{t\in I}\n{v(t)}_{\dB_{2,\infty}^0(\mathbb{R}^2)}
\end{align}
for all $u \in C((t_0,T_0);\dB_{4,1}^0(\mathbb{R}^2))$ and $v \in C(I;\dB_{2,\infty}^0(\mathbb{R}^2))$.
\end{lemm}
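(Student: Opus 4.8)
The plan is to imitate the proof of Lemma \ref{lemm:4}, the only new feature being that the higher integrability $L^4$ of $u$ costs half a derivative, which must be paid for by the temporal weight $(t-t_0)^{1/4}$. Write $A:=\sup_{t\in I}(t-t_0)^{\frac14}\n{u(t)}_{\dB_{4,1}^0(\mathbb{R}^2)}$ and $B:=\sup_{t\in I}\n{v(t)}_{\dB_{2,\infty}^0(\mathbb{R}^2)}$. Exactly as in the proof of Lemma \ref{lemm:4}, localizing in frequency and using the smoothing of the heat semigroup together with the $L^2$-boundedness of $\mathbb{P}$ and the factor $2^j$ produced by $\div$, one obtains for every $j\in\mathbb{Z}$
\begin{align}
    \n{\Delta_j \int_{t_0}^t e^{(t-s)\Delta}\mathbb{P}\div(u(s)\otimes v(s))\,ds}_{L^2(\mathbb{R}^2)}
    \leq
    C2^j\int_{t_0}^t e^{-c2^{2j}(t-s)}\n{\Delta_j(u(s)\otimes v(s))}_{L^2(\mathbb{R}^2)}\,ds.
\end{align}
This reduces the assertion to a pointwise-in-time product estimate together with one scalar time-integral bound.

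For the product estimate I would establish $u\otimes v\in\dB_{2,\infty}^{-1/2}(\mathbb{R}^2)$ in the quantitative form
\begin{align}
    \n{\Delta_j(u(s)\otimes v(s))}_{L^2(\mathbb{R}^2)}
    \leq
    C2^{\frac{j}{2}}\n{u(s)}_{\dB_{4,1}^0(\mathbb{R}^2)}\n{v(s)}_{\dB_{2,\infty}^0(\mathbb{R}^2)},
\end{align}
proved by the Bony decomposition $u\otimes v=T_u v+R(u,v)+T_v u$. For $T_u v$ I would pair $\n{S_{j-1}u}_{L^\infty}\leq C2^{j/2}\n{u}_{\dB_{4,1}^0}$ (Bernstein, $L^4\hookrightarrow L^\infty$ in dimension two) with $\n{\Delta_j v}_{L^2}\leq\n{v}_{\dB_{2,\infty}^0}$. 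For the symmetric term $T_v u$ I would use H\"older $L^2=L^4\cdot L^4$, the bound $\n{S_{j-1}v}_{L^4}\leq C2^{j/2}\n{v}_{\dB_{2,\infty}^0}$ (Bernstein and a geometric summation), and $\n{\Delta_j u}_{L^4}\leq\n{u}_{\dB_{4,1}^0}$. For the remainder $R(u,v)$ I would first estimate it in $L^{4/3}$ through H\"older $L^{4/3}=L^4\cdot L^2$ and the $\ell^1$-summability furnished by the index $1$ of $\dB_{4,1}^0$, and then convert to $L^2$ by Bernstein ($L^{4/3}\hookrightarrow L^2$, again costing $2^{j/2}$). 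Thus all three pieces carry the same factor $2^{j/2}$, which is precisely the price of the $L^4$-integrability of $u$.

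Inserting this product estimate and the weighted bound $\n{u(s)}_{\dB_{4,1}^0}\leq(s-t_0)^{-1/4}A$ into the dyadic inequality, the whole matter comes down to the uniform scalar estimate
\begin{align}
    \sup_{j\in\mathbb{Z}}\ \sup_{t>t_0}\
    2^{\frac{3j}{2}}\int_{t_0}^t e^{-c2^{2j}(t-s)}(s-t_0)^{-\frac14}\,ds
    \leq
    C.
\end{align}
Setting $\lambda=c2^{2j}$ and $\tau=t-t_0$, the left-hand side is a constant multiple of $\lambda^{3/4}\int_0^\tau e^{-\lambda(\tau-\sigma)}\sigma^{-1/4}\,d\sigma$; splitting the $\sigma$-integral at $\tau/2$ and using the elementary bound $x^{3/4}e^{-x/2}\leq C$ shows that this quantity is controlled by $(\lambda\tau)^{3/4}$ when $\lambda\tau\lesssim1$ and by $(\lambda\tau)^{-1/4}$ when $\lambda\tau\gtrsim1$, hence is uniformly bounded. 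Taking the supremum over $j$ (here the $\ell^\infty$ structure of $\dB_{2,\infty}^0$ is used) and over $t\in I$ then yields the claimed bound $CAB$.

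The hard part is exactly this final balance. Passing from $\dB_{2,1}^0$, as in Lemma \ref{lemm:4}, to the higher integrability $\dB_{4,1}^0$ inevitably produces the loss $2^{j/2}$ in the product estimate, and the argument can close only because the temporal singularity $(s-t_0)^{-1/4}$ encoded in the weighted norm is integrable against the heat kernel at precisely the critical rate; a weaker weight would make the time integral diverge.
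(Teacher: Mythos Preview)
Your proof is correct and rests on the same key ingredient as the paper's: the product estimate $\n{fg}_{\dB_{2,\infty}^{-1/2}(\mathbb{R}^2)}\leq C\n{f}_{\dB_{4,1}^0(\mathbb{R}^2)}\n{g}_{\dB_{2,\infty}^0(\mathbb{R}^2)}$, proved via the Bony decomposition exactly as you outline. The only organizational difference lies in the treatment of the time integral. The paper takes the Besov norm first and invokes the global smoothing estimate $\n{e^{(t-s)\Delta}\mathbb{P}\div F}_{\dB_{2,\infty}^0}\leq C(t-s)^{-3/4}\n{F}_{\dB_{2,\infty}^{-1/2}}$, reducing matters to the Beta-type integral $\int_{t_0}^t(t-s)^{-3/4}(s-t_0)^{-1/4}\,ds$, which is a finite constant. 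You instead carry the dyadic localization through the time integral, retaining the exponential kernel $e^{-c2^{2j}(t-s)}$ and proving the uniform scalar bound $\sup_{j,t}2^{3j/2}\int_{t_0}^t e^{-c2^{2j}(t-s)}(s-t_0)^{-1/4}\,ds\leq C$ directly. The two routes are equivalent: your scalar estimate is precisely the dyadic shadow of the paper's Beta integral, and neither argument gains or loses anything over the other.
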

\begin{proof}
By the smoothing estimate for the kernel of $e^{(t-s)\Delta}\mathbb{P}\div$ that 
\begin{align}
    &\n{\int_{t_0}^t
    e^{(t-s)\Delta}
    \mathbb{P}\div(u(s) \otimes v(s)) ds}_{\dB_{2,\infty}^0(\mathbb{R}^2)}\\
    &\quad\leq{}
    C
    \int_{t_0}^t
    \n{e^{(t-s)\Delta}\mathbb{P} \div(u(s) \otimes v(s))}_{\dB_{2,\infty}^0(\mathbb{R}^2)} ds\\
    &\quad\leq{}
    C
    \int_{t_0}^t
    (t-s)^{-\frac{3}{4}}
    \n{u(s) \otimes v(s)}_{\dB_{2,\infty}^{-\frac{1}{2}}(\mathbb{R}^2)}
    ds\\
    &\quad\leq{}
    C
    \int_{t_0}^t
    (t-s)^{-\frac{3}{4}}(s-t_0)^{-\frac{1}{4}}
    ds
    \sup_{t_0 < s \leq t}(s-t_0)^{\frac{1}{4}}\n{u(s) \otimes v(s)}_{\dB_{2,\infty}^{-\frac{1}{2}}(\mathbb{R}^2)}
    \\
    &\quad\leq{}
    C
    \sup_{t_0 < s \leq t}(s-t_0)^{\frac{1}{4}}\n{u(s) \otimes v(s)}_{\dB_{2,\infty}^{-\frac{1}{2}}(\mathbb{R}^2)}.
\end{align}
Hence, it suffices to show 
\begin{align}
    \n{fg}_{\dB_{2,\infty}^{-\frac{1}{2}}(\mathbb{R}^2)}
    \leq
    C
    \n{f}_{\dB_{4,1}^0(\mathbb{R}^2)}
    \n{g}_{\dB_{2,\infty}^0(\mathbb{R}^2)}.
\end{align}
Similarly to the argument in the proof of Lemma \ref{lemm:4}, we use the para-product decomposition.
It follows from Lemma \ref{lemm:para} and the continuous embeddings $\dB_{4,1}^0(\mathbb{R}^2) \hookrightarrow \dB_{4,\infty}^0(\mathbb{R}^2) \hookrightarrow \dB_{\infty,\infty}^{-\frac{1}{2}}(\mathbb{R}^2)$ and $\dB_{2,\infty}^0(\mathbb{R}^2) \hookrightarrow \dB_{4,\infty}^{-\frac{1}{2}}(\mathbb{R}^2)$ that 
\begin{align}
    &
    \n{T_fg}_{\dB_{2,\infty}^{-\frac{1}{2}}(\mathbb{R}^2)}
    \leq
    C
    \n{f}_{\dB_{\infty,\infty}^{-\frac{1}{2}}(\mathbb{R}^2)}
    \n{g}_{\dB_{2,\infty}^0(\mathbb{R}^2)}
    \leq
    C
    \n{f}_{\dB_{4,1}^0(\mathbb{R}^2)}
    \n{g}_{\dB_{2,\infty}^0(\mathbb{R}^2)}\\
    &
    \n{T_gf}_{\dB_{2,\infty}^{-\frac{1}{2}}(\mathbb{R}^2)}
    \leq
    C
    \n{g}_{\dB_{4,\infty}^{-\frac{1}{2}}(\mathbb{R}^2)}
    \n{f}_{\dB_{4,\infty}^0(\mathbb{R}^2)}
    \leq
    C
    \n{g}_{\dB_{2,\infty}^0(\mathbb{R}^2)}
    \n{f}_{\dB_{4,1}^0(\mathbb{R}^2)}.
\end{align}
By the continuous embedding $\dB_{\frac{4}{3},\infty}^0(\mathbb{R}^2) \hookrightarrow \dB_{2,\infty}^{-\frac{1}{2}}(\mathbb{R}^2)$ and Lemma \ref{lemm:para}, we see that
\begin{align}
    \n{R(f,g)}_{\dB_{2,\infty}^{-\frac{1}{2}}(\mathbb{R}^2)}
    \leq
    C
    \n{R(f,g)}_{\dB_{\frac{4}{3},\infty}^0(\mathbb{R}^2)}
    \leq
    C
    \n{f}_{\dB_{4,1}^0(\mathbb{R}^2)}
    \n{g}_{\dB_{2,\infty}^0(\mathbb{R}^2)}.
\end{align}
combining the above three estimates, we complete the proof.
\end{proof}
Now, we present the proof of Proposition \ref{prop:unique}.
\begin{proof}[Proof of Proposition \ref{prop:unique}]
Let $u_1$ and $u_2$ be a solution to \eqref{eq:NS_I} with the same initial data $a\in {\dB_{2,1}^0}(\mathbb{R}^2)$.
Let $v_m(t):=u_m(t)-e^{(t-t_0)\Delta}a$ ($m=1,2$) and $w(t):= v_1(t) - v_2(t)$.
Then, it holds
\begin{align}
    w(t) = 
    &-
    \int_{t_0}^t
    e^{(t-s)\Delta}
    \mathbb{P}\div
    \sp{e^{(s-t_0)\Delta}a \otimes w(s)} ds\\
    &
    -
    \int_{t_0}^t
    e^{(t-s)\Delta}
    \mathbb{P}\div
    \sp{w(s) \otimes e^{(s-t_0)\Delta}a} ds\\
    &-
    \int_{t_0}^t
    e^{(t-s)\Delta}
    \mathbb{P}\div
    \sp{
    v_1(s) \otimes w(s)
    } ds\\
    &
    -
    \int_{t_0}^t
    e^{(t-s)\Delta}
    \mathbb{P}\div
    \sp{
    w(s) \otimes v_2(s)
    } ds.
\end{align}
It follows from Lemmas \ref{lemm:4} and \ref{lemm:5} that
{for any $T \in (t_0,T_0)$,} 
\begin{align}\label{pf:thm2:1}
\begin{split}
    \sup_{t_0<t<T}
    \n{w(t)}_{\dB_{2,\infty}^0(\mathbb{R}^2)}
    \leq{}&
    C_0
    \sup_{t_0<t<T}
    (t-t_0)^{\frac{1}{4}}
    \n{e^{(t-t_0)\Delta}a}_{\dB_{4,1}^0(\mathbb{R}^2)}
    \sup_{t_0<t<T}
    \n{w(t)}_{\dB_{2,\infty}^0(\mathbb{R}^2)}\\
    &
    +
    C_0
    \sum_{m=1}^2
    \sup_{t_0<t<T}
    \n{v_m(t)}_{\dB_{2,1}^0(\mathbb{R}^2)}
    \sup_{t_0<t<T}
    \n{w(t)}_{\dB_{2,\infty}^0(\mathbb{R}^2)}
\end{split}
\end{align}
with some positive constant $C_0$ independent of $T$.
As the density argument and $v_m(t_0)=0$ yields
\begin{align}
    \lim_{T\downarrow t_0}
    \sp{
    \sup_{t_0<t<T}
    (t-t_0)^{\frac{1}{4}}
    \n{e^{(t-t_0)\Delta}a}_{\dB_{4,1}^0(\mathbb{R}^2)}
    +
    \sum_{m=1}^2
    \sup_{t_0<t<T}
    \n{v_m(t)}_{\dB_{2,1}^0(\mathbb{R}^2)}
    }
    =0,
\end{align}
there exists a time $t_0 < T_1\leq {T_0}$ such that
\begin{align}\label{pf:thm2:2}
    \sup_{t_0<t<T_1}
    (t-t_0)^{\frac{1}{4}}
    \n{e^{(t-t_0)\Delta}a}_{\dB_{4,1}^0(\mathbb{R}^2)}
    +
    \sum_{m=1}^2
    \sup_{t_0<t<T_1}
    \n{v_m(t)}_{\dB_{2,1}^0(\mathbb{R}^2)}
    \leq 
    \frac{1}{4C_0}.
\end{align}
Then, we see by \eqref{pf:thm2:1} and \eqref{pf:thm2:2} that 
\begin{align}
\sup_{t_0<t<T_1}
\n{w(t)}_{\dB_{2,\infty}^0(\mathbb{R}^2)}
\leq 
\frac{1}{2}
\sup_{t_0<t<T_1}
\n{w(t)}_{\dB_{2,\infty}^0(\mathbb{R}^2)},
\end{align}
which implies $w(t)=0$ for all $t_0\leq t \leq T_1$.
If $T_1 <T_0$. then we repeat the same procedure many times to obtain $w(t)=0$ for all  $t \in I$.
Thus, we complete the proof.
\end{proof}
\subsection{Proof of Theorem \ref{thm:2}}
Now, we are in a position to present the proof of Theorem \ref{thm:2}.
\begin{proof}[Proof of Theorem \ref{thm:2}]
Let $1 \leq p \leq 2$ and let $\varepsilon_0$ be a positive constant determined in Proposition \ref{prop:non-per}.
Let $0<\delta \leq \varepsilon_0$ and $0<T\leq 2^{\frac{1}{\delta^2}}$.
Suppose by contradiction that the external force $f_{\delta}$ appearing in Proposition \ref{prop:non-per} generates a $T$-periodic mild solution $u_{{\rm per},\delta} \in C(\mathbb{R} ; \dB_{2,1}^0(\mathbb{R}^2))$ to \eqref{eq:NS_P} satisfying  
\begin{align}\label{est:u(t_0)}
    \n{u_{{\rm per},\delta}(t_0)}_{\dB_{2,1}^0(\mathbb{R}^2)} \leq \varepsilon_0
\end{align}
for some $t_0 \in \mathbb{R}$.
It is easy to see that $u_{{\rm per},\delta}$ is also a mild solution to \eqref{eq:NS_I} with the initial data $a=u_{{\rm per},\delta}(t_0)$.
On the other hand, by Proposition \ref{prop:non-per}, there exists a mild solution $u_{\delta}[u_{{\rm per},\delta}(t_0)] \in \widetilde{C}([t_0,t_0+k_{\delta,T}T];\dB_{2,1}^0(\mathbb{R}^2))$ to \eqref{eq:NS_I} with $a=u_{{\rm per},\delta}(t_0)$ satisfying
\begin{align}
    \n{u_{\delta}[u_{{\rm per},\delta}(t_0)](t_0+k_{\delta,T}T)}_{\dB_{2,1}^0(\mathbb{R}^2)} \geq 2\varepsilon_0,
\end{align}
which and \eqref{est:u(t_0)} imply 
\begin{align}
    u_{\delta}[u_{{\rm per},\delta}(t_0)](t_0)
    \neq
    u_{\delta}[u_{{\rm per},\delta}(t_0)](t_0+k_{\delta,T}T).
\end{align}
Using Proposition \ref{prop:unique}, we see by $u_{\delta}[u_{{\rm per},\delta}(t_0)](t_0) = u_{{\rm per},\delta}(t_0)$ that 
\begin{align}\label{uniq}
    u_{\delta}[u_{{\rm per},\delta}(t_0)](t)=u_{{\rm per},\delta}(t)
\end{align}
holds for all $t_0 \leq t \leq t_0 + k_{\delta,T}T$.
From \eqref{est:u(t_0)} and \eqref{uniq} with $t=t_0+k_{\delta,T}T$, we have
\begin{align}
    u_{{\rm per},\delta}(t_0)
    =
    u_{\delta}[u_{{\rm per},\delta}(t_0)](t_0)
    \neq 
    u_{\delta}[u_{{\rm per},\delta}(t_0)](t_0+k_{\delta,T}T)
    =
    u_{{\rm per},\delta}(t_0+k_{\delta,T}T)
\end{align}
which yields a contradiction to the periodicity of $u_{{\rm per},\delta}$. 
Thus, we complete the proof.
\end{proof}

\noindent
{\bf Data availability.} \\
Data sharing not applicable to this article as no datasets were generated or analyzed during the current study.

\noindent
{\bf Conflict of interest.} \\
The author has declared no conflicts of interest.

\noindent
{\bf Acknowledgments.} \\
The author was supported by Grant-in-Aid for Research Activity Start-up, Grant Number JP23K19011.

\appendix
\def\thesection{\Alph{section}}
\section{Remarks on the paradifferential calculus}\label{sec:a}
In this appendix, we consider the Bony decomposition of the product $fg$ for two functions $f$ and $g$:
\begin{align}
    fg = T_fg + R(f,g) + T_gf,
\end{align}
where we have set 
\begin{align}
    T_fg := 
    \sum_{k \in \mathbb{Z}} 
    \left( \sum_{\ell \leq k-3} \Delta_{\ell} f \right) 
    \Delta_k g,
    \qquad
    R(f,g) :=
    \sum_{k \in \mathbb{Z}}
    \sum_{|k-\ell|\leq 2}
    \Delta_kf 
    \Delta_{\ell}g.
\end{align}
We first recall the basic estimates for $T_fg$ and $R(f,g)$ in Besov and Chemin--Lerner spaces. 
\begin{lemm}\label{lemm:para}
Let $n \in \mathbb{N}$ and let $I \subset \mathbb{R}$ be an interval.
Then, the following statements hold:
\begin{enumerate}
    \item 
    Let $1 \leq p,p_1,p_2,r,r_1,r_2,\sigma,\sigma_1 \leq \infty$ and $s,s_1,s_2 \in \mathbb{R}$ satisfy
    \begin{align}
        \frac{1}{p} = \frac{1}{p_1} + \frac{1}{p_2},\quad
        \frac{1}{r} = \frac{1}{r_1} + \frac{1}{r_2},\quad
        {\frac{1}{\sigma} \leq \frac{1}{\sigma_1} + \frac{1}{\sigma_2},}\quad
        s=s_1+s_2, \quad 
        s_1 < 0.
    \end{align}
    Then, there exists a positive constant $K_1=K_1(\sigma_1,s_1,s_2)$ such that 
    \begin{align}\label{est:T-1}
        \n{T_fg}_{\dB_{p,\sigma}^s(\mathbb{R}^n)}
        \leq
        K_1
        \n{f}_{\dB_{p_1,\sigma_1}^{s_1}(\mathbb{R}^n)}
        \n{g}_{\dB_{p_2,\sigma}^{s_2}(\mathbb{R}^n)}
    \end{align}
    for all 
    $f \in \dB_{p_1,\sigma_1}^{s_1}(\mathbb{R}^n)$ 
    and 
    $g \in \dB_{p_2,\sigma}^{s_2}(\mathbb{R}^n)$,
    as well as 
    \begin{align}\label{est:T-2}
        \n{T_FG}_{\widetilde{L^r}(I;\dB_{p,\sigma}^s(\mathbb{R}^n))}
        \leq
        K_1
        \n{F}_{\widetilde{L^{r_1}}(I;\dB_{p_1,\sigma_1}^{s_1}(\mathbb{R}^n))}
        \n{G}_{\widetilde{L^{r_2}}(I;\dB_{p_2,\sigma}^{s_2}(\mathbb{R}^n))}
    \end{align}
    for all 
    $F \in \widetilde{L^{r_1}}(I;\dB_{p_1,\sigma_1}^{s_1}(\mathbb{R}^n))$ 
    and 
    $G \in \widetilde{L^{r_2}}(I;\dB_{p_2,\sigma}^{s_2}(\mathbb{R}^n))$.
    \item 
    Let $1 \leq p,p_1,p_2,r,r_1,r_2,\sigma,\sigma_1,\sigma_2 \leq \infty$ and $s,s_1,s_2 \in \mathbb{R}$ satisfy
    \begin{align}
        \frac{1}{p} = \frac{1}{p_1} + \frac{1}{p_2},\quad
        \frac{1}{r} = \frac{1}{r_1} + \frac{1}{r_2},\quad
        \frac{1}{\sigma} \leq \frac{1}{\sigma_1} + \frac{1}{\sigma_2},\quad 
        s=s_1+s_2>0.
    \end{align}
    Then, there exists a positive constant $K_2=K_2(s, s_1,s_2)$ such that 
    \begin{align}\label{est:R-1}
        \n{R(f,g)}_{\dB_{p,\sigma}^s(\mathbb{R}^n)}
        \leq
        K_2
        \n{f}_{\dB_{p_1,\sigma_1}^{s_1}(\mathbb{R}^n)}
        \n{g}_{\dB_{p_2,\sigma_2}^{s_2}(\mathbb{R}^n)}
    \end{align}
    for all 
    $f \in \dB_{p_1,\sigma_1}^{s_1}(\mathbb{R}^n)$ 
    and 
    $g \in \dB_{p_2,\sigma_2}^{s_2}(\mathbb{R}^n)$,
    as well 
    as 
    \begin{align}\label{est:R-2}
        \n{R(F,G)}_{\widetilde{L^r}(I;\dB_{p,\sigma}^s(\mathbb{R}^n))}
        \leq
        K_2
        \n{F}_{\widetilde{L^{r_1}}(I;\dB_{p_1,\sigma_1}^{s_1}(\mathbb{R}^n))}
        \n{G}_{\widetilde{L^{r_2}}(I;\dB_{p_2,\sigma_2}^{s_2}(\mathbb{R}^n))}
    \end{align}
    for all 
    $F \in \widetilde{L^{r_1}}(I;\dB_{p_1,\sigma_1}^{s_1}(\mathbb{R}^n))$ 
    and 
    $G \in \widetilde{L^{r_2}}(I;\dB_{p_2,\sigma_2}^{s_2}(\mathbb{R}^n))$.
\end{enumerate}
\end{lemm}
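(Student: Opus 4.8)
The plan is to estimate the two pieces $T_fg$ and $R(f,g)$ separately, exploiting their frequency-localization and reducing everything to discrete H\"older and Young inequalities on the dyadic sequences defining the Besov and Chemin--Lerner norms. Writing $S_kf:=\sum_{\ell\leq k}\Delta_\ell f$ so that $T_fg=\sum_k S_{k-3}f\,\Delta_k g$, I would first record the two spectral facts that drive the whole argument: each summand $S_{k-3}f\,\Delta_k g$ is frequency-supported in an annulus of size $2^k$, whence $\Delta_j(S_{k-3}f\,\Delta_k g)=0$ unless $|j-k|\leq N_0$ for a fixed absolute integer $N_0$; and each summand $\Delta_k f\,\Delta_\ell g$ with $|k-\ell|\leq 2$ of $R(f,g)$ is supported in a ball of radius $\sim 2^k$, whence $\Delta_j$ of it vanishes unless $k\geq j-N_0$.

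For the paraproduct bound \eqref{est:T-1}, I would first control the low-frequency factor: setting $a_\ell:=2^{s_1\ell}\n{\Delta_\ell f}_{L^{p_1}}$ (so that $\n{(a_\ell)_\ell}_{\ell^{\sigma_1}}=\n{f}_{\dB_{p_1,\sigma_1}^{s_1}}$) and using $s_1<0$, the sum $\n{S_{k-3}f}_{L^{p_1}}\leq\sum_{\ell\leq k-3}2^{-s_1\ell}a_\ell$ is controlled by H\"older in $\ell$ by $K(\sigma_1,s_1)\,2^{-s_1k}\n{f}_{\dB_{p_1,\sigma_1}^{s_1}}$, the convergence of the geometric factor being exactly where the sign condition $s_1<0$ and the dependence of $K_1$ on $\sigma_1,s_1$ enter. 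Combining this with H\"older in space ($1/p=1/p_1+1/p_2$), the identity $s=s_1+s_2$, and the restriction $|j-k|\leq N_0$, I obtain $2^{sj}\n{\Delta_j T_fg}_{L^p}\leq C\n{f}_{\dB_{p_1,\sigma_1}^{s_1}}\sum_{|j-k|\leq N_0}2^{s_2k}\n{\Delta_k g}_{L^{p_2}}$. Taking the $\ell^\sigma$-norm in $j$ and using that convolution with the finitely supported kernel $\mathbf{1}_{|m|\leq N_0}$ preserves $\ell^\sigma$ yields \eqref{est:T-1}; the Chemin--Lerner statement \eqref{est:T-2} follows by the identical computation with an extra application of H\"older in time ($1/r=1/r_1+1/r_2$) performed at the level of $\n{\Delta_k(\cdot)}_{L^r(I;L^p)}$.

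For the remainder bound \eqref{est:R-1}, I would set $a_k:=2^{s_1k}\n{\Delta_k f}_{L^{p_1}}\in\ell^{\sigma_1}$ and $b_\ell:=2^{s_2\ell}\n{\Delta_\ell g}_{L^{p_2}}\in\ell^{\sigma_2}$. H\"older in space together with $|k-\ell|\leq 2$ and $s=s_1+s_2$ gives $2^{sj}\n{\Delta_j R(f,g)}_{L^p}\leq C\sum_{k\geq j-N_0}2^{s(j-k)}a_k\,\tilde b_k$, where $\tilde b_k:=\sum_{|\ell-k|\leq 2}b_\ell$ and I have absorbed $2^{s_2(k-\ell)}\leq 2^{2|s_2|}$. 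Here the sign condition $s>0$ is essential: it makes $\sum_{k\geq j-N_0}2^{s(j-k)}$ converge and supplies the factor responsible for the $s$-dependence of $K_2$. Finally I would bound $\n{(a_k\tilde b_k)_k}_{\ell^\tau}$ by H\"older with $1/\tau=1/\sigma_1+1/\sigma_2$, use the embedding $\ell^\tau\hookrightarrow\ell^\sigma$ (valid since $1/\sigma\leq 1/\tau$), and apply discrete Young against the summable weight $(2^{sm}\mathbf{1}_{m\leq N_0})_m$ to reach \eqref{est:R-1}; \eqref{est:R-2} is obtained by inserting the time-H\"older step as before.

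I do not anticipate a genuine obstacle, since this is classical Bony paradifferential calculus; the only delicate points are the careful bookkeeping of the spectral-support conditions yielding the index restrictions $|j-k|\leq N_0$ and $k\geq j-N_0$, and the observation that the hypotheses $s_1<0$ (for $T$) and $s>0$ (for $R$) are precisely what make the geometric series converge. These two sign conditions are therefore indispensable and account for the stated dependence of $K_1$ on $s_1$ and of $K_2$ on $s$.
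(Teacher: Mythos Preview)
Your proposal is correct and follows exactly the classical Bony paradifferential argument. The paper itself does not give an independent proof of this lemma; it simply refers to \cite{Bah-Che-Dan-11}*{Theorems 2.47 and 2.52} and then records the explicit form of the constants $K_1=C_1\,s_1^{-(1-1/\sigma_1)}2^{3|s_2|}$ and $K_2=C_2^{|s_1|+|s_2|}/s$ that fall out of that computation. Your sketch is precisely that argument, and your identification of the two sign conditions $s_1<0$ and $s>0$ as the sources of the geometric-series convergence (hence of the dependence of $K_1$ on $s_1,\sigma_1$ and of $K_2$ on $s$) matches the paper's recorded constants.
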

One may prove Lemma \ref{lemm:para} along the arguments in \cite{Bah-Che-Dan-11}*{Theorems 2.47 and 2.52}.
It follows from the proof of estimates \eqref{est:T-1}, \eqref{est:T-2} and \eqref{est:R-1}, \eqref{est:R-2} that there exists absolute positive constants $C_1$ and $C_2$ such that we may choose $K_1$ and $K_2$ as
\begin{align}\label{K_1-K_2}
    K_1(\sigma_1,s_1,s_2)
    ={}
    \frac{C_1}{s_1^{1 - \frac{1}{\sigma_1}}}
    2^{3|s_2|},\qquad
    K_2(s,s_1,s_2)
    ={}
    \frac{C_2^{|s_1|+|s_2|}}{s}.
\end{align}
While we see from \eqref{K_1-K_2} that $K_2 =O(s^{-1})$ as $s \downarrow 0$, 
we may relax the singularity $s^{-1}$ to $s^{-\frac{1}{\sigma}}$ if $\sigma_1$ and $\sigma_2$ satisfy a strict condition {in the following lemma, which is crucial in the proof of Lemma \ref{lemm:XYZ}}.
\begin{lemm}\label{lemm:para'}
Let $1 \leq p,p_1,p_2,r,r_1,r_2,\sigma,\sigma_1,\sigma_2 \leq \infty$ and $s,s_1,s_2 \in \mathbb{R}$ satisfy
\begin{align}
    \frac{1}{p} = \frac{1}{p_1} + \frac{1}{p_2},\quad
    \frac{1}{r} = \frac{1}{r_1} + \frac{1}{r_2},\quad
    1 \leq \frac{1}{\sigma_1} + \frac{1}{\sigma_2},\quad 
    s=s_1+s_2>0.
\end{align}
Then, there holds 
\begin{align}\label{est:R-2}
    \n{R(f,g)}_{\widetilde{L^r}(I;\dB_{p,\sigma}^s(\mathbb{R}^n))}
    \leq
    K_3
    \n{f}_{\widetilde{L^{r_1}}(I;\dB_{p_1,\sigma_1}^{s_1}(\mathbb{R}^n))}
    \n{g}_{\widetilde{L^{r_2}}(I;\dB_{p_2,\sigma_2}^{s_2}(\mathbb{R}^n))}
\end{align}
for all 
$f \in \widetilde{L^{r_1}}(I;\dB_{p_1,\sigma_1}^{s_1}(\mathbb{R}^n))$ 
and 
$g \in \widetilde{L^{r_2}}(I;\dB_{p_2,\sigma_2}^{s_2}(\mathbb{R}^n))$,
where the positive constant $K_3=K_3(\sigma,s, s_1,s_2)$ is given by 
\begin{align}
    K_3(\sigma,s,s_1,s_2)
    ={}
    \frac{C_3^{|s_1|+|s_2|}}{s^{\frac{1}{\sigma}}}
\end{align}
for some absolute positive constant $C_3$.
\end{lemm}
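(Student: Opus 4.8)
The plan is to follow the standard proof of the remainder estimate in Lemma \ref{lemm:para} (2), but to redistribute the summability budget among the $\ell^\sigma$-indices in a way that exploits the hypothesis $\frac{1}{\sigma_1}+\frac{1}{\sigma_2}\geq 1$, so that the geometric kernel produced by the low-frequency gain need only lie in $\ell^\sigma$ rather than in a smaller space. This is exactly what converts the generic singularity $s^{-1}$ of Lemma \ref{lemm:para} (2) into the improved $s^{-1/\sigma}$.

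First I would expand, for fixed $j \in \mathbb{Z}$,
\[
    \Delta_j R(f,g) = \sum_{k \in \mathbb{Z}} \sum_{|k-\ell|\leq 2} \Delta_j\sp{\Delta_k f\, \Delta_\ell g},
\]
and use that each summand $\Delta_k f\,\Delta_\ell g$ with $|k-\ell|\leq 2$ has Fourier support in a \emph{ball} of radius $\sim 2^{k}$ (not an annulus). Hence there is a fixed integer $N_0$ such that $\Delta_j\sp{\Delta_k f\,\Delta_\ell g}=0$ unless $k \geq j - N_0$. Taking the $L^r(I;L^p)$-norm and applying H\"older in time ($\tfrac1r=\tfrac1{r_1}+\tfrac1{r_2}$) and in space ($\tfrac1p=\tfrac1{p_1}+\tfrac1{p_2}$), together with the $L^p$-boundedness of $\Delta_j$, gives
\[
    \n{\Delta_j\sp{\Delta_k f\, \Delta_\ell g}}_{L^r(I;L^p)} \leq C\,\n{\Delta_k f}_{L^{r_1}(I;L^{p_1})}\,\n{\Delta_\ell g}_{L^{r_2}(I;L^{p_2})}.
\]

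Then I would pass to sequences. Setting $a_k := 2^{s_1 k}\n{\Delta_k f}_{L^{r_1}(I;L^{p_1})}$ and $b_\ell := 2^{s_2\ell}\n{\Delta_\ell g}_{L^{r_2}(I;L^{p_2})}$, and using $|k-\ell|\leq 2$ to replace $2^{-s_1 k - s_2\ell}$ by $2^{-s k}$ at the cost of a factor $2^{2|s_2|}$, the weighted bound becomes
\[
    2^{sj}\n{\Delta_j R(f,g)}_{L^r(I;L^p)} \leq C\,2^{2|s_2|}\sum_{m \geq -N_0} 2^{-s m}\, a_{j+m}\, c_{j+m},
\]
where $c_k := \sum_{|k-\ell|\leq 2} b_\ell$ satisfies $\n{c}_{\ell^{\sigma_2}} \leq C\n{b}_{\ell^{\sigma_2}}$. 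This exhibits the right-hand side as a discrete convolution (up to reflection) of the geometric kernel $\phi_m := 2^{-sm}\mathbf{1}_{m\geq -N_0}$ with the sequence $d_k := a_k c_k$.

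The decisive step, and the one I expect to be the main obstacle to phrasing cleanly, is the choice of summability exponents. By H\"older for sequences, $\n{d}_{\ell^{\tau}} \leq \n{a}_{\ell^{\sigma_1}}\n{c}_{\ell^{\sigma_2}}$ with $\tfrac1\tau=\tfrac1{\sigma_1}+\tfrac1{\sigma_2}$; the hypothesis $\tfrac1{\sigma_1}+\tfrac1{\sigma_2}\geq 1$ means $\tau \leq 1$, so the embedding $\ell^{\tau}\hookrightarrow \ell^1$ yields $d \in \ell^1$. I would then apply Young's convolution inequality in the form $\ell^\sigma * \ell^1 \hookrightarrow \ell^\sigma$ and take the $\ell^\sigma$-norm in $j$, obtaining
\[
    \n{R(f,g)}_{\widetilde{L^r}(I;\dB_{p,\sigma}^s(\mathbb{R}^n))} \leq C\,2^{2|s_2|}\,\n{\phi}_{\ell^\sigma}\,\n{f}_{\widetilde{L^{r_1}}(I;\dB_{p_1,\sigma_1}^{s_1}(\mathbb{R}^n))}\,\n{g}_{\widetilde{L^{r_2}}(I;\dB_{p_2,\sigma_2}^{s_2}(\mathbb{R}^n))}.
\]
Finally, summing the geometric series for $s>0$,
\[
    \n{\phi}_{\ell^\sigma} = \sp{\sum_{m\geq -N_0}2^{-s\sigma m}}^{\frac1\sigma} = 2^{sN_0}\sp{1 - 2^{-s\sigma}}^{-\frac1\sigma} \leq C\, 2^{N_0 s}\, s^{-\frac1\sigma},
\]
which is where both the singularity $s^{-1/\sigma}$ and the exponential factor $C_3^{|s_1|+|s_2|}$ originate (collecting $2^{2|s_2|}$ and $2^{N_0 s}\leq 2^{N_0(|s_1|+|s_2|)}$). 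This delivers the claimed estimate with the stated $K_3(\sigma,s,s_1,s_2)=C_3^{|s_1|+|s_2|}/s^{1/\sigma}$. Had one merely assumed $\tfrac1{\sigma_1}+\tfrac1{\sigma_2}$ arbitrary, as in Lemma \ref{lemm:para} (2), one would be forced into Young's inequality $\ell^\rho * \ell^\tau \hookrightarrow \ell^\sigma$ with $\tfrac1\rho = 1 + \tfrac1\sigma - \tfrac1\tau$, producing the worse factor $s^{-1/\rho}$; the sole role of the condition $\tfrac1{\sigma_1}+\tfrac1{\sigma_2}\geq 1$ is precisely that it permits the optimal choice $\rho=\sigma$.
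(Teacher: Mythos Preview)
Your proof is correct and follows essentially the same route as the paper's: the same Fourier-support truncation $k\geq j-N_0$ (the paper uses $N_0=4$), the same reduction to a discrete convolution of a geometric kernel with the product sequence $a_kc_k$, and the same application of Young's inequality in the form $\ell^\sigma * \ell^1 \hookrightarrow \ell^\sigma$, with the hypothesis $\tfrac{1}{\sigma_1}+\tfrac{1}{\sigma_2}\geq 1$ used exactly to place $a_kc_k$ in $\ell^1$. Your exposition is in fact more explicit than the paper's about why the condition $\tfrac{1}{\sigma_1}+\tfrac{1}{\sigma_2}\geq 1$ is what upgrades $s^{-1}$ to $s^{-1/\sigma}$.
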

\begin{proof}
By $R(f,g)=R(g,f)$,
Applying $\Delta_j$ to $R(f,g)$, we see that 
\begin{align}
    \Delta_j
    R(f,g) 
    =
    \Delta_j
    \sum_{k \geq j-4}
    \sum_{|k-\ell|\leq 2}
    \Delta_kf 
    \Delta_{\ell}g,
\end{align}
which implies 
\begin{align}
    2^{sj}
    \n{\Delta_j
    R(f,g)}_{L^r(I;L^p(\mathbb{R}^n))}
    \leq{}&
    C2^{2|s_2|}
    \sum_{k \geq j-4}
    2^{s(j-k)}
    2^{s_1k}
    \n{\Delta_kf}_{L^{r_1}(I;L^{p_1}(\mathbb{R}^n))}\\
    &\quad\times
    \sum_{|k-\ell|\leq 2} 
    2^{s_2\ell}
    \n{\Delta_{\ell}g}_{L^{r_2}(I;L^{p_2}(\mathbb{R}^n))}.
\end{align}
Taking $\ell^{\sigma}(\mathbb{Z})$-norm of this and using the Hausdorff--Young inequality for the discrete convolution that 
\begin{align}
    &\n{
    R(f,g)
    }_{\widetilde{L^r}(I;\dB_{p,\sigma}^s(\mathbb{R}^n))}\\
    &\quad 
    \leq{}
    C2^{2|s_2|}
    \sp{
    \sum_{j \leq 4}
    2^{s\sigma j}
    }^{\frac{1}{\sigma}} \\
    &\qquad \times
    \n{
    \mp{
    2^{s_1k}
    \n{\Delta_kf}_{L^{r_1}(I;L^{p_1}(\mathbb{R}^n))}
    \sum_{|k-\ell|\leq 2}
    2^{s_2\ell}
    \n{\Delta_{\ell}g}_{L^{r_2}(I;L^{p_2}(\mathbb{R}^n))}}_{k\in \mathbb{Z}}
    }_{\ell^1(\mathbb{Z})}\\
    &\quad 
    \leq 
    C2^{2|s_2|}2^{4s}s^{-\frac{1}{\sigma}}
    \n{f}_{\widetilde{L^{r_1}}(I;\dB_{p_1,\sigma_1}^{s_1}(\mathbb{R}^n))}
    \n{g}_{\widetilde{L^{r_2}}(I;\dB_{p_2,\sigma_2}^{s_2}(\mathbb{R}^n))},
\end{align}
which completes the proof.
\end{proof}

\begin{bibdiv}
\begin{biblist}
\bib{Bah-Che-Dan-11}{book}{
   author={Bahouri, Hajer},
   author={Chemin, Jean-Yves},
   author={Danchin, Rapha\"{e}l},
   title={Fourier analysis and nonlinear partial differential equations},
   series={Grundlehren der mathematischen Wissenschaften [Fundamental Principles of Mathematical Sciences]},
   volume={343},
   publisher={Springer, Heidelberg},
   date={2011},
   pages={xvi+523},
}
\bib{Bej-Tao-06}{article}{
   author={Bejenaru, Ioan},
   author={Tao, Terence},
   title={Sharp well-posedness and ill-posedness results for a quadratic
   non-linear Schr\"{o}dinger equation},
   journal={J. Funct. Anal.},
   volume={233},
   date={2006},
   pages={228--259},
}
\bib{Bou-Pav-08}{article}{
   author={Bourgain, Jean},
   author={Pavlovi\'{c}, Nata\v{s}a},
   title={Ill-posedness of the Navier--Stokes equations in a critical space
   in 3D},
   journal={J. Funct. Anal.},
   volume={255},
   date={2008},
   pages={2233--2247},
}
\bib{Che-Ler-95}{article}{
   author={Chemin, J.-Y.},
   author={Lerner, N.},
   title={Flot de champs de vecteurs non lipschitziens et \'{e}{q}uations de
   Navier--Stokes},
   language={French},
   journal={J. Differential Equations},
   volume={121},
   date={1995},
   pages={314--328},
}
\bib{Fujii-pre-1}{article}{
   author={Fujii, Mikihro},
   title={Counter examples for bilinear estimates related to the two-dimensional stationary Navier--Stokes equations},
   journal={arXiv:2304.08355},
}
\bib{Fujii-pre-2}{article}{
   author={Fujii, Mikihro},
   title={Ill-posedness of the two-dimensional stationary Navier--Stokes equations on the whole plane},
   journal={arXiv:2304.08367},
}
\bib{Fur-Lem-Ter-00}{article}{
   author={Furioli, Giulia},
   author={Lemari\'{e}-Rieusset, Pierre G.},
   author={Terraneo, Elide},
   title={Unicit\'{e} dans $L^3(\mathbf{R}^3)$ et d'autres espaces fonctionnels
   limites pour Navier-Stokes},
   language={French, with English and French summaries},
   journal={Rev. Mat. Iberoamericana},
   volume={16},
   date={2000},
   pages={605--667},
}
\bib{Gal-11}{book}{
   author={Galdi, G. P.},
   title={An introduction to the mathematical theory of the Navier-Stokes
   equations},
   series={Springer Monographs in Mathematics},
   edition={2},
   note={Steady-state problems},
   publisher={Springer, New York},
   date={2011},
   pages={xiv+1018},
}
\bib{Gal-13}{article}{
   author={Galdi, Giovanni P.},
   title={Existence and uniqueness of time-periodic solutions to the
   Navier-Stokes equations in the whole plane},
   journal={Discrete Contin. Dyn. Syst. Ser. S},
   volume={6},
   date={2013},
   pages={1237--1257},
}
\bib{Gei-Hie-Ngu-16}{article}{
   author={Geissert, Matthias},
   author={Hieber, Matthias},
   author={Nguyen, Thieu Huy},
   title={A general approach to time periodic incompressible viscous fluid
   flow problems},
   journal={Arch. Ration. Mech. Anal.},
   volume={220},
   date={2016},
   pages={1095--1118},
}
\bib{Kan-Koz-Shi-19}{article}{
   author={Kaneko, Kenta},
   author={Kozono, Hideo},
   author={Shimizu, Senjo},
   title={Stationary solution to the Navier--Stokes equations in the scaling
   invariant Besov space and its regularity},
   journal={Indiana Univ. Math. J.},
   volume={68},
   date={2019},
   pages={857--880},
}
\bib{Koz-Nak-96}{article}{
   author={Kozono, Hideo},
   author={Nakao, Mitsuhiro},
   title={Periodic solutions of the Navier-Stokes equations in unbounded
   domains},
   journal={Tohoku Math. J. (2)},
   volume={48},
   date={1996},
   pages={33--50},
}
\bib{Koz-Oga-Tan-03}{article}{
   author={Kozono, Hideo},
   author={Ogawa, Takayoshi},
   author={Taniuchi, Yasushi},
   title={Navier-Stokes equations in the Besov space near $L^\infty$ and
   BMO},
   journal={Kyushu J. Math.},
   volume={57},
   date={2003},
   pages={303--324},
}
\bib{Mar-91-RM}{article}{
   author={Maremonti, Paolo},
   title={Some theorems of existence for solutions of the Navier-Stokes
   equations with slip boundary conditions in half-space},
   journal={Ricerche Mat.},
   volume={40},
   date={1991},
   pages={81--135},
}
\bib{Mar-91-Non}{article}{
   author={Maremonti, Paolo},
   title={Existence and stability of time-periodic solutions to the
   Navier-Stokes equations in the whole space},
   journal={Nonlinearity},
   volume={4},
   date={1991},
   pages={503--529},
}
\bib{Tan-99}{article}{
   author={Taniuchi, Yasushi},
   title={On stability of periodic solutions of the Navier-Stokes equations
   in unbounded domains},
   journal={Hokkaido Math. J.},
   volume={28},
   date={1999},
   pages={147--173},
}
\bib{Tsu-18}{article}{
   author={Tsuda, Kazuyuki},
   title={Time periodic problem for the compressible Navier-Stokes equation
   on $\mathbf{R}^2$ with antisymmetry},
   journal={J. Math. Soc. Japan},
   volume={70},
   date={2018},
   pages={243--281},
}
\bib{Wan-15}{article}{
   author={Wang, Baoxiang},
   title={Ill-posedness for the Navier-Stokes equations in critical Besov
   spaces $\dot B_{\infty,q}^{-1}$},
   journal={Adv. Math.},
   volume={268},
   date={2015},
   pages={350--372},
}
\bib{Yam-00-FE}{article}{
   author={Yamazaki, Masao},
   title={Solutions in the Morrey spaces of the Navier-Stokes equation with
   time-dependent external force},
   journal={Funkcial. Ekvac.},
   volume={43},
   date={2000},
   pages={419--460},
}
\bib{Yam-00-MA}{article}{
   author={Yamazaki, Masao},
   title={The Navier-Stokes equations in the weak-$L^n$ space with
   time-dependent external force},
   journal={Math. Ann.},
   volume={317},
   date={2000},
   pages={635--675},
}
\bib{Yon-10}{article}{
   author={Yoneda, Tsuyoshi},
   title={Ill-posedness of the 3D-Navier--Stokes equations in a generalized
   Besov space near $\rm BMO^{-1}$},
   journal={J. Funct. Anal.},
   volume={258},
   date={2010},
   pages={3376--3387},
}
\end{biblist}    
\end{bibdiv}

\end{document}